\newtheorem{THM}{Theorem}[section]
\newtheorem{LMA}[THM]{Lemma}
\def\squarebox#1{\hbox to #1{\hfill\vbox to #1{\vfill}}}
\def\qed{\hspace*{\fill}%
        \vbox{\hrule\hbox{\vrule\squarebox{.667em}\vrule}\hrule}\smallskip}
\newenvironment{proof}{\begin{trivlist}%1g263
\item[\hspace{\labelsep}{\em\noindent Proof.~}]}{\qed\end{trivlist}}
\def\squarebox#1{\hbox to #1{\hfill\vbox to #1{\vfill}}}
\def\qed{\hspace*{\fill}%
        \vbox{\hrule\hbox{\vrule\squarebox{.667em}\vrule}\hrule}\smallskip}
\begin{document}

\title{
Connected domination in grid graphs
}

\author{
	Masahisa Goto,
	Koji M. Kobayashi %\\%$^{2}$
	\\  
	{\footnotesize 
		The University of Tokyo, 
		3-8-1 Komaba, Meguro-ku, Tokyo 113-8902 JAPAN
	}
%	kojikoba@mi.u-tokyo.ac.jp, 
%	+81-3-5454-6828\\
}

\date{}

\maketitle

\pagestyle{plain}
\thispagestyle{plain}

\begin{abstract}
	\ifnum \count10 > 0
	\fi
	\ifnum \count11 > 0
	%
	%\com{%%%%%}
	%
	Given an undirected simple graph, 
	a subset of the vertices of the graph is a {\em dominating set} 
	if every vertex not in the subset is adjacent to at least one vertex in the subset. 
	A subset of the vertices of the graph is a {\em connected dominating set} 
	if the subset is a dominating set and the subgraph induced by the subset is connected. 
	In this paper, 
	we determine the minimum cardinality of a connected dominating set, 
	called the {\em connected domination number}, of an $m \times n$ grid graph for any $m$ and $n$. 
	\fi
\end{abstract}

%\newpage

\section{Introduction} \label{sec:intro}
\ifnum \count10 > 0
\fi
\ifnum \count11 > 0
%
%\com{%%%%%}
%
Given an undirected simple graph, 
a subset of the vertices of the graph is a {\em dominating set} 
if every vertex not in the subset is adjacent to at least one vertex in the subset. 
By imposing restrictions on dominating sets, 
various variants are proposed. 
One of the main variants is a {\em connected dominating set} ({\em CDS}). 
A subset of the vertices of the graph is a {\em connected dominating set} 
if the subset is a dominating set and the subgraph induced by the subset is connected. 
Given an undirected simple graph, 
the problem of computing a minimum CDS of the graph is called the {\em connected dominating set} (CDS) problem, 
which was shown to be NP-hard~\cite{GJ1979}. 
All the vertices composing a CDS are connected and the other vertices are adjacent to vertices in the CDS. 
Computing a CDS is equivalent to computing some spanning tree 
by regarding vertices in the CDS (not in the CDS, respectively) as non-leaves (leaves, respectively). 
Namely, 
the problem of computing a spanning tree with the maximum number of leaves is called the {\em maximum laves spanning tree} ({\em MLST}) problem, 
which is equivalent to the CDS problem \cite{HL1984}. 
The CDS and MLST problems have been extensively studied for various classes of graphs. 
A few results on CDSs in grid graphs with restricted size have been shown in \cite{TF2003,LG2010,SN2021},
but a minimum CDS ({\em MCDS}) and the cardinality of an MCDS (called, the {\em connected domination number}) of any grid graph remain open and furthermore, no conjecture about the numbers of grid graphs has been posed.
\fi
\ifnum \count10 > 0
\fi
\ifnum \count11 > 0
%
%\com{%%%%%}
%
\noindent
\textbf{Previous Results and Our Results.}~~~
%
%%the residue of A modulo B. 
%
Fujie~\cite{TF2003} proposed an integer linear programming approach to obtain a spanning tree with the maximum number of leaves for any graph. 
For $m \times n$ grid graphs such that $m, n \leq 9$, 
he showed computationally the maximum number of leaves (i.e., the connected domination number) using this approach. 
Let $\gamma_{m,n}$ denote the connected domination number for an $m \times n$ grid graph. 
Whether the following results are on the CDS problem or on the MLST problem, 
we describe them using the connected domination number. 
Fujie~\cite{TF2003} showed that for any $m,n$, 
$\gamma_{m,n} \geq mn - \lfloor \frac{2mn}{3} \rfloor$. 
Also, 
for any $m,n \geq 4$, 
he proved 
$\gamma_{m,n} \leq \min \{ 2m + n-4 + \lfloor \frac{n-4}{3} \rfloor (m-2), 2n + m-4 + \lfloor \frac{m-4}{3} \rfloor (n-2) \}$. 
Lie and Toulouse~\cite{LG2010} showed the following results: 
\[		
	\gamma_{2, n} 
		=
		\begin{cases}
			2 & n = 2 \\
			2 & n = 3 \\
			n & n \geq 4
		\end{cases}. 
\]
For any $n \geq 3$, 
$
	\gamma_{3, n} = n
$. 
For any $n \geq 4$, 
$%\[
	\gamma_{4, n} = 2n - \lfloor \frac{n}{3} \rfloor
$. 
For any $n \geq 5$, 
\[		
	\gamma_{5, n} \leq
		\begin{cases}
			2n   & (n \mod 3) = 0 \\
			2n+1 & \mbox{otherwise}
		\end{cases}. 
\]
For any $n \geq 6$, 
$
	\gamma_{4, n} = 2n + 2
$. 
For any $q \geq 2, n \geq 3q$, 
$	
	\gamma_{3q, n} \leq qn + 2q - 2
$. 
For any $q \geq 2, n \geq 3q+1$, 
$
	\gamma_{3q+1, n} \leq qn + n + 2q - 3 - \lfloor \frac{n}{3} \rfloor
$. 
For any $q \geq 2, n \geq 3q+2$, 
$
	\gamma_{3q+2, n} \leq qn + n + 2q - 3 + i
$, 
in which 
\[
	i = 
		\begin{cases}
			0 & (i \mod 3) = 0 \\
			1 & \mbox{otherwise}
		\end{cases}. 
\]
Srinivasan and Narayanaswamy~\cite{SN2021} showed for any $m, n$, 
\[		
	\gamma_{m, n} \geq \lceil \frac{mn + \frac{ 2 \min \{ m, n \} }{3} }{3} \rceil. 
\]
In this paper, 
we determine the connected domination number of any grid graph. 
Specifically, we show the following theorem: 
\fi
\begin{THM} \label{THM:cdn}
	\ifnum \count10 > 0
	\fi
	\ifnum \count11 > 0
	%
	%\com{%%%%%}
	%
	For any $m \geq 4, n \geq 4$, 
	\[
		\gamma_{m, n} = \frac{mn - {a'}_{m,n} }{3} + \bar{r}'_{m,n} + {c'}_{m,n}, 
	\]
	in which 
	\[
		{a'}_{m,n} = (m \mod 3) \cdot (n \mod 3), 
	\]
	\[
			\bar{r}'_{m,n} = 
				\begin{cases}
					3 & \hspace{10mm} (m \mod 3) \cdot (n \mod 3) = 4 \\
					2 & \hspace{10mm} (m \mod 3) \cdot (n \mod 3) = 2 \\
					1 & \hspace{10mm} (m \mod 3) \cdot (n \mod 3) = 1 \\
					0 & \hspace{10mm} (m \mod 3) \cdot (n \mod 3) = 0
				\end{cases}, 
	\]
	\[
		{c'}_{m,n} = 
			\begin{cases}
				\min\{ \frac{m}{3}, \frac{n}{3} \} & (m \mod 3) = 0   \mbox{ and } (n \mod 3) = 0 \\
				\frac{m}{3} & (m \mod 3) = 0   \mbox{ and } (n \mod 3) \ne 0 \\
				\frac{n}{3} & (m \mod 3) \ne 0 \mbox{ and } (n \mod 3) = 0 \\
				\lfloor \frac{m}{3} \rfloor + \lfloor \frac{n}{3} \rfloor - 1 & (m \mod 3) \ne 0 \mbox{ and } (n \mod 3) \ne 0 \\
			\end{cases}. 
	\]
	\fi
\end{THM}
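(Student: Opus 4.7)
\begin{proofsketch}
The proof proceeds by establishing matching upper and lower bounds for $\gamma_{m,n}$, with a case analysis driven by the six residue classes of $(m \bmod 3,\,n \bmod 3)$.

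For the upper bound I construct an explicit CDS of the claimed size. The basic template takes as ``spines'' all vertices in the rows of index $\equiv 2 \pmod{3}$, and joins consecutive spines by a pair of vertical bridge vertices placed in a common column $c$. The naive version uses $\lfloor m/3\rfloor \cdot n + 2(\lfloor m/3\rfloor - 1)$ vertices and slightly overshoots the target; to match it I fix the bridge column $c$ and simultaneously delete, from each spine other than the first and last, the endpoint opposite to $c$. Each such deletion is legal because the deleted row-endpoint is still dominated through the bridge vertex one row above or below, and connectivity is preserved by the bridge itself; this saves exactly the right number of vertices to reach $\gamma_{m,n}$. Choosing the shorter of $m,n$ as the spine direction explains the $\min$ in $c'_{m,n}$ when $3 \mid m$ and $3 \mid n$, and small case-specific patches of the residual rows or columns when $m$ or $n$ is not divisible by $3$ handle the terms $\bar{r}'_{m,n}$ and $a'_{m,n}$.

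For the lower bound I combine a classical double-counting argument with a strip decomposition. For any CDS $D$, connectivity of $G[D]$ gives $\sum_{v \in D}\deg_D(v) \geq 2(|D|-1)$, and the fact that $D$ dominates $\bar{D}$ gives $\sum_{v \in D}\deg_{\bar{D}}(v) \geq mn - |D|$; summing and using $\deg_G(v) \leq 4$ yields the baseline $3|D| \geq mn - 2 + b(D)$, where $b(D) := \sum_{v \in D}(4 - \deg_G(v))$. This already recovers the leading $mn/3$ term, but is insufficient for the additional $c'_{m,n}$; for that I partition the rows of the grid into $\lfloor m/3\rfloor$ horizontal 3-row strips (plus a residual strip of height $m \bmod 3$ when nonzero), and establish a local lemma: whenever the grid has at least $6$ rows, every such 3-row strip contains at least $n+1$ vertices of $D$. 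Summing over the $\lfloor m/3\rfloor$ strips yields $|D| \geq \lfloor m/3\rfloor(n+1)$, whose leading behaviour matches $\gamma_{m,n}$; the symmetric column-strip bound together with the residual-strip analysis then produces the full $c'_{m,n}$, $\bar{r}'_{m,n}$, and $a'_{m,n}$ corrections across all six residue classes.

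The main obstacle will be the strip lemma: showing that every 3-row strip of a grid with $m \geq 6$ contains at least $n+1$ vertices of $D$. This cannot be proved purely locally, since an isolated $3 \times n$ grid has $\gamma_{3,n} = n$; the extra vertex must be extracted from the global connectivity requirement on $D$. My plan is to proceed by contradiction, assuming a strip $S$ on rows $r, r+1, r+2$ with $|D \cap S| \leq n$. Because $D$ must be connected across both boundaries of $S$ to the rest of the grid, each of the sets $D \cap \text{row }r$ and $D \cap \text{row }r+2$ must be nonempty (and in fact be paired with a vertex of $D$ in the adjacent strip). Combining this with the requirement that $D$ dominates the middle row $r+1$, whose neighbours all lie inside $S$, gives a lower bound on $|D \cap S|$ via a case analysis on the distribution of $D \cap S$ across the three rows: the subcases ``middle row is dense'' and ``middle row is sparse'' each force $|D \cap S| \geq n+1$, contradicting the assumption. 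The two end strips and the residual strip require small exchange-type modifications to this core argument to pin down the $\bar{r}'_{m,n}$ and $a'_{m,n}$ terms, but once the strip lemma is in place, matching the upper bound across all six residue classes is a careful but routine bookkeeping.
\end{proofsketch}
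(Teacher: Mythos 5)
Your upper bound sketch is broadly in the spirit of the paper's Lemma~\ref{LMA:UB} (explicit spine-and-bridge instances, one per residue class), and your baseline inequality $3|D| \ge mn - 2 + b(D)$ is a correct standard double count. The lower bound, however, has a genuine gap: the strip lemma is false as stated. Take $m=9$, $n=6$. The theorem gives $\gamma_{9,6} = \frac{54}{3} + \min\{3,2\} = 20$, and the paper's instance $S_{*,0}$ realizes this with $20$ vertices. If every one of the three $3\times 6$ row strips contained at least $n+1=7$ vertices of an MCDS, we would get $|D| \ge 21 > 20$, a contradiction; concretely, in $S_{*,0}$ the strips $x\in\{4,5,6\}$ and $x\in\{7,8,9\}$ each contain exactly $6=n$ vertices. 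The underlying problem is that the ``extra vertex extracted from global connectivity'' does not accrue once per strip in a fixed direction: an optimal CDS routes all of its inter-strip connectivity through a single column, so the total connectivity surcharge is only $\min\{\frac{m}{3},\frac{n}{3}\}$ (or $\lfloor\frac{m}{3}\rfloor+\lfloor\frac{n}{3}\rfloor-1$ in the mixed case), not $\lfloor\frac{m}{3}\rfloor$. Choosing ``the shorter of $m,n$ as the spine direction'' fixes the upper bound but not the lower bound, since a lower bound must hold for \emph{every} MCDS, including ones whose connectivity runs in the other direction; and in the case $(m \bmod 3)\ne 0$, $(n\bmod 3)\ne 0$ the answer involves contributions from \emph{both} directions minus one, which no single-orientation strip sum can produce.

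This is precisely the failure mode the paper flags in its introduction: local pattern/tile counting cannot certify connectivity tightly enough to meet the upper bound. The paper's actual lower bound proof is entirely different in character. It defines a notion of a $(p',q')$-$(p,q)$-regular MCDS, exhibits a regularization routine that converts an arbitrary MCDS into a completely regular one of the same cardinality by swapping only irregular vertices (Lemmas~\ref{LMA:L2}--\ref{LMA:C9} establish feasibility of each step), and then counts the resulting MCDS exactly by classifying its regular vertices into dominators, connectors, and pre-connectors (Lemmas~\ref{LMA:PCR}--\ref{LMA:CR1}), which yields the connector count $c'_{m,n}$ and the correction terms $a'_{m,n}$, $\bar r'_{m,n}$. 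To rescue your approach you would need a globally quantified replacement for the strip lemma — something like ``for every MCDS, either all row strips carry the surcharge or all column strips do'' — and a separate mechanism for the mixed-residue case; as written, the core lemma fails on small divisible-by-three grids and the remaining bookkeeping cannot recover it.
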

\ifnum \count10 > 0
\fi
\ifnum \count11 > 0
%
%\com{%%%%%}
%
\noindent
\textbf{Our Ideas.}~~~
To determine the connected domination number, 
we only have to show the matching upper and lower bounds on it. 
We obtain upper bounds by providing instances of connected dominating sets. 
The typical and advantageous approach to derive lower bounds for this kind of numbers  
is to categorize vertices composing an MCDS into some patterns, 
called blocks or tiles, 
according to their characteristics 
(e.g., the number of their adjacent vertices in the MCDS) 
and show combinations of the patterns and lower bounds on the numbers of such patterns which are necessary to construct the MCDS. 
Using the values of the lower bounds, 
one can obtain lower bounds on the connected domination number. 
Gon{\c{c}}alves et~al.~\cite{GPRT2011} used this kind of approach to their lower bounds 
when they determined minimum dominating sets in grid graphs. 
It is also used in the proofs of lower bounds for CDSs~\cite{SN2021} 
and total dominating sets~\cite{SG2002,NS2010}.  
However, 
it is difficult for one to show that a CDS has the property of being connected
by the numbers of vertices in patterns and combinations of patterns. 
Thus, 
if one shows a lower bound using the above approach, 
it sacrifices the tightness of the lower bound 
and makes it difficult to match an upper bound. 
Then, 
not using such approach but using an algorithmic approach,  
we show the matching upper and lower bounds on the connected domination number 
in Secs.~\ref{sec:up} and \ref{sec:low}, respectively, 
as the proof of Theorem~\ref{THM:cdn}.
For the lower bound proof, 
we introduce a {\em regular} MCDS in Sec.~\ref{sec:regularity}. 
Then, 
a {\em completely} regular MCDS has very characteristic properties and thus, 
we show that it is easy to evaluate a lower bound on the number of vertices composing the MCDS in Sec.~\ref{sec:regularityproperty}.  
Moreover, 
in Sec.~\ref{sec:routinefeasibility}, 
we show that 
any MCDS can be converted algorithmically into a completely regular MCDS according to some routine, 
which is defined in Sec.~\ref{sec:routine}. 
%

%%the residue of A modulo B. 

%
\fi
\ifnum \count10 > 0
\fi
\ifnum \count11 > 0
%
%\com{%%%%%}
%
\noindent
\textbf{Related Results.}~~~
The CDS problem is proved to be equivalent to the MLST problem~\cite{HL1984}. 
Garey and Johnson~\cite{GJ1979} showed that the MLST problem on planar graphs with maximum degree four is NP-hard. 
Research on the CDS problem (MLST problem) has been conducted for various classes of graphs in terms of complexity.
The CDS and MLST problems are shown to be NP-hard for several classes: bipartite graphs~\cite{PLH1983}, cubic graphs~\cite{PL1988} and unit disk graphs~\cite{CCJ1990}. 
On the other hand, 
polynomial-time algorithms were designed for other classes: 
cographs~\cite{CP1984}, distance-hereditary graphs~\cite{DM1988} and cocomparability graphs~\cite{KS1993}
(see the detail in \cite{CF2020}). 
For the problems which are NP-hard for some classes of graphs, 
work on approximation algorithms for them has been done~\cite{GK1998,RDJWLK2004,DGPWWZ2008}. 
For the CDS problem, 
Du et~al.\ \cite{DGPWWZ2008} designed an approximation algorithm with a performance ratio $\epsilon (1 + \ln \Delta)$ for any $\epsilon > 1$, 
in which 
$\Delta$ is the maximum degree of a given graph. 
For any $\epsilon' < 1$, 
Guha and Khuller~\cite{GK1998} showed that there does not exist a polynomial-time approximation with a performance ratio $\epsilon' \ln \Delta$ unless $\mbox{NP} \subseteq \mbox{DTIME}(a^{O(\ln \ln a)})$, 
in which $a$ is the number of vertices. 
Note that approximation algorithms for the CDS problem are not used as those for the MLST problem in general. 
For the MLST problem, 
Lu and Ravi~\cite{LR1998} designed a 3-approximation algorithm and 
Solis-Oba et~al.\ \cite{RS1998,SBL2017} improved it by showing a 2-approximation algorithm. 
There are some approximation algorithms for other classes of graphs: 
bipartite graphs~\cite{FA2007}, directed graphs~\cite{DT2009,DV2010,SSW2012} and cubic graphs~~\cite{LZ2002,CFMW2008,BZ2011}. 
Refer to the comprehensive surveys about the CDS problem (e.g., \cite{DW2013,HHH2020}). 
There is much work on important problems in a grid graph. 
Consecutive studies~\cite{JK1983,TYC1992,CC1993,GPRT2011} had been conducted 
on the dominating set problem in grid graphs for long 
to determine the minimum size of a dominating set. 
Also, 
research on variants of the dominating set problem in a grid graph has been extensively done: 
independent dominating sets~\cite{CO2015}, 
total dominating sets~\cite{SG2002,NS2010} 
and 
power dominating sets~\cite{DH2006}. 
Other work on major problems on a grid graph except for variants of the dominating set problem has been conducted: 
the independent set problem~\cite{CW1998}, 
the hamilton paths problem~\cite{IPS1982}
and 
the shortest path problem~\cite{FH1977}. 
\fi
%

%%%%%%%%%%%%%%%%%%%%%%%%%%%%%%%%%%%%%%%%
\section{Notation and Definitions} \label{sec:notation}
\ifnum \count10 > 0
%

%%%%%%%%%%%%%%%%
%
\fi
\ifnum \count11 > 0
%
%\com{\noindent%%%%%}
%
For any positive integers $m \geq 4$ and $n \geq 4$, 
$G_{m,n} = (V, E)$ is an $m \times n$ grid graph, 
in which 
\[
	V = \{ (x, y) \mid x \in [1, m], y \in [1, n] \}
\]
and 
\[
	E = 
		\{
			\{ (x, y),(x, y+1) \} \mid x \in [1, m], y \in [1, n-1]
		\}
		\cup
		\{
			\{ (x, y),(x+1, y)  \} \mid x \in [1, m-1], y \in [1, n]
		\}. 
\]
%
%%%\[	V = \{(i,j)| 1 \leq i \leq m, 1 \leq j \leq n\}\]
%%%\[	E = \{ \{ (i,j),(i,j+1)\}|1 \leq i \leq m,1 \leq j \leq n-1\}\cup \{\{(i,j),(i+1,j)\}|1 \leq i \leq n-1 , 1 \leq j \leq m \}\]
%
For any vertices $u, v \in V$ such that $\{ u, v \} \in E$, 
we say that $u$ and $v$ are {\em adjacent}. 
For any two vertices $v_{1}, v_{a} \in V$, 
a {\em path} between $v_{1}$ and $v_{a}$ is a vertex sequence $v_{1} v_{2} \cdots v_{a}$ 
such that for any $i = 1, \ldots, a-1$, $v_{i}$ and $v_{i+1}$ are adjacent. 
For any path $v_{1} v_{2} \cdots v_{a}$ 
such that for any $i, j \ne i$, $v_{i} \ne v_{j}$ holds, 
we say that the path is {\em simple}. 
For a vertex set $S \subseteq V$, 
if for any vertex $v \in V$, 
either $v \in S$ or $v$ is adjacent to a vertex in $S$, 
then $S$ is a {\em dominating set} of $G_{m,n}$. 
If a vertex set $S \subseteq V$ is a dominating set and 
the subgraph induced by $S$ is connected, 
namely, for any two vertices $u, v \in S$, 
there exists a path between $u$ and $v$ of only vertices in $S$, 
then 
$S$ is a {\em connected dominating set} ({\em CDS}) of $G_{m,n}$. 
{\em MCDS} denotes a CDS with minimum cardinality. 
For any dominating set $D$ and any vertex $v \in D$, 
we say that $v$ {\em dominates} $v$ itself and vertices adjacent to $v$. 
For any set $S$, 
$|S|$ denotes the number of elements in $S$. 
\fi
%

%
%%%%%%%%%%%%%%%%%%%%%%%%%%%%%%%%%%%%%%%%%%%%%%%%%%%%%%%%%%%%%%%%%%%%%%%%%%%%%%%%
\section{Upper Bounds} \label{sec:up}

\ifnum \count10 > 0
%
%%\com{%%%%%%}
%
%%%%%%%%%%%%%%%%
%

%
\fi
\ifnum \count11 > 0
%
%\com{%%%%%}
%

%
\fi
%

%
%%%LUP
\begin{LMA}\label{LMA:UB}
	\ifnum \count10 > 0
	%
%%%%%%%%%%%%%%%%
	%
	\fi
	\ifnum \count11 > 0
	%
	%\com{%%%%%}
	%
	For any $m \geq 4, n \geq 4$, 
	\[		
		\gamma_{m, n} \leq \frac{mn - {a'}_{m,n} }{3} + \bar{r}'_{m,n}+ {c'}_{m,n},  
	\]
	in which 
	\[
		{a'}_{m,n} = (m \mod 3) \cdot (n \mod 3), 
	\]
	\[
		\bar{r}'_{m,n} = 
			\begin{cases}
					3 & \hspace{10mm} (m \mod 3) \cdot (n \mod 3) = 4 \\
					2 & \hspace{10mm} (m \mod 3) \cdot (n \mod 3) = 2 \\
					1 & \hspace{10mm} (m \mod 3) \cdot (n \mod 3) = 1 \\
					0 & \hspace{10mm} (m \mod 3) \cdot (n \mod 3) = 0
			\end{cases}
	\]
	and 
	\[
	{c'}_{m,n} = 
		\begin{cases}
			\min\{ \frac{m}{3}, \frac{n}{3} \} & (m \mod 3) = 0   \mbox{ and } (n \mod 3) = 0 \\
			\frac{m}{3} & (m \mod 3) = 0   \mbox{ and } (n \mod 3) \ne 0 \\
			\frac{n}{3} & (m \mod 3) \ne 0 \mbox{ and } (n \mod 3) = 0 \\
			\lfloor \frac{m}{3} \rfloor + \lfloor \frac{n}{3} \rfloor - 1 & (m \mod 3) \ne 0 \mbox{ and } (n \mod 3) \ne 0 \\
		\end{cases}. 
	\]
	\fi
\end{LMA}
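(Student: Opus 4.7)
The plan is to exhibit, for every pair $(m, n)$ with $m, n \geq 4$, an explicit connected dominating set of $G_{m,n}$ of cardinality equal to the claimed right-hand side; the bound is then immediate. Since the right-hand side depends on $(m, n)$ only through the residues $(m \bmod 3, n \bmod 3)$, I would organize the proof as a case analysis on these nine residue pairs.

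The uniform template is a \emph{comb}: a full row or column serving as the \emph{spine}, with short orthogonal \emph{ribs} placed every three units along the spine. In the cleanest case $m \equiv n \equiv 0 \pmod 3$, assume without loss of generality $m \leq n$; take the spine to be a full column (say column $2$) of length $m$, and place $m/3$ horizontal ribs, one every three rows, each running from column $3$ out to column $n$. The spine dominates the three leftmost columns, each rib dominates the three rows around it in the remaining $n-2$ columns, and every rib meets the spine at its leftmost vertex, so the resulting set is a connected dominating set. Counting yields $m + (m/3)(n-2) = mn/3 + m/3 = mn/3 + \min(m/3, n/3)$, matching the bound (here $a'_{m,n} = \bar r'_{m,n} = 0$).

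For the remaining residue classes I would keep the comb template and modulate only the spine orientation (always chosen along the dimension in which $c'_{m,n}$ is counted), together with the rib positions and lengths. When exactly one of $m, n$ is divisible by $3$ the ribs still tile the grid cleanly, with only their length changed, and the analogous count matches the bound. When neither $m$ nor $n$ is divisible by $3$, the periodic ribs no longer reach the far corner and leave an $r_m \times r_n$ uncovered block opposite the spine, where $r_i := i \bmod 3 \in \{1, 2\}$; this is repaired by adjoining $\bar r'_{m,n}$ extra vertices inside the corner, placed so that the $a'_{m,n} = r_m r_n$ leftover cells are dominated and the patch connects back through the last rib, which accounts for the adjustment $c'_{m,n} = \lfloor m/3 \rfloor + \lfloor n/3 \rfloor - 1$.

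The main obstacle is not any single clever step but the uniform bookkeeping across the nine residue classes: in each one must simultaneously verify domination, connectedness, and an \emph{exact} count. The most delicate subcase is $(m \bmod 3, n \bmod 3) = (2, 2)$, where $\bar r'_{m,n} = 3$ and the corner augmentation must be chosen carefully so that no extra vertices creep in beyond the minimum forced by the leftover $2 \times 2$ block; the other residue classes are variations on the same theme and follow by similar inspection.
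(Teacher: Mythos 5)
Your overall strategy is the same as the paper's: exhibit an explicit comb-shaped CDS for each residue class of $(m \bmod 3,\, n\bmod 3)$ and count it. Your handling of the five classes in which at least one of $m,n$ is divisible by $3$ is correct and matches the paper's sets $A\cup B\cup C$ (a spine of length $m$ with $m/3$ full-length ribs spaced three apart, giving $m+\frac{m}{3}(n-2)=\frac{(n+1)m}{3}$, and the $\min$ obtained by choosing the better orientation).

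The gap is in the four classes with $(m\bmod 3)\neq 0$ and $(n\bmod 3)\neq 0$, and it is a gap of substance, not bookkeeping. A single comb whose ribs run the full length of the perpendicular dimension does not leave an $r_m\times r_n$ corner block uncovered; it leaves an entire boundary \emph{strip} of size roughly $r_m\times(n-3)$ (or $(m-3)\times r_n$), because the rib spacing only fails in the spine direction. No patch of $\bar r'_{m,n}\le 3$ vertices can dominate such a strip, and the arithmetic confirms the mismatch: a spine with $k$ ribs costs $mn/3$ plus about $k$ extra vertices, so a one-direction comb can only account for an excess of $\lfloor m/3\rfloor$ \emph{or} $\lfloor n/3\rfloor$, never the required $c'_{m,n}=\lfloor m/3\rfloor+\lfloor n/3\rfloor-1$. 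The missing idea is a \emph{two-level} comb: the primary ribs stop $r_n+1$ columns short of the far side, the last primary rib then serves as the spine of a secondary comb of roughly $\lfloor m/3\rfloor-1$ short ribs of length $r_n+1$ that cover the leftover strip, and only \emph{after} that does an $O(1)$ corner patch of $\bar r'_{m,n}$ vertices finish the job. This is precisely the role of the sets $D_{i,j}$ and $E_{i,j}$ in the paper's proof (e.g.\ $D_{1,1}=\{(x,n-1),(x,n)\mid x=5,\ldots,m-2\}$ and $E_{1,1}=\{(m,n-1)\}$), and it is where the summand $\lfloor n/3\rfloor-1$ in $c'_{m,n}$ actually comes from. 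As written, your construction in these four cases either leaves $\Theta(\min(m,n))$ vertices undominated or overshoots the claimed cardinality by $\Theta(\min(m,n))$, so the argument does not close without this second layer.
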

\begin{proof}
	\ifnum \count10 > 0
	\fi
	\ifnum \count11 > 0
	%
	%\com{%%%%%}
	%
	We show an upper bound on the connected domination number by presenting a CDS instance for each $m \geq 4$ and $n \geq 4$. 
	First, 
	we consider the case of any $n$ and $(m \mod 3) = 0$. 
	We consider the vertex set 
	$S_{0,*} = A \cup B \cup C_{0,*}$, 
	in which 
	$A = \{ (1, 2), (2, 2), \cdots, (m, 2) \}$, 
	$B = \{ (2, 3), (2, 4), \cdots, (2, n) \}$, and 
	$C_{0,*} = \{ (x, 3), \cdots, (x, n) \mid x = 5, \ldots, m-1\}$
	(see Fig.~\ref{fig:up}).
	Vertices in $A$ ($B$ and $C_{0,*}$, respectively) dominate vertices in $\{ (1, 1), \cdots, (m, 1) \}$ ($\{ (1, 3), \cdots, (1, n) \}$ and $\{ (x-1, 3), \cdots, (x-1, n), (x+1, 3), \cdots, (x+1, n) \mid x = 5, \ldots, m-1 \}$, respectively). 
	Also, 
	since vertices in $S_{0,*}$ are connected as we can see from Fig.~\ref{fig:up}, 
	$S_{0,*}$ is a CDS. 
	Since $|A| = m$, 
	$|B| = n-2$, and 
	$|C_{0,*}| = (n-2)(\frac{(m-1) - 5}{3} + 1) = \frac{(n-2)(m-3)}{3}$, 
	we have 
	\begin{equation} \label{LMA:UB:eq.1}
		|S_{0,*}| = m + n-2 + \frac{(n-2)(m-3)}{3} = \frac{(n+1)m}{3}. 
	\end{equation}
	Next in the case of any $m$ and $(n \mod 3) = 0$, 
	we consider the vertex set 
	$S_{*,0} = A \cup B \cup C_{*,0}$, 
	in which 
	$C_{*,0} = \{ (3, y), \cdots, (m, y) \mid y = 5, \ldots, n-1 \}$. 
	Vertices in $C_{*,0}$ dominate vertices in 
	$\{ (3, y-1), \cdots, (m, y-1), (3, y+1), \cdots, (m, y+1) \mid y = 5, \ldots, n-1 \}$. 
	Since vertices in $S_{*,0}$ are connected, 
	$S_{*,0}$ is a CDS. 
	$|A| = m$, 
	$|B| = n-2$ and 
	$|C_{*,0}| = (m-2)(\frac{(n-1) - 5}{3} + 1) = \frac{(m-2)(n-3)}{3}$, 
	and we have 
	\begin{equation} \label{LMA:UB:eq.2}
		|S_{*,0}| = m + n-2 + \frac{(m-2)(n-3)}{3} = \frac{(m+1)n}{3}. 
	\end{equation}
	By the above two instances, 
	we can have the following bounds. 
	\noindent
	{\bf\boldmath $(m \mod 3) = 0$ and $(n \mod 3) = 0$:}
	By Eqs.~(\ref{LMA:UB:eq.1}) and (\ref{LMA:UB:eq.2}), 
	\begin{equation} \label{LMA:UB:eq.3}
		\gamma_{m,n} \leq \min \left \{ \frac{(n+1)m}{3}, \frac{(m+1)n}{3} \right \} 
						= \frac{mn}{3} +  \min \left \{ \frac{m}{3}, \frac{n}{3} \right \} 
						= \frac{mn - {a'}_{m,n} }{3} + \bar{r}'_{m,n} + {c'}_{m,n}. 
	\end{equation}
	Note that 
	\[
		{a'}_{m,n} = (m \mod 3) \cdot (n \mod 3) = 0, 
	\]
	\[
		\bar{r}'_{m,n} = 0
	\]
	and 
	\[
		{c'}_{m,n} = \min \left \{ \frac{m}{3}, \frac{n}{3} \right \}. 
	\]
	\noindent
	{\bf\boldmath $(m \mod 3) = 0$ and $(n \mod 3) \ne 0$:}
	If $(m \mod 3) = 0$ and $(n \mod 3) \ne 0$, 
	it follows from (\ref{LMA:UB:eq.1}) that 
	\begin{equation} \label{LMA:UB:eq.4}
		\gamma_{m,n} \leq \frac{(n+1)m}{3} 
						= \frac{mn - {a'}_{m,n} }{3} + \bar{r}'_{m,n} + {c'}_{m,n}. 
	\end{equation}
	Note that 
	\[
		{a'}_{m,n} = (m \mod 3) \cdot (n \mod 3) = 0, 
	\]
	\[
		\bar{r}'_{m,n} = 0
	\]
	and 
	\[
		{c'}_{m,n} = \frac{m}{3}. 
	\]
	\noindent
	{\bf\boldmath $(m \mod 3) \ne 0$ and $(n \mod 3) = 0$:}
	By Eq.(\ref{LMA:UB:eq.2}), 
	\begin{equation} \label{LMA:UB:eq.5}
		\gamma_{m,n} \leq \frac{(m+1)n}{3} 
						= \frac{mn - {a'}_{m,n} }{3} + \bar{r}'_{m,n} + {c'}_{m,n}. 
	\end{equation}
	Note that 
	\[
		{a'}_{m,n} = (m \mod 3) \cdot (n \mod 3) = 0, 
	\]
	\[
		\bar{r}'_{m,n} = 0
	\]
	and 
	\[
		{c'}_{m,n} = \frac{n}{3}. 
	\]
	\noindent
	{\bf\boldmath $(m \mod 3) = 1$ and $(n \mod 3) = 1$:}
	We consider the vertex set 
	$S_{1,1} = A \cup B \cup C_{1,1} \cup D_{1,1} \cup E_{1,1}$, 
	in which 
	$C_{1,1} = \{ (3, y), \cdots, (m, y) \mid y = 5, \ldots, n-2 \}$, 
	$D_{1,1} = \{ (x, n-1), (x, n) \mid x = 5, \ldots, m-2 \}$
	and 
	$E_{1,1} = \{ (m, n-1) \}$. 
	Vertices in $C_{1,1}$ ($D_{1,1}$, respectively)
	dominate 
	vertices in $\{ (3, y-1), \cdots, (m, y-1), (3, y+1), \cdots, (m, y+1) \mid y = 5, \ldots, n-2 \}$
	($\{ (x-1, n-1), (x-1, n), (x+1, n-1), (x+1, n) \mid x = 5, \ldots, m-2 \}$, respectively) 
	and 
	$E_{1,1}$ dominates $(m, n)$. 
	Since $S_{1,1}$ is connected, 
	$S_{1,1}$ is a CDS. 
	$|A| = m$, 
	$|B| = n-2$, 
	$|C_{1,1}| = (m-2)(\frac{(n-2) - 5}{3} + 1)$, %%% (m-2)(\lfloor \frac{n}{3} \rfloor - 1)$, 
	$|D_{1,1}| = 2 (\frac{(m-2) - 5}{3} + 1)$ and %%% 2(\lfloor \frac{m}{3} \rfloor - 1)$ and 
	$|E_{1,1}| = 1$. 
	Thus, 
	we have 
	\begin{eqnarray} \label{LMA:UB:eq.6}
		|S_{1,1}| &=& m + n-2 + (m-2) \left (\frac{(n-2) - 5}{3} + 1 \right ) + 2 \left ( \frac{(m-2) - 5}{3} + 1 \right ) + 1 \nonumber \\
				  %%% &=& m + n-2 + (m-2)(\lfloor \frac{n}{3} \rfloor - 1) + 2(\lfloor \frac{m}{3} \rfloor - 1) + 1 \nonumber \\
				  %%% &=& m + n-2 + (m-2)(\frac{n-1}{3} - 1) + 2(\frac{m-1}{3} - 1) + 1 \\
				  &=& m + n-1 + (m-2)\frac{n-4}{3} + 2 \frac{m-4}{3} \nonumber \\
				  &=& \frac{3m + 3n-3 + mn-4m-2n+8 +2m-8 }{3} 
				   =  \frac{mn + m + n - 3}{3}  \nonumber \\
				  &=& \frac{mn-1}{3} + 1 + \frac{m-1}{3} + \frac{n-1}{3} %\\
				   =  \frac{mn-1}{3} + 1 + \lfloor \frac{m}{3} \rfloor + \lfloor \frac{n}{3} \rfloor - 1 \nonumber \\
				  &=& \frac{mn - {a'}_{m,n} }{3} + \bar{r}'_{m,n} + {c'}_{m,n}. 
	\end{eqnarray}
	Note that 
	\[
		{a'}_{m,n} = (m \mod 3) \cdot (n \mod 3) = 1, 
	\]
	\[
		\bar{r}'_{m,n} = 1, 
	\]
	and 
	\[
		{c'}_{m,n} = \lfloor \frac{m}{3} \rfloor + \lfloor \frac{n}{3} \rfloor - 1. 
	\]
	\noindent
	{\bf\boldmath $(m \mod 3) = 1$ and $(n \mod 3) = 2$:}
	We consider the vertex set 
	$S_{1,2} = A \cup B \cup C_{1,2} \cup D_{1,2} \cup E_{1,2}$, 
	in which 
	$C_{1,2} = \{ (3, y), \cdots, (m, y) \mid y = 5, \ldots, n-3 \}$, 
	$D_{1,2} = \{ (x, n-2), (x, n-1), (x, n) \mid x = 5, \ldots, m-2 \}$ 
	and 
	$E_{1,2} = \{ (m, n-2), (m, n-1) \}$. 
	Vertices in $C_{1,2}$ ($D_{1,2}$ and $E_{1,2}$, respectively)
	dominate 
	vertices in $\{ (3, y-1), \cdots, (m, y-1), (3, y+1), \cdots, (m, y+1) \mid y = 5, \ldots, n-3 \}$
	($\{ (x-1, n-2), (x-1, n-1), (x-1, n), (x+1, n-2), (x+1, n-1), (x+1, n) \mid x = 5, \ldots, m-2 \}$ and $\{ (m, n-1), (m, n) \}$, respectively). 
	Since $S_{1,2}$ is connected, 
	$S_{1,2}$ is a CDS. 
	$|C_{1,2}| = (m-2)(\frac{(n-3) - 5}{3} + 1)$, %%% (m-2)(\lfloor \frac{n}{3} \rfloor - 1)$, 
	$|D_{1,2}| = 3 (\frac{(m-2) - 5}{3} + 1)$ and %%% 3(\lfloor \frac{m}{3} \rfloor - 1)$ and 
	$|E_{1,2}| = 2$. 
	Hence, 
	we have 
	\begin{eqnarray} \label{LMA:UB:eq.7}
		|S_{1,2}| &=& m + n-2 + (m-2) \left (\frac{(n-3) - 5}{3} + 1 \right ) + 3 \left (\frac{(m-2) - 5}{3} + 1 \right ) + 2 \nonumber\\
				  %%% &=& m + n-2 + (m-2)(\lfloor \frac{n}{3} \rfloor - 1) + 3(\lfloor \frac{m}{3} \rfloor - 1) + 2 \\
				  %%% &=& m + n-2 + (m-2)(\frac{n-2}{3} - 1) + 2(\frac{m-1}{3} - 1) + 1 %\\
				  &=& m + n + (m-2) \frac{n-5}{3} + 3 \frac{m-4}{3} \nonumber \\
				  &=& \frac{3m + 3n + mn-5m-2n+10 + 3m-12}{3} %\\
				   =  \frac{mn + m + n - 2}{3} \nonumber\\
				  &=& \frac{mn-2}{3} + 2 + \frac{m-1}{3} + \frac{n-2}{3} - 1 %\\
				   =  \frac{mn-2}{3} + 2 + \lfloor \frac{m}{3} \rfloor + \lfloor \frac{n}{3} \rfloor - 1 \nonumber \\
				  &=& \frac{mn - {a'}_{m,n} }{3} + \bar{r}'_{m,n} + {c'}_{m,n}. 
	\end{eqnarray}
	Note that 
	\[
		{a'}_{m,n} = (m \mod 3) \cdot (n \mod 3) = 2, 
	\]
	\[
		\bar{r}'_{m,n} = 2
	\]
	and 
	\[
		{c'}_{m,n} = \lfloor \frac{m}{3} \rfloor + \lfloor \frac{n}{3} \rfloor - 1. 
	\]
	\noindent
	{\bf\boldmath $(m \mod 3) = 2$ and $(n \mod 3) = 1$:}
	We consider the vertex set $S_{2,1} = A \cup B \cup C_{2,1} \cup D_{2,1} \cup E_{2,1}$, 
	in which 
	$C_{2,1} = \{ (3, y), \cdots, (m, y) \mid y = 5, \ldots, n-2 \}$, 
	$D_{2,1} = \{ (x, n-1), (x, n) \mid x = 5, \ldots, m-3 \}$
	and 
	$E_{2,1} = \{ (m, n-1), (m, n) \}$. 
	Vertices in $C_{2,1}$ ($D_{2,1}$ and $E_{2,1}$, respectively)
	dominate 
	$\{ (3, y-1), \cdots, (m, y-1), (3, y+1), \cdots, (m, y+1) \mid y = 5, \ldots, n-2 \}$
	($\{ (x-1, n-1), (x-1, n), (x+1, n-1), (x+1, n) \mid x = 5, \ldots, m-3 \}$ and $\{ (m-1, n), (m, n) \}$, respectively). 
	Since $S_{2,1}$ is connected, 
	$S_{2,1}$ is a CDS. 
	$|A| = m$, 
	$|B| = n-2$, 
	$|C_{2,1}| = (m-2)(\frac{(n-2) - 5}{3} + 1)$, %%% (m-2)(\lfloor \frac{n}{3} \rfloor - 1)$, 
	$|D_{2,1}| = 2 (\frac{(m-3) - 5}{3} + 1)$ and %%% 2(\lfloor \frac{m}{3} \rfloor - 1)$ and 
	$|E_{2,1}| = 2$. 
	Thus, 
	we have 
	\begin{eqnarray} \label{LMA:UB:eq.8}
		|S_{1,2}| &=& m + n-2 + (m-2) \left (\frac{(n-2) - 5}{3} + 1 \right ) + 2 \left (\frac{(m-3) - 5}{3} + 1 \right ) + 2 \nonumber\\
				  %%% &=& m + n-2 + (m-2)(\lfloor \frac{n}{3} \rfloor - 1) + 2(\lfloor \frac{m}{3} \rfloor - 1) + 2 \\
				  %%% &=& m + n-2 + (m-2)(\frac{n-1}{3} - 1) + 2(\frac{m-2}{3} - 1) + 1 \\
				  &=& m + n + (m-2) \frac{n-4}{3} + 2 \frac{m-5}{3} \nonumber\\
				  &=& \frac{3m + 3n + mn-4m-2n+8 + 2m-10}{3} %\\
				   =  \frac{mn + m + n - 2}{3} \nonumber\\
				  &=& \frac{mn-2}{3} + 2 + \frac{m-2}{3} + \frac{n-1}{3} - 1 %\\
				   =  \frac{mn-2}{3} + 2 + \lfloor \frac{m}{3} \rfloor + \lfloor \frac{n}{3} \rfloor - 1 \nonumber\\
				  &=& \frac{mn - {a'}_{m,n} }{3} + \bar{r}'_{m,n} + {c'}_{m,n}. 
	\end{eqnarray}
	Note that 
	\[
		{a'}_{m,n} = (m \mod 3) \cdot (n \mod 3) = 2, 
	\]
	\[
		\bar{r}'_{m,n} = 2
	\]
	and 
	\[
		{c'}_{m,n} = \lfloor \frac{m}{3} \rfloor + \lfloor \frac{n}{3} \rfloor - 1. 
	\]
	\noindent
	{\bf\boldmath $(m \mod 3) = 2$ and $(n \mod 3) = 2$:}
	We consider the vertex set 
	$S_{2,2} = A \cup B \cup C_{2,2} \cup D_{2,2} \cup E_{2,2}$, 
	in which 
	$C_{2,2} = \{ (3, y), \cdots, (m, y) \mid y = 5, \ldots, n-3 \}$, 
	$D_{2,2} = \{ (x, n-2), (x, n-1), (x, n) \mid x = 5, \ldots, m-3 \}$ 
	and 
	$E_{2,2} = \{ (m, n-2), (m, n-1), (m, n) \}$. 
	Vertices in $C_{2,2}$
	($D_{2,2}$ and $E_{2,2}$, respectively)
	dominate
	$\{ (3, y-1), \cdots, (m, y-1), (3, y+1), \cdots, (m, y+1) \mid y = 5, \ldots, n-3 \}$
	($\{ (x-1, n-2), (x-1, n-1), (x-1, n), (x+1, n-2), (x+1, n-1), (x+1, n) \mid x = 5, \ldots, m-2 \}$
	 and $\{ (m-1, n-1), (m-1, n), (m, n-1), (m, n) \}$, respectively). 
	Since $S_{2,2}$ is connected, 
	$S_{2,2}$ is a CDS. 
	$|A| = m$, 
	$|B| = n-2$, 
	$|C_{2,2}| = (m-2)(\frac{(n-3) - 5}{3} + 1)$, %%% (m-2)(\lfloor \frac{n}{3} \rfloor - 1)$, 
	$|D_{2,2}| = 3 (\frac{(m-3) - 5}{3} + 1)$ and %%% 3(\lfloor \frac{m}{3} \rfloor - 1)$ and 
	$|E_{2,2}| = 3$. 
	Hence, 
	we have 
	\begin{eqnarray} \label{LMA:UB:eq.9}
		|S_{2,2}| &=& m + n-2 + (m-2) \left (\frac{(n-3) - 5}{3} + 1 \right ) + 3 \left (\frac{(m-3) - 5}{3} + 1 \right ) + 3 \nonumber\\
				  %%% &=& m + n-2 + (m-2)(\lfloor \frac{n}{3} \rfloor - 1) + 3(\lfloor \frac{m}{3} \rfloor - 1) + 3 \\
				  %%% &=& m + n-2 + (m-2)(\frac{n-2}{3} - 1) + 3(\frac{m-1}{3} - 1) + 3 \\
				  &=& m + n+1 + (m-2) \frac{n-5}{3} + 3 \frac{m-5}{3} \nonumber\\
				  &=& \frac{3m + 3n+3 + mn-5m-2n+10 +3m-15}{3} %\\
				   =  \frac{mn + m + n - 2}{3} \nonumber\\
				  &=& \frac{mn-4}{3} + 3 + \frac{m-2}{3} + \frac{n-2}{3} - 1 %\\
				   =  \frac{mn-4}{3} + 3 + \lfloor \frac{m}{3} \rfloor + \lfloor \frac{n}{3} \rfloor - 1 \nonumber\\
				  &=& \frac{mn - {a'}_{m,n} }{3} + \bar{r}'_{m,n} + {c'}_{m,n}. 
	\end{eqnarray}
	Note that 
	\[
		{a'}_{m,n} = (m \mod 3) \cdot (n \mod 3) = 4, 
	\]
	\[
		\bar{r}'_{m,n} = 3
	\]
	and 
	\[
		{c'}_{m,n} = \lfloor \frac{m}{3} \rfloor + \lfloor \frac{n}{3} \rfloor - 1. 
	\]
	By the above equalities from (\ref{LMA:UB:eq.3}) to (\ref{LMA:UB:eq.9}), 
	for any $m \geq 4, n \geq 4$, 
	we have 
	\[
		\gamma_{m,n} \leq |S_{i, j}|, 
	\]
	in which 
	$(i, j) \in \{ (0, 0), (0, *), (*, 0), (1, 1), (1, 2), (2, 1), (2, 2) \}$. 
	Therefore, 
	we have shown the inequalities in the statement of this lemma. 
	\fi
\end{proof}
\ifnum \count12 > 0
\begin{figure*}
	 \begin{center}
	  \includegraphics[width=\linewidth]{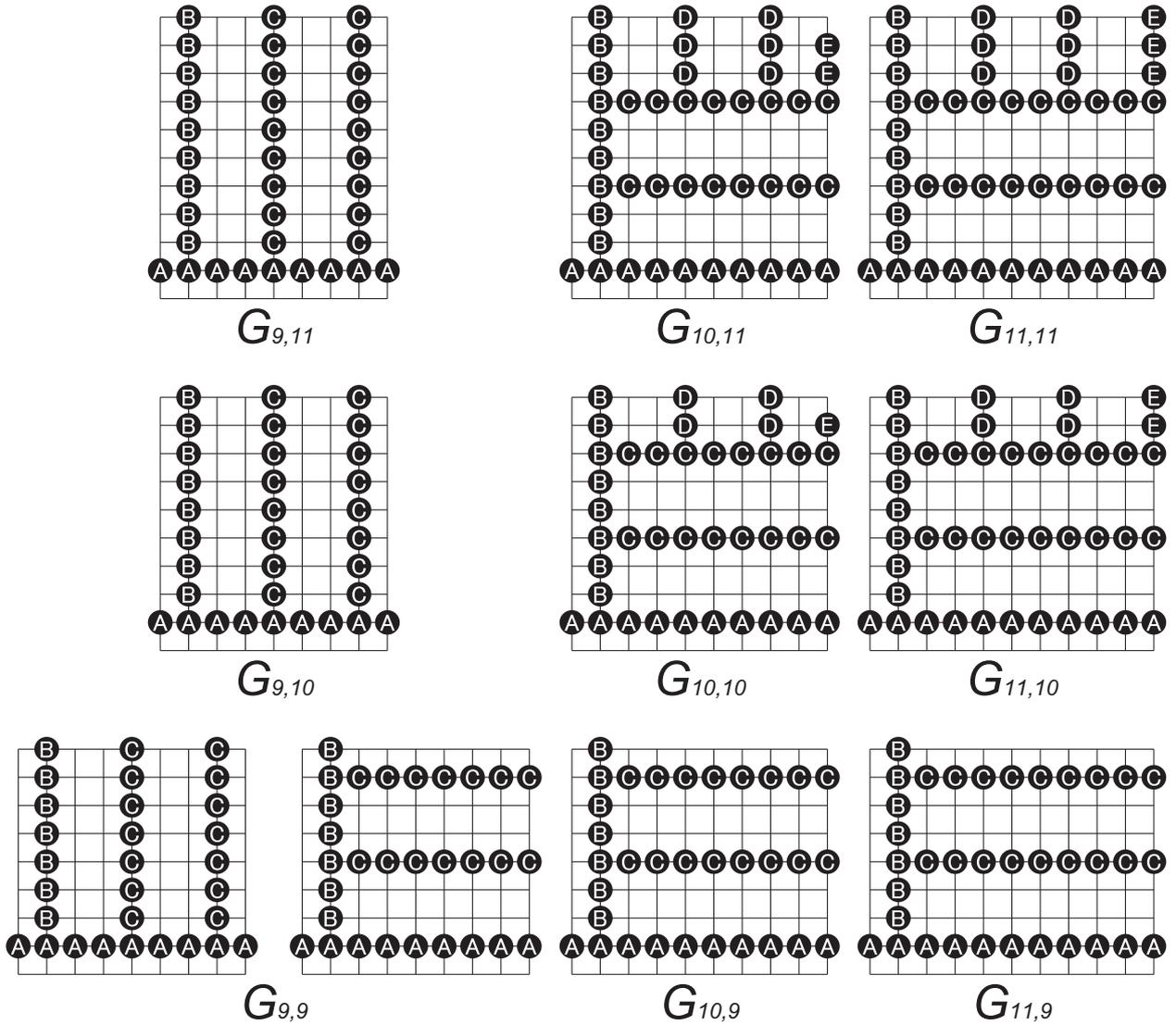}
	 \end{center}
	 \caption{
\ifnum \count10 > 0
%
%%%%%%%%%%%%%%%%
%
\fi
\ifnum \count11 > 0
%
%\com{%%%%%}
%
Upper bound instances of CDSs for any $m, n \in [9, 11]$. 
Black circles denote vertices in the CDSs. 
A vertex labeled with white letter A (B,C,D and E, respectively) belongs to the set $A$ ($B, C_{i,j}, D_{i,j}$ and $E_{i,j} \hspace{1mm} ((i,j) \in \{ (0, 0), (0, *), (*, 0), (1, 1), (1, 2), (2, 1), (2, 2) \}), respectively)$ for each $m,n$. 
\fi
			}
	\label{fig:up}
\end{figure*}
\fi
%

%%%%%%%%%%%%%%%%%%%%%%%%%%%%%%%%%%%%%%%%%%%%%%%%%%%%%%%%%%%%%%%%%%%%%%%%%%%%%%%%
\section{Lower Bounds} \label{sec:low}

%%%\subsection{%%%} \label{sec:regularity}
\subsection{Regularity} \label{sec:regularity}
\ifnum \count10 > 0
%
%%%%%%%%%%%%%%%%
%
\fi
\ifnum \count11 > 0
%
%\com{%%%%%}
%
Let us define an MCDS satisfying key properties 
to show lower bounds on the connected domination number. 
For an MCDS $D$, 
$p \in [1, m]$, 
any $q \in [2, n]$, 
$p' \in [p-1, m]$ and 
$q' \in [q-1, n]$ such that 
either $q' = q$ and $p' \ne p$ or 
$p' = p$ and $q' \ne q$, 
we say that $D$ is {\em \boldmath $(p', q')$-$(p, q)$-regular} 
(Fig.~\ref{fig:regular})
if $D$ satisfies the following conditions: 
When $q' = q$ and $p' \ne p$, 
\begin{itemize}
	\itemsep=-2.0pt
	\setlength{\leftskip}{0pt}
	\item[(Q1)]
		$(p'+1, q), \cdots, (p, q) \in D$, 
	\item[(Q2)]
		if $q \geq 2$ and $p'+1 \geq p-1$, 
		then $(p'+1, q-1), \cdots, (p-1, q-1) \notin D$, 
	\item[(Q3)]
		if $q \geq 2$ and $p = m$, 
		then $(m, q-1) \notin D$, 
	\item[(Q4)]
		if $q \geq 3$, 
		then $(p'+1, q-2), \cdots, (p, q-2) \notin D$, 
	\item[(Q5)]
		if $q \geq 4$, 
		then $(p'+1, q-3), \cdots, (m, q-3) \in D$, 
	\item[(Q6)]
		if $q \geq 4$ and $p' \geq 2$, 
		then $(p', q-3), \cdots, (p', n) \in D$, 
	\item[(Q7)]
		if $q \geq 2$ and $p' \geq 2$, 
		then $(p'-1, q-1), \cdots, (p'-1, n) \notin D$ 
		and 
		if $q \geq 3$ and $p' \geq 2$, 
		then $(p'-1, q-2), \cdots, (p'-1, n) \notin D$, 
	\item[(Q8)]
		if $q \geq 5$, 
		then $(p'+1, q-4), \cdots, (m, q-4) \notin D$ and 
	\item[(Q9)]
		if $q = n-1$, 
		then $(p'+1, n), \cdots, (p-1, n) \notin D$. 
\end{itemize}
When $p' = p$ and $q' \ne q$, 
\begin{itemize}
	\itemsep=-2.0pt
	\setlength{\leftskip}{0pt}
	\item[(P1)]
		$(p, q'+1), \cdots, (p, q) \in D$, 
	\item[(P2)]
		if $p \geq 2$ and $q'+1 \geq q-1$, 
		then $(p-1, q'+1), \cdots, (p-1, q-1) \notin D$, 
	\item[(P3)]
		if $p \geq 2$ and $q = n$, 
		then $(p-1, n) \notin D$ 
	\item[(P4)]
		if $p \geq 3$, 
		then $(p-2, q'+1), \cdots, (p-2, q) \notin D$ 
	\item[(P5)]
		if $p \geq 4$, 
		then $(p-3, q'+1), \cdots, (p-3, m) \in D$ 
	\item[(P6)]
		if $p \geq 4$, 
		then $(p-3, q'), \cdots, (m, q') \in D$, and 
		if $p = 2$, 
		then $(1, q'), \cdots, (m, q') \in D$, 
	\item[(P7)]
		if $p \geq 2$ and $q' \geq 2$, 
		then $(p-1, q'-1), \cdots, (m, q'-1) \notin D$, and 
		if $p \geq 3$ and $q' \geq 2$, 
		then $(p-2, q'-1), \cdots, (m, q'-1) \notin D$, 
	\item[(P8)]
		if $p \geq 5$, 
		then $(p-4, p'+1), \cdots, (p-4, n) \notin D$ and 
	\item[(P9)]
		if $p = m-1$, 
		then $(m, q'+1), \cdots, (m, q-1) \notin D$. 
\end{itemize}
Additionally, 
if $D \subseteq V$ is $(p', q')$-$(p, q)$-regular, 
then we say that vertices in $V$ satisfying the following conditions are {\em \boldmath $(p', q')$-$(p, q)$-regular}: 
When $q' = q$ and $p' \ne p$, 
\begin{itemize}
	\itemsep=-2.0pt
	\setlength{\leftskip}{10pt}
		\item
			$(p'+1, q), \cdots, (p, q)$, 
		\item
			if $q \geq 2$, 
			then $(p'+1, q-1), \cdots, (p-1, q-1)$, 
		\item
			if $q \geq 3$, 
			then $(p'+1, q-2), \cdots, (p, q-2)$, 
		\item
			if $q \geq 4$, 
			then for each $y = 1, \ldots, q-3$, $(p'+1, y), \cdots, (m, y)$ and 
		\item
			if $p' \geq 1$, 
			then for $y = 1, \ldots, n$, $(1, y), \cdots, (p', y)$. 
\end{itemize}
When $p' = p$ and $q' \ne q$, 
\begin{itemize}
	\itemsep=-2.0pt
	\setlength{\leftskip}{10pt}
		\item
			$(p, q'+1), \cdots, (p, q)$, 
		\item
			if $p \geq 2$, 
			then $(p-1, q'+1), \cdots, (p, q-1)$
		\item
			if $p \geq 3$, 
			then $(p-2, q'+1), \cdots, (p, q)$
		\item
			if $p \geq 4$, 
			then for each $x = 1, \ldots, p-3$, $(x, q'+1), \cdots, (x, n)$ and 
		\item
			if $q' \geq 1$, 
			then for each $x = 1, \ldots, m$, $(x, 1), \cdots, (x, q')$. 
\end{itemize}
We will drop ``$(p', q')$-$(p, q)$-'' when it is clear from the context. 
Also, 
${R}(D)$ denotes the set of $(p', q')$-$(p, q)$-regular vertices in $D$. 
We say that vertices in $V$ are not $(p', q')$-$(p, q)$-regular are {\em \boldmath $(p', q')$-$(p, q)$-irregular}. 
We will drop ``$(p', q')$-$(p, q)$-'' when it is clear from the context as well. 
${\overline{R}}(D)$ denotes the set of $(p', q')$-$(p, q)$-irregular vertices in $D$. 
That is, 
${\overline{R}}(D) = D \backslash {R}(D)$. 
\fi
\ifnum \count12 > 0
\begin{figure*}
	 \begin{center}
	  \includegraphics[width=\linewidth]{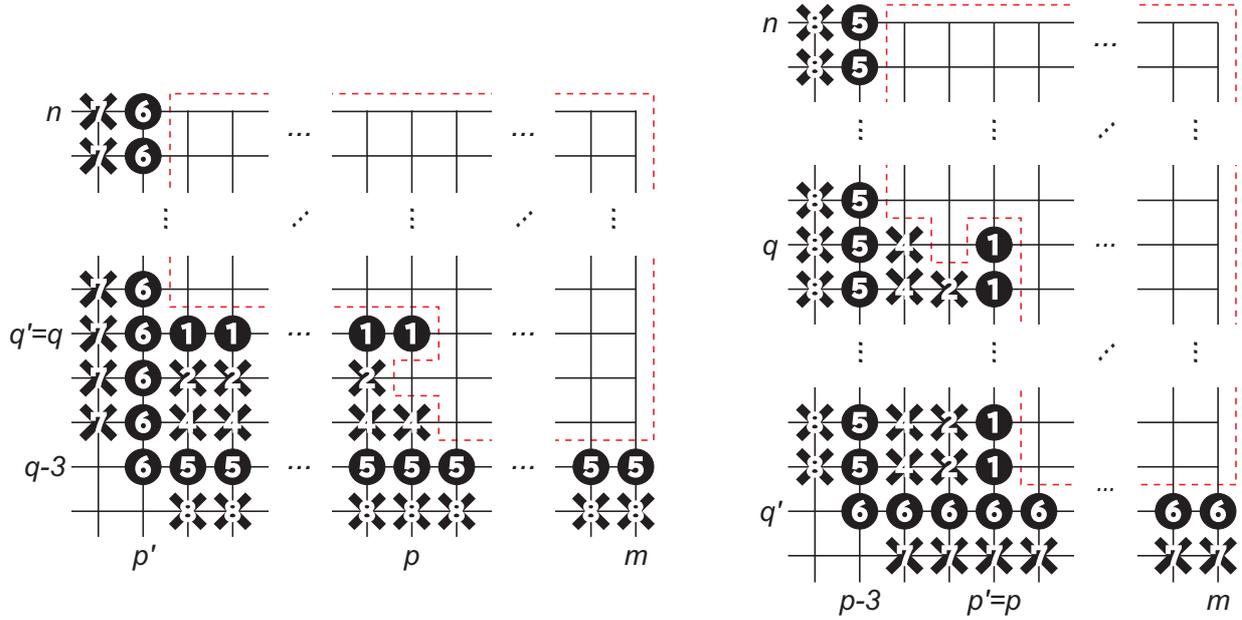}
	 \end{center}
	 \caption{
\ifnum \count10 > 0
%
%%%%%%%%%%%%%%
%
\fi
\ifnum \count11 > 0
%
%\com{%%%%%}
%
Suppose that $D$ is a $(p', q')$-$(p, q)$-regular MCDS. 
The left (right) figure denotes the case in which $q' = q$ and $p' \ne p$ 
($q' \ne q$ and $p' = p$). 
Circles and crosses denote regular vertices in $D$ and not in $D$, respectively. 
Vertices enclosed by dashed lines are irregular, and the others are regular. 
Numbers on circles and crosses represent the numbers of the conditions for the regularity. 
For example, 
``6'' on the vertex $(q-3, p')$ in the left figure means that the vertex satisfies the condition (Q6). 
\fi
			}
	\label{fig:regular}
\end{figure*}
\fi
%

%
%%%%%%%%%%%%%%%%%%%%%%%%%%%%%%%%%%%%%%%%
%%%\subsection{%%%%%%%%} \label{sec:overview}
\subsection{Overview of the Lower Bound Proof} \label{sec:overview}
\ifnum \count10 > 0
%
%%%%%%%%%%%%%%%
\fi
\ifnum \count11 > 0
%
%\com{%%%%%}
%
We evaluate the connected domination number for any $m(\geq 4), n(\geq 4)$ using the regularity. 
We show the first lemma. 
\fi
\begin{LMA}\label{LMA:L1}
	\ifnum \count10 > 0
	%
	%%%%%%%%%%%%%%
	%
	\fi
	\ifnum \count11 > 0
	%
	%\com{%%%%%}
	%
	For any $m(\geq 4)$ and $n(\geq 4)$, 
	suppose that there exists an MCDS without the vertex $(1, 2)$
	of the $m \times n$ grid graph. 
	Then, 
	there exists an MCDS containing $(1, 2)$ of the $n \times m$ grid graph. 
	\fi
\end{LMA}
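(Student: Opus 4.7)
The plan is to exploit the graph isomorphism between $G_{m,n}$ and $G_{n,m}$ given by the transposition $\tau:(x,y)\mapsto(y,x)$. This isomorphism carries any CDS of $G_{m,n}$ bijectively to a CDS of $G_{n,m}$ of the same cardinality, and in particular sends $(2,1)$ to $(1,2)$. Consequently the statement reduces to the following claim: every MCDS $D$ of $G_{m,n}$ with $(1,2)\notin D$ must contain the vertex $(2,1)$.

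To establish this claim I would examine the corner $(1,1)$, whose only neighbors in $G_{m,n}$ are $(1,2)$ and $(2,1)$. Since $D$ is a dominating set, at least one of $\{(1,1),(1,2),(2,1)\}$ lies in $D$, and together with the hypothesis $(1,2)\notin D$ this leaves two cases. If $(2,1)\in D$ the claim holds immediately. Otherwise $(1,1)\in D$ while neither of its two neighbors lies in $D$; because any CDS of a grid with $m,n\geq 4$ has size at least two (a single vertex dominates only itself and up to four further vertices, whereas $mn\geq 16$), the connectivity of the subgraph induced by $D$ forces $(1,1)$ to be adjacent in $D$ to some other vertex, which is impossible. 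Hence this second case is vacuous and $(2,1)\in D$ in every remaining scenario.

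With the claim in hand I would set $D'=\{(y,x):(x,y)\in D\}$ and verify routinely that $D'$ is a CDS of $G_{n,m}$: domination and the induced-subgraph connectivity both transfer along the graph isomorphism $\tau$, and $|D'|=|D|=\gamma_{m,n}=\gamma_{n,m}$ makes $D'$ minimum. Finally $(1,2)=\tau((2,1))\in D'$ by the claim, completing the argument. The main (and essentially only) obstacle is the corner-case inside the proof of the claim, namely ruling out that $(1,1)\in D$ while both of its neighbors sit outside $D$; this is disposed of by the very mild size bound that $m,n\geq 4$ provides.
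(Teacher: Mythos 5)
Your proposal is correct and follows essentially the same route as the paper: both arguments observe that domination of the corner $(1,1)$ together with connectedness forces $(1,2)$ or $(2,1)$ into any MCDS, and then transfer the set to the $n \times m$ grid via the transposition $(x,y)\mapsto(y,x)$. Your treatment is slightly more careful in explicitly ruling out the case where $(1,1)$ itself lies in $D$ with neither neighbor present, a step the paper compresses into the single phrase ``and is connected.''
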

\begin{proof}
	\ifnum \count10 > 0
	%
	%%%%%%%%%%%%%%
	%
	\fi
	\ifnum \count11 > 0
	%
	%\com{%%%%%}
	%
	Any MCDS contains a vertex to dominate the vertex $(1, 1)$ and is connected. 
	Thus, 
	any MCDS contains either $(1, 2)$ or $(2, 1)$. 
	For some $m$ and $n$, 
	let $D$ be an MCDS without $(1, 2)$ of the $m \times n$ grid graph. 
	That is, 
	$D$ contains $(2, 1)$. 
	Now, 
	let $D'$ be the mirror image of $D$. 
	Specifically, 
	$D'$ contains a vertex $(y, x)$ 
	if and only if $D$ contains the vertex $(x, y)$. 
	$D'$ is the MCDS of the $n \times m$ grid graph 
	and contains $(1, 2)$. 
	\fi
\end{proof}
\ifnum \count10 > 0
%
%%%%%%%%%%%%%%
%
\fi
\ifnum \count11 > 0
%
%\com{%%%%%}
%
By symmetry, 
the connected domination number of an $m \times n$ grid graph is equal to that of an $n \times m$ grid graph, that is, $\gamma_{m,n} = \gamma_{n,m}$. 
Hence, 
for all MCDSs containing $(1, 2)$, deriving lower bounds on their elements, we can obtain the lower bounds for all MCDSs. 
Then, 
in what follows, 
we consider only MCDSs containing $(1, 2)$. 
Let us consider an MCDS containing the vertex $(1, 2)$. 
The MCDS is $(0, 2)$-$(1, 2)$-regular by definition. 
Starting from the MCDS, 
we increase regular vertices in the MCDS sequentially according to some routine, 
which is defined later. 
Specifically, 
the routine constructs a new vertex set $D'$ by removing some irregular vertices from an MCDS $D$ and adding vertices to $D$ 
such that the number of added vertices is equal to the removed vertices but the number of regular vertices in $D'$ is larger than that in $D$, and $D'$ is a CDS. 
That is, 
$D'$ is an MCDS with more regular vertices than $D$. 
The routine repeats this construction according to some rules 
until it cannot increase regular vertices, 
and then it is finished.  
Let us denote an MCDS at the time when the routine is finished as $D_{\tau}$, 
which is referred to as the ``completely regular MCDS'' in Sec.~\ref{sec:intro}. 
We give two definitions. 
For a $(p', q')$-$(p, q)$-regular MCDS $D$, $x \in [2, m-1]$ and $y \in [2, n-1]$, 
a {\em connector in $D$} is called a vertex $(x, y) \in {R}(D)$ satisfying one of the following two conditions: 
\begin{itemize}
	\itemsep=-2.0pt
	\setlength{\leftskip}{0pt}
	\item[(c-i)]
		$(x+1, y), (x-1, y), (x-1, y-1), (x-1, y+1) \in {R}(D)$
		and 
		$(x, y-1) \notin {R}(D)$ or 

	\item[(c-ii)]
		$(x, y+1), (x, y-1), (x-1, y-1), (x+1, y-1) \in {R}(D)$
		and 
		$(x-1, y) \notin {R}(D)$. 
\end{itemize}
Also, 
a {\em pre-connector in $D$} is called a vertex $(x, y) \in {R}(D)$ satisfying one of the following two conditions: 
\begin{itemize}
	\itemsep=-2.0pt
	\setlength{\leftskip}{0pt}
	\item[(p-i)]
		$(x-1, y), (x-1, y-1), (x-1, y+1) \in {R}(D)$
		and 
		$(x+1, y)$
		or 

	\item[(p-ii)]
		$(x, y+1), (x, y-1), (x-1, y-1), (x+1, y-1) \in {R}(D)$
		and 
		$(x, y+1)$. 
\end{itemize}
We call a vertex in ${R}(D)$ which is neither a connector nor a pre-connector a {\em dominator in $D$}. 
\fi
\ifnum \count12 > 0
\begin{figure*}
	 \begin{center}
	  \includegraphics[width=\linewidth]{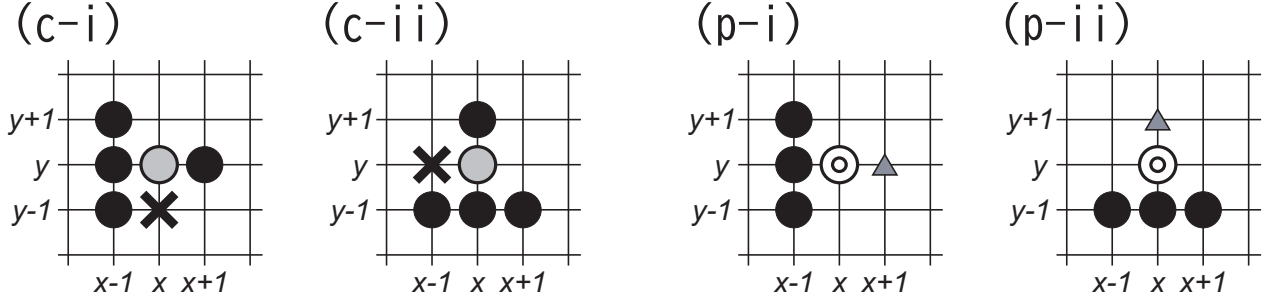}
	 \end{center}
	 \caption{
\ifnum \count10 > 0
%
%%%%%%%%%%%%%%%
%
\fi
\ifnum \count11 > 0
%
%\com{%%%%%}
%
Black circles denote vertices in ${R}(D)$, 
crosses denote vertices not in ${R}(D)$ and 
gray triangles denote irregular vertices. 
Vertices satisfying (c-i) or (c-ii) are connectors, 
denoted by gray circles, and 
vertices satisfying (p-i) or (p-ii) are pre-connectors, 
denoted by double circles. 
\fi
			}
	\label{fig:cr}
\end{figure*}
\fi
\ifnum \count10 > 0
%
%%%%%%%%%%%%%%
%
\fi
\ifnum \count11 > 0
%
%\com{%%%%%}
%
In Lemma~\ref{LMA:PCR}, 
we show that $D_{\tau}$ does not contain any pre-connector. 
In Lemma~\ref{LMA:CR0}, 
we show that for any connector, vertices dominated by the connector are dominated by at least one dominator. 
These two lemmas indicate that 
the number of vertices by regular vertices in $D_{\tau}$ is equal to that of vertices dominated by dominators in $D_{\tau}$. 
Furthermore, 
in Lemma~\ref{LMA:DR}, 
we show that 
dominators dominate $3 {d}_{m,n}$ vertices, 
in which ${d}_{m,n}$ denotes the number of dominators in $D_{\tau}$. 
Since the number of vertices in an $m \times n$ grid graph is $mn$, 
we have 
\begin{equation} \label{EQ:sec:overview.eq1}
	mn = 3 {d}_{m,n} + {a}_{m,n}, 
\end{equation}
in which 
${a}_{m,n}$ denotes the number of vertices which are not dominated by regular vertices in $D_{\tau}$ 
at the time when the routine is finished. 
Let $\bar{r}_{m,n}$ denote the number of irregular vertices in $D_{\tau}$, 
that is, 
$\bar{r}_{m,n} = |{\overline{R}}(D_{\tau})|$. 
Let ${c}_{m,n}$ denote the number of connectors in $D_{\tau}$. 
Since regular vertices in $D_{\tau}$ are connectors or dominators by Lemma~\ref{LMA:PCR},
we have 
\begin{equation} \label{EQ:sec:overview.eq2}
	\gamma_{m,n} = {d}_{m,n} + {c}_{m,n} + \bar{r}_{m,n}. 
\end{equation}
In Lemma~\ref{LMA:IR}, we show that 
\begin{equation} \label{EQ:sec:overview.eq3}
	a_{m,n} = (m \mod 3) \cdot (n \mod 3)
\end{equation}
and 
\begin{equation} \label{EQ:sec:overview.eq4}
	\bar{r}_{m,n} = 
		\begin{cases}
			3 & \hspace{10mm} (m \mod 3) \cdot (n \mod 3) = 4 \\
			2 & \hspace{10mm} (m \mod 3) \cdot (n \mod 3) = 2 \\
			1 & \hspace{10mm} (m \mod 3) \cdot (n \mod 3) = 1 \\
			0 & \hspace{10mm} (m \mod 3) \cdot (n \mod 3) = 0. 
		\end{cases}
\end{equation}
Also, in Lemma~\ref{LMA:CR1}, 
we show that 
\begin{equation} \label{EQ:sec:overview.eq5}
	{c}_{m,n} \geq 
		\begin{cases}
			\min\{ \frac{m}{3}, \frac{n}{3} \} & (m \mod 3) = 0   \mbox{ and } (n \mod 3) = 0 \\
			\frac{m}{3} & (m \mod 3) = 0   \mbox{ and } (n \mod 3) \ne 0 \\
			\frac{n}{3} & (m \mod 3) \ne 0 \mbox{ and } (n \mod 3) = 0 \\
			\lfloor \frac{m}{3} \rfloor + \lfloor \frac{n}{3} \rfloor - 1 & (m \mod 3) \ne 0 \mbox{ and } (n \mod 3) \ne 0. \\
		\end{cases}
\end{equation}
By Eqs~(\ref{EQ:sec:overview.eq1}),(\ref{EQ:sec:overview.eq2}),(\ref{EQ:sec:overview.eq3}),(\ref{EQ:sec:overview.eq4}) and (\ref{EQ:sec:overview.eq5}),
we have the lower bound lemma: 
\fi
\begin{LMA} \label{LMA:low}
	\ifnum \count10 > 0
	%
	%%%%%%%%%%%%%%
	%
	\fi
	\ifnum \count11 > 0
	%
	%\com{%%%%%}
	%
	For any $m \geq 4$ and $n \geq 4$, 
	\[		
		\gamma_{m, n} \geq \frac{mn - {a'}_{m,n} }{3} + \bar{r}'_{m,n} + {c'}_{m,n}, 
	\]
	in which 
	\[
		{a'}_{m,n} = (m \mod 3) \cdot (n \mod 3), 
	\]
	\[
		\bar{r}'_{m,n} = 
			\begin{cases}
					3 & \hspace{10mm} (m \mod 3) \cdot (n \mod 3) = 4 \\
					2 & \hspace{10mm} (m \mod 3) \cdot (n \mod 3) = 2 \\
					1 & \hspace{10mm} (m \mod 3) \cdot (n \mod 3) = 1 \\
					0 & \hspace{10mm} (m \mod 3) \cdot (n \mod 3) = 0
			\end{cases}	
	\]
	and 
	\[
		{c'}_{m,n} = 
			\begin{cases}
				\min\{ \frac{m}{3}, \frac{n}{3} \} & (m \mod 3) = 0   \mbox{ and } (n \mod 3) = 0 \\
				\frac{m}{3} & (m \mod 3) = 0   \mbox{ and } (n \mod 3) \ne 0 \\
				\frac{n}{3} & (m \mod 3) \ne 0 \mbox{ and } (n \mod 3) = 0 \\
				\lfloor \frac{m}{3} \rfloor + \lfloor \frac{n}{3} \rfloor - 1 & (m \mod 3) \ne 0 \mbox{ and } (n \mod 3) \ne 0. \\
			\end{cases}
	\]
	\fi
\end{LMA}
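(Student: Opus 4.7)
The plan is to reduce the problem to analyzing the completely regular MCDS $D_\tau$ produced by the routine of Section \ref{sec:routine}, and then to partition its vertices into dominators, connectors, and irregular vertices whose cardinalities are either exactly computed or bounded below by the three quantities appearing in the statement.

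First, by Lemma \ref{LMA:L1} together with the symmetry $\gamma_{m,n} = \gamma_{n,m}$, I may assume that any fixed MCDS $D$ contains $(1,2)$, so $D$ is $(0,2)$-$(1,2)$-regular. Starting from $D$, I apply the routine to obtain an MCDS $D_\tau$ of the same size at which no further regularity-increasing step is possible; the routine preserves $|D|$ while strictly increasing the number of regular vertices, and since that number is bounded above by $mn$ the process terminates. Then, by Lemma \ref{LMA:PCR}, every regular vertex of $D_\tau$ is a connector or a dominator, so
\[
\gamma_{m,n} \;=\; |D_\tau| \;=\; d_{m,n} + c_{m,n} + \bar{r}_{m,n},
\]
where $d_{m,n}$, $c_{m,n}$, $\bar{r}_{m,n}$ denote the numbers of dominators, connectors, and irregular vertices of $D_\tau$. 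Lemma \ref{LMA:CR0} ensures that every vertex dominated by a connector is also dominated by some dominator, and Lemma \ref{LMA:DR} says that dominators together cover exactly $3 d_{m,n}$ vertices in pairwise disjoint triples, giving
\[
mn \;=\; 3 d_{m,n} + a_{m,n},
\]
where $a_{m,n}$ is the number of vertices not dominated by any regular vertex of $D_\tau$. Eliminating $d_{m,n}$ between the two displays yields
\[
\gamma_{m,n} \;=\; \frac{mn - a_{m,n}}{3} + \bar{r}_{m,n} + c_{m,n}.
\]
Finally, Lemma \ref{LMA:IR} evaluates $a_{m,n} = a'_{m,n}$ and $\bar{r}_{m,n} = \bar{r}'_{m,n}$, and Lemma \ref{LMA:CR1} supplies the inequality $c_{m,n} \geq c'_{m,n}$; substituting into the previous identity delivers the asserted lower bound.

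The substantive difficulty sits entirely upstream of this aggregation step: the routine of Section \ref{sec:routine} and the structural Lemmas \ref{LMA:PCR}--\ref{LMA:CR1} must between them show that any MCDS can be transformed, without increasing cardinality, into a completely regular MCDS whose regular vertices admit the neat connector/dominator dichotomy and whose dominators partition their coverage into exact triples of vertices. Within the present aggregation the only subtlety is that $d_{m,n}$, $c_{m,n}$, $\bar{r}_{m,n}$, and $a_{m,n}$ must all refer to a common witness $D_\tau$, which is precisely what the cardinality-preserving nature of the routine arranges; once that is granted, the lemma reduces to the two-line elimination shown above.
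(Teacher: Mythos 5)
Your proposal matches the paper's own proof of this lemma essentially step for step: it assembles the identities $\gamma_{m,n}=d_{m,n}+c_{m,n}+\bar{r}_{m,n}$ (via Lemma~\ref{LMA:PCR}) and $mn=3d_{m,n}+a_{m,n}$ (via Lemmas~\ref{LMA:CR0} and \ref{LMA:DR}), eliminates $d_{m,n}$, and then substitutes the exact values from Lemma~\ref{LMA:IR} and the bound from Lemma~\ref{LMA:CR1}, exactly as the paper does in Section~\ref{sec:overview}. The aggregation is correct and no further comment is needed.
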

\ifnum \count10 > 0
%
%%%%%%%%%%%%%%%
\fi
\ifnum \count11 > 0
%
%%\com{%%%%%}
%

%
\fi
%

%
%%%%%%%%%%%%%%%%%%%%%%%%%%%%%%%%%%%%%%%%
%%%\subsection{%%%%%%%} \label{sec:routine}
\subsection{Regularization Routine} \label{sec:routine}
\ifnum \count10 > 0
\fi
\ifnum \count11 > 0
%
%\com{\noindent%%%%%}
%
We give some definitions to define the routine. 
Suppose that $D$ is a $(p', q')$-$(p, q)$-regular CDS. 
Also, 
suppose that $D'$ is a CDS such that 
$D' = (D \backslash U) \cup U' = D \backslash U \cup U'$, 
in which 
$U, U' \subseteq {\overline{R}}(D)$, 
and 
$|U| = |U'|$, 
that is, 
all the vertices in $U$ and $U'$ are irregular 
and 
the numbers of vertices in the two sets are equal. 
Then, 
we say that {\em $D$ is regularized into $D'$}.
Furthermore, 
if $D'$ is a $(\hat{p}', \hat{q}')$-$(\hat{p}, \hat{q})$-regular CDS, 
then we say that {\em $D$ is $(\hat{p}', \hat{q}')$-$(\hat{p}, \hat{q})$-regularized into $D'$}. 
Note that if $D$ is an MCDS, then $D'$ is also an MCDS. 
In the routine, 
note that the conditions of Cases~3.2 and 3.3 are not exclusive. 
Specifically, 
if $p' \ne 0$, $q' \leq n-4$ and $p' \leq m-4$, 
then the routine executes only one case of Cases~3.2 and 3.3. 
In the beginning, 
the routine initializes the variable {\tt D} as an MCDS containing $(1, 2)$. 
The routine repeatedly executes Cases~1, 2, 3.1, 3.2 and 3.3 depending on the MCDS of {\tt D} 
and regularizes it 
until the routine executes Case~3.4 and is finished. 
\fi
\ifnum \count10 > 0
\fi
\ifnum \count11 > 0
%
%\com{%%%%%\\}
%
\noindent\vspace{-1mm}\rule{\textwidth}{0.5mm} 
\vspace{-3mm}
{\sc Regularization Routine}\\
\rule{\textwidth}{0.1mm}
	{\bf Initialization:}
	{\tt D} $:=$ a $(0, 2)$-$(1, 2)$-regular MCDS (see Fig. ~\ref{fig:routine}). \\
	Suppose that {\tt D} is $(p', q')$-$(p, q)$-regular. 
	Execute one of the following cases: 
	\\
	{\bf\boldmath Case~1 ($q' = q$ and $p \leq m-1$):}\\
\hspace*{1mm}
		{\tt D} $:=$ an MCDS into which {\tt D} is $(p', q')$-$(p+1, q)$-regularized 
		(guaranteed in Lemma~\ref{LMA:L4}). 
		\\
	{\bf\boldmath Case~2 ($p' = p$ and $q \leq n-1$):}\\
\hspace*{1mm}
		{\tt D} $:=$ an MCDS into which {\tt D} is $(p', q')$-$(p, q+1)$-regularized 
		(guaranteed in Lemma~\ref{LMA:L4}). 
		\\
	{\bf\boldmath Case~3 (either $q'=q$ and $p=m$ or $p'=p$ and $q=n$):}
		Execute one of the four cases: \\
\hspace*{2mm}
			{\bf\boldmath Case~3.1 ($p' = 0$):}\\
\hspace*{3mm}
				{\tt D} $:=$ an MCDS into which {\tt D} is $(2, 2)$-$(2, 3)$-regularized 
				(guaranteed in Lemma~\ref{LMA:L6}). 
				\\
\hspace*{2mm}
			{\bf\boldmath Case~3.2 ($p' \ne 0$, $p' \leq m-2$ and $q' \leq n-4$):}\\
\hspace*{3mm}
				{\tt D} $:=$ an MCDS into which {\tt D} is $(p', q'+3)$-$(p'+1, q'+3)$-regularized 
				(guaranteed in Lemma~\ref{LMA:C9}). 
				\\
\hspace*{2mm}
			{\bf\boldmath Case~3.3 ($p' \ne 0$, $p' \leq m-4$ and $q' \leq n-2$):}\\
\hspace*{3mm}
				{\tt D} $:=$ an MCDS into which {\tt D} is $(p'+3, q')$-$(p'+3, q'+1)$-regularized 
				(guaranteed in Lemma~\ref{LMA:C9}). 
				\\
\hspace*{2mm}
			{\bf\boldmath Case~3.4 
			(either $p' = p = m-1$, 
			$q' = q = n-1$ or 
			both $m-3 \leq p' \leq m-2$ and $n-3 \leq q' \leq n-2$):}\\
\hspace*{3mm}
				Finish. 
				\\
\noindent\vspace{-1mm}\rule{\textwidth}{0.5mm} 
\fi
\ifnum \count12 > 0
\begin{figure*}
	 \begin{center}
	  \includegraphics[width=\linewidth]{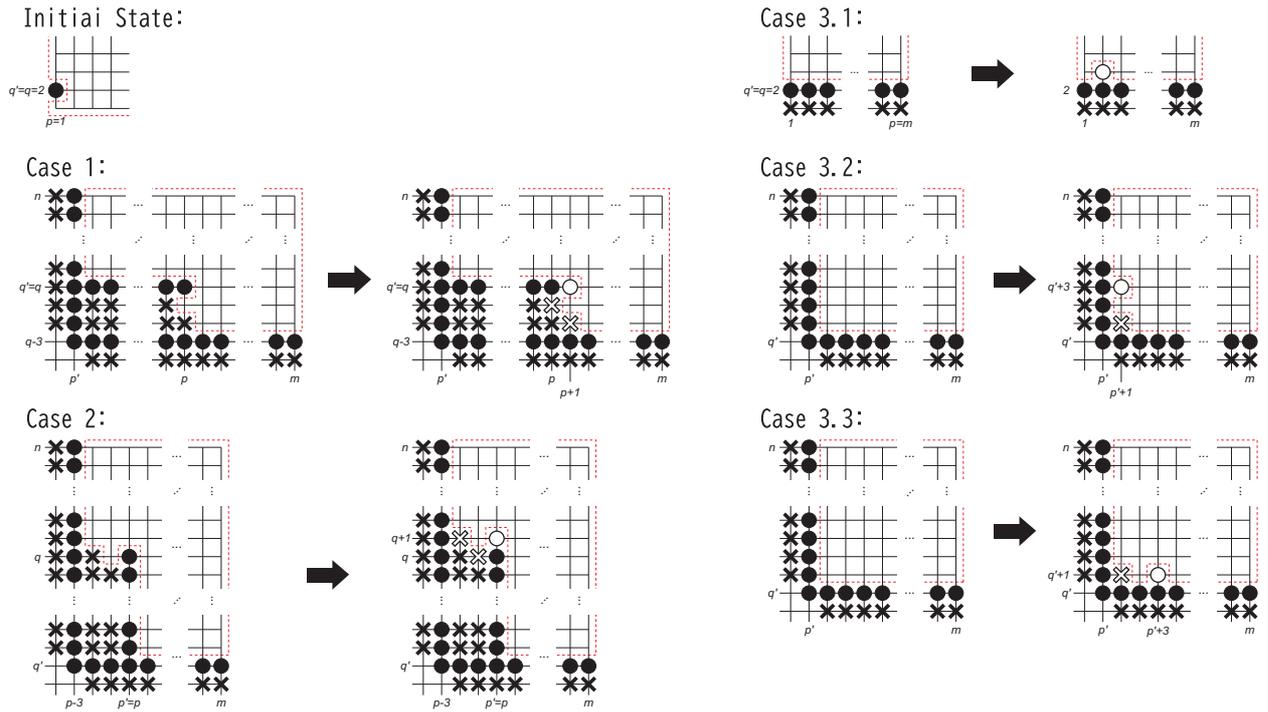}
	 \end{center}
	 \caption{
\ifnum \count10 > 0
\fi
\ifnum \count11 > 0
%
%\com{%%%%%}
%
Regularization routine. 
Black circles and crosses denote regular vertices in {\tt D} and not in {\tt D}, respectively. 
White circles and crosses denote regular vertices which are newly added into {\tt D} and are not added, respectively, when each case is executed. 
Vertices enclosed by dashed lines are irregular and the other vertices are regular. 
For each case,
the left figure on the left (right) of the arrow denotes the situation of {\tt D} immediately before (after) the routine execution. 
\fi
			}
	\label{fig:routine}
\end{figure*}
\fi
\ifnum \count10 > 0
%
%\com{%%%%%%}
%
%%%%%%%%%%%%%%%%
%Case%%%%%%%%%%%
%%%%%%%%%%%%
%

%
\fi
\ifnum \count11 > 0
%
%\com{%%%%%}
%
If each case is feasible, 
then the routine necessarily is finished
because the number of regular vertices is monotone increasing. 
\fi
%

%
%%%%%%%%%%%%%%%%%%%%%%%%%%%%%%%%%%%%%%%%
%%%\subsection{%%%%%%} \label{sec:regularityproperty}
\subsection{Regularity Property} \label{sec:regularityproperty}
\ifnum \count10 > 0
\fi
\ifnum \count11 > 0
%
%\com{%%%%%}
%
In this section, 
we show some properties of MCDSs processed by the routine 
if the routine is feasible. 
First, 
we give some definitions. 
Suppose that the routine is feasible.
Then, 
let $\tau$ be the number of times that the routine is executed by the time that the routine is finished. 
Let $D_{0}$ be an MCDS which is given to the routine at initialization. 
$D_{i}$ denotes the value of the variable {\tt D} in the routine 
immediately after the routine is executed for the $i (= 1, 2, \ldots, \tau)$-th time. 
Let $p'_{i}, q'_{i}), p_{i}$ and $q_{i})$ be integers 
such that $D_{i}$ is $(p'_{i}, q'_{i})$-$(p_{i}, q_{i})$-regular. 
Also, 
we define $p'_{0} = 0, p_{0} = 1$
and 
$q'_{0} = q_{0} = 2$. 
$D_{0}$ is $(0, 2)$-$(1, 2)$-regular by definition, 
that is, $(p'_{0}, q'_{0})$-$(p_{0}, q_{0})$-regular. 
Let $k$ be the number of times that the routine executes Case~3.1, 3.2, 3.3 or 3.4. 
Let $t_{1}, \ldots, t_{k}$ be positive integers such that 
the routine executes Case~3.1, 3.2, 3.3 or 3.4 for the $t_{1}, \ldots, t_{k}$-th time and 
$t_{1} < t_{2} < \cdots < t_{k}$. 
When the routine executes Case~3.4, 
it is finished. 
Thus, 
the case which the routine executes for the $t_{k}$-th time is Case~3.4. 
That is, 
$t_{k} = \tau$. 
\fi
\begin{LMA}\label{LMA:PCR}
	\ifnum \count10 > 0
	\fi
	\ifnum \count11 > 0
	%
	%\com{%%%%%}
	%
	Suppose that the routine is feasible. 
	Then, 
	$D_{\tau}$ does not contain any pre-connector. 
	\fi
\end{LMA}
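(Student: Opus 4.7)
The plan is to argue by contradiction: assume that $D_\tau$ contains a pre-connector at some vertex $(x,y) \in R(D_\tau)$, and derive a contradiction with the hypothesis that the routine has already executed Case~3.4. Since $D_\tau$ is the value of {\tt D} when the routine halts, the parameters $(p'_\tau, q'_\tau)$-$(p_\tau, q_\tau)$ describing the regularity of $D_\tau$ must satisfy one of the three disjuncts of Case~3.4, so $R(D_\tau)$ has one of only a few possible shapes.

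First I would unfold the definition of $(p'_\tau, q'_\tau)$-$(p_\tau, q_\tau)$-regularity in Sec.~\ref{sec:regularity} to describe precisely the set $R(D_\tau)$ for each terminal shape: under the first disjunct ($p'_\tau = p_\tau = m-1$) the regular frontier fills the grid down to row $m-1$; under the second disjunct ($q'_\tau = q_\tau = n-1$) it fills the grid across to column $n-1$; and under the third disjunct the regular region covers all but at most a $3\times 3$ block near the corner $(m,n)$. Then, for each candidate position $(x,y)$ inside $R(D_\tau)$, I would check whether the three required regular neighbors in pattern (p-i) or (p-ii) can actually lie in $R(D_\tau)$.

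The main step is a local-geometry case analysis that compares the two pre-connector patterns to the frontier of $R(D_\tau)$. In pattern (p-i) the vertex $(x,y)$ has its three west neighbors $(x-1,y-1)$, $(x-1,y)$, $(x-1,y+1)$ in $R(D_\tau)$ while $(x+1,y) \notin R(D_\tau)$, so $(x,y)$ must sit at the east boundary of a fully regular column; in each terminal shape, I would use conditions (Q1)--(Q9) or (P1)--(P9), together with the conditions defining which vertices are regular, to show that such a configuration places the routine state in a regime where one of Cases~1, 2, 3.2, or 3.3 could still have been applied, contradicting the fact that Case~3.4 was executed. The analysis for (p-ii) is symmetric, with the roles of the two coordinates interchanged and the roles of Cases~1 and 2 (and Cases~3.2 and 3.3) swapped.

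The main obstacle will be the third terminal disjunct of Case~3.4, where the regular region almost fills the grid and a hypothetical pre-connector would have to sit within a few cells of the corner $(m,n)$. In that regime the bounds $m-3 \le p'_\tau \le m-2$ and $n-3 \le q'_\tau \le n-2$ leave very little room, so one must carefully combine conditions (Q7) and (P7) on the vertices immediately past the frontier (which force certain cells to be \emph{not} in $D_\tau$) with the explicit list of regular vertices in the definition of $R(D_\tau)$, and verify that in every sub-case at least one of the three regular neighbors required by (p-i) or (p-ii) either falls outside the grid or fails to lie in $R(D_\tau)$. Once this last sub-case is handled, every terminal shape yields a contradiction and the lemma follows.
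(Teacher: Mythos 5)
Your proposal takes a static route: characterize $R(D_\tau)$ from the terminal parameters allowed by Case~3.4 and then check locally that no vertex can satisfy (p-i) or (p-ii). The paper instead argues dynamically, by induction on the executions of the routine: it shows that each execution of Case~3.1, 3.2 or 3.3 creates exactly one pre-connector, that the immediately following execution is necessarily Case~1 or~2 and converts that pre-connector into a connector, and that Cases~1, 2 and~3.4 never create a new pre-connector; since the routine terminates with Case~3.4, $D_\tau$ contains none.

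The gap in your plan is that $R(D_\tau)$ is \emph{not} determined by the terminal parameters $(p'_\tau, q'_\tau)$-$(p_\tau, q_\tau)$. The conditions (Q1)--(Q9) (resp.\ (P1)--(P9)) prescribe membership in $D_\tau$ only for a narrow band near the frontier --- roughly the rows $q-4,\dots,q$ in columns $\geq p'+1$ and the columns $p'-1, p'$ --- whereas the set of \emph{regular positions} extends over almost the whole grid (e.g.\ all of $(1,y),\dots,(p',y)$ for every $y$). Whether a vertex deep inside that region is a pre-connector depends on which of its neighbours lie in $D_\tau$, and the terminal regularity conditions are silent about that; the striped interior structure that actually rules out such pre-connectors is a consequence of the routine's entire history, which is exactly what the paper's induction tracks and which your static case analysis cannot recover from the definition alone. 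Relatedly, your intended contradiction (``one of Cases~1, 2, 3.2 or 3.3 could still have been applied'') does not follow from the mere existence of a pre-connector, since the routine's case selection depends only on the parameters $(p', q', p, q)$ relative to $m$ and $n$, not on the presence of pre-connectors; you would first have to pin the hypothetical pre-connector to a position incompatible with Case~3.4's parameter ranges, which again requires the interior structure you do not have. To close the gap you would essentially have to reintroduce the induction over the routine's steps.
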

\begin{proof}
	\ifnum \count10 > 0
	%
	%%%%%%%%%%%%%%
	%
	\fi
	\ifnum \count11 > 0
	%
	%\com{%%%%%}
	%
	By the supposition of this lemma, 
	the routine is feasible. 
	In the following, 
	we will show the following two properties by induction on the number $j \in [1, \tau]$ of times that the routine is executed: 
	\begin{itemize}
		\itemsep=-2.0pt
		\setlength{\leftskip}{0pt}
	\item[(a)]
	If the routine executes Case~3.1, 3.2 or 3.3 for the $j$-th execution, 
	then $D_{j}$ contains one pre-connector. 
	\item[(b)]
	If the routine executes Case~1, 2 or 3.4 for the $j$-th execution, 
	then $D_{j}$ does not contain any pre-connector. 
	\end{itemize}
	If we show that both (a) and (b) hold, 
	it implies that the statement of this lemma holds 
	because the routine is finished at the execution of Case~3.4 
	and (b) shows that the MCDS $D_{\tau}$ does not contain any pre-connector. 
	When $j = 0$, 
	that is, 
	at the initialization of the routine, 
	$D_{0} = \{ (1, 2) \}$ holds. 
	By the definition of a pre-connector, 
	$D_{0}$ does not contain a pre-connector
	(see Fig.~\ref{fig:ldr}(1)). 
	We assume that (a) and (b) hold for the $j = t$-th execution of the routine  
	and prove that they also hold for the $t+1$-st execution. 
	We discuss each case depending on which Cases the routine executes for the $t$-th and $t+1$-st times. 
	Note that only irregular vertices are handled when $D_{t}$ is regularized into  $D_{t+1}$, 
	and regular vertices are not removed from $D_{t}$ when the routine constructs $D_{t+1}$. 
	\noindent
	{\bf\boldmath $t = 0$ or Case~1:} 
	If either $t = 0$ or 
	the routine executes Case~1 for the $t$-th execution,  
	then $D_{t}$ does not contain any pre-connector 
	immediately before the $t+1$-st execution by (b) in the induction hypothesis. 
	$q'_{t} = q_{t}$ and thus, 
	if $p_{t} \leq m-1$, then the routine executes Case~1 for the $t+1$-st execution and 
	if $p_{t} = m$, the routine executes Case~3.1, 3.2, 3.3 or 3.4. 
	That is, 
	the routine can execute Case~1, 3.1, 3.2, 3.3 or 3.4 for the $t+1$-st execution. 
	If the routine executes Case~1, 
	the regular vertices newly added into $D_{t+1}$ satisfy the definitions of neither connectors nor pre-connectors, 
	that is, the vertices are dominators
	((2),(3),(10),(11) and (12) Fig.~\ref{fig:ldr}).
	Hence, 
	in these cases, the number of pre-connectors does not change and 
	$D_{t+1}$ does not contain any pre-connector, 
	which means that (b) is true. 
	If the routine executes Case~3.1, 3.2 or 3.3, 
	the regular vertex newly added into $D_{t+1}$ is only one pre-connector ((4),(9) and (13) in Fig.~\ref{fig:ldr}) 
	and (a) is true. 
	If the routine executes Case~3.4, 
	no vertex is added into $D_{t+1}$ and (b) is true. 
	\noindent
	{\bf Case~2:} 
	We can show this case similarly to Case~1. 
	If the routine executes Case~2 for the $t$-th execution, 
	then $D_{t}$ does not contain any pre-connector immediately before the $t+1$-st execution by (b) in the induction hypothesis.  
	That is, 
	the routine can execute Case~2, 3.1, 3.2, 3.3 or 3.4 for the $t+1$-st execution. 
	If the routine executes Case~3.1, 3.2 or 3.3, 
	the regular vertex newly added into $D_{t+1}$ is only one pre-connector ((9),(13) in Fig.~\ref{fig:ldr}) and (a) is true. 
	If the routine executes Case~2, 
	the regular vertex newly added into $D_{t+1}$ is a dominator
	((5),(6),(7),(14),(15),(16) in Fig.~\ref{fig:ldr}).
	If the routine executes Case~3.4, 
	no vertex is added into $D_{t+1}$. 
	Hence, 
	the number of pre-connectors does not change in these cases and 
	$D_{t+1}$ does not contain any pre-connector. 
	Thus, 
	(b) is true. 
	\noindent
	{\bf Case~3.1, 3.2, 3.3:} 
	If the routine executes Case~3.1, 3.2 or 3.3 for the $t$-th execution, 
	$D_{t}$ contains one pre-connector 
	immediately before the $t+1$-st execution by (a) in the induction hypothesis. 
	If the routine executes Case~1 or 2 for the $t+1$-st execution, 
	the pre-connector in $D_{t}$ becomes a connector in $D_{t+1}$
	((5),(10),(14) in Fig.~\ref{fig:ldr}). 
	Thus, 
	$D_{t+1}$ contains any pre-connector. 
	In the following, 
	we show that the routine does not execute Case~3 (3.1, 3.2, 3.3) for the $t+1$-st execution. 
	If the routine executes Case~3.1 for the $t$-th execution, 
	then $(2, 3)$ in $D_{t}$ is a pre-connector. 
	Since we discuss the case of $n \geq 4$, 
	the routine executes Case~2 but not Case~3 for the $t+1$-st execution. 
	If the routine executes Case~3.2 for the $t$-th, 
	then $p'_{t} \leq m-2$ by the condition of Case~3.2. 
	Then, 
	the routine executes Case~1 but not Case~3 for the $t+1$-st 
	because $p_{t} = p'_{t+1} \leq m-1$. 
	Similarly, 
	if the routine executes Case~3.3 for the $t$-th, 
	then $q'_{t} \leq n-2$ by the condition of Case~3.3. 
	Then, 
	the routine executes Case~2 but not Case~3 for the $t+1$-st 
	because $q_{t} = q'_{t+1} \leq n-1$. 
	\noindent
	{\bf Case~3.4:} 
	If the routine executes Case~3.4 for the $t$-th execution, 
	then (a) and (b) hold by the induction hypothesis. 
	We have shown the (a) and (b) are true. 
	\fi
\end{proof}

\ifnum \count12 > 0
\begin{landscape}
\begin{figure*}
	 \begin{center}
	  \includegraphics[width=\linewidth]{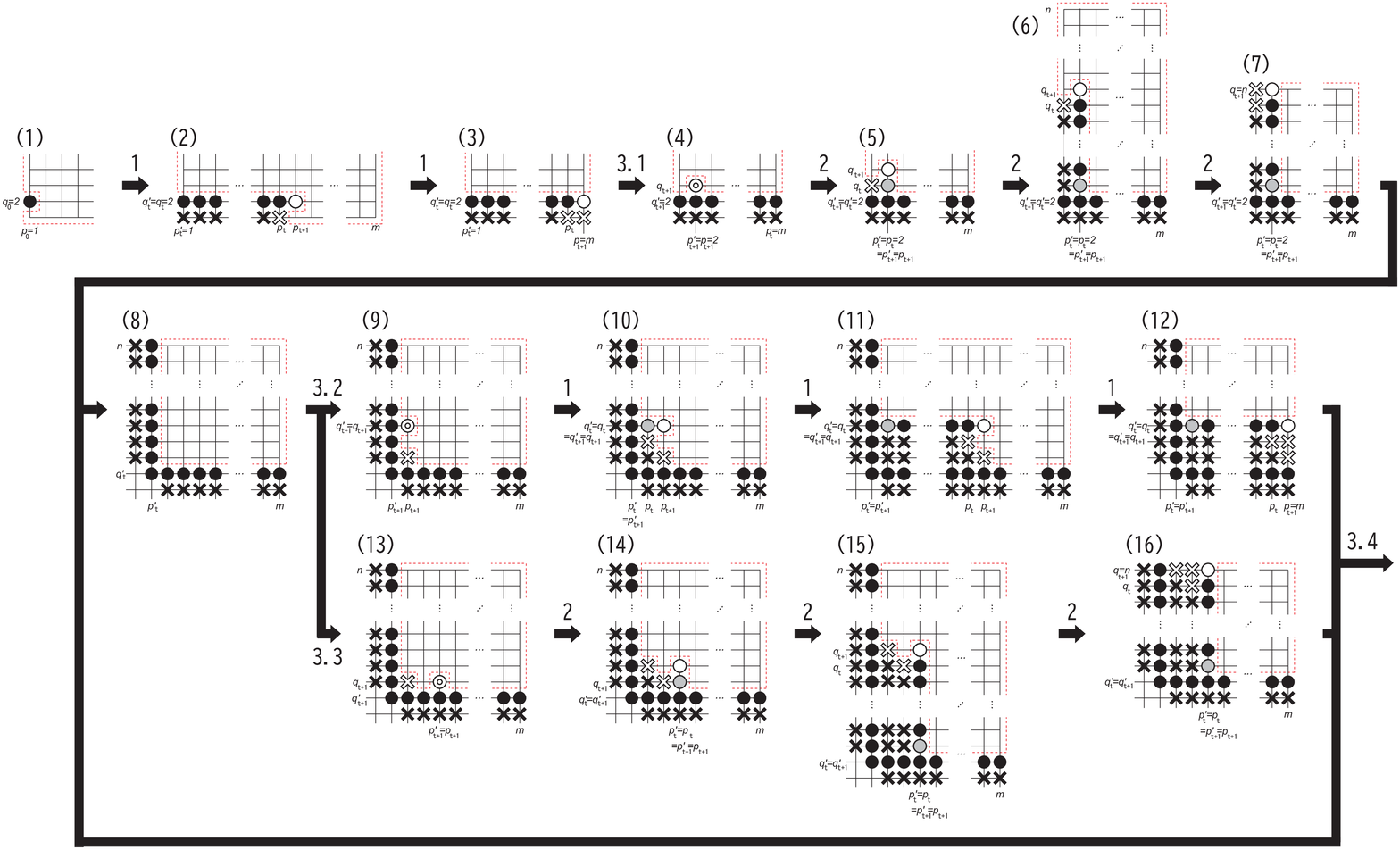}
	 \end{center}
	 \caption{
\ifnum \count10 > 0
\fi
\ifnum \count11 > 0
%
%\com{%%%%%}
%
Situations of MCDS $D_{t+1}$. 
Circles and crosses denote regular vertices in $D_{t+1}$ and not in $D_{t+1}$, respectively. 
Vertices not enclosed by dashed lines are regular. 
White circles, double circles and white crosses become newly regular for the $t+1$-st execution. 
Double circles and gray circles denote pre-connectors and connectors, respectively. 
Each number above an arrow implies which Case the routine executes. 
For example, 
``3.1'' above the arrow between Figures~(3) and (4) means 
that the routine executes Case~3.1 in the situation of (3) and 
after that, 
$D_{t+1}$ becomes the situation of (4). 
The situations of $D_{t+1}$ when Case~3.4 is executed for the $t+1$-st execution are illustrated in Fig.~\ref{fig:lir}. 
\fi
			}
	\label{fig:ldr}
\end{figure*}
\end{landscape}
\fi

%
%%%LDR
\begin{LMA}\label{LMA:DR}
	\ifnum \count10 > 0
	%
	%%%%%%%%%%%%%%
	%
	\fi
	\ifnum \count11 > 0
	%
	%\com{%%%%%}
	%
	Suppose that the routine is feasible. 
	Then, 
	the number of vertices dominated by dominators in $D_{\tau}$ is $3d_{m,n}$. 
	\fi
\end{LMA}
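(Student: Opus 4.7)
The plan is to partition the set of vertices dominated by dominators in $D_\tau$ into disjoint triples, one per dominator, and thereby arrive at the count $3d_{m,n}$. Throughout, I would freely exploit the regularity conditions (Q1)--(Q9) and (P1)--(P9) of Section~\ref{sec:regularity}, which force the regular part of $D_\tau$ into a very rigid shape: full ``strips'' of regular vertices repeating every three rows (or, by symmetry, every three columns), with the rows immediately above and below each such strip containing no regular vertex at all.

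First, I would classify each dominator $v=(x,y)$ in $D_\tau$ according to whether $v$ sits inside a full strip (condition (Q5)) or inside a connecting column (condition (Q6)). To a strip-dominator I assign the vertical triple $T(v)=\{(x,y-1),(x,y),(x,y+1)\}$ (suitably truncated when $v$ sits on the top or bottom row of $G_{m,n}$), and to a column-dominator I assign the horizontal triple $T(v)=\{(x-1,y),(x,y),(x+1,y)\}$. In both cases all three entries of $T(v)$ are neighbours or equal to $v$, hence dominated by $v$, and the two outer entries of $T(v)$ are regular-free by the regularity conditions. Using definitions (c-i)--(c-ii) together with Lemma~\ref{LMA:PCR} (which rules out pre-connectors in $D_\tau$), a dominator cannot simultaneously be a strip-dominator and a column-dominator, so the assignment $v\mapsto T(v)$ is unambiguous.

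Next I would verify the two key properties: disjointness of the triples and that every vertex dominated by some dominator lies in some $T(v)$. Disjointness among strip-dominators in a common strip is immediate because distinct vertices of a row generate distinct vertical triples; disjointness across different strips, and between strip- and column-dominators, uses (Q4), (Q8), (P4), (P8), which separate full strips from each other's shadows and keep connecting columns away from those shadows. For the coverage claim I would walk through the (at most five) vertices dominated by a fixed dominator and observe that each is either itself a dominator (contributing to its own triple) or appears as the unique ``above/below'' vertex of a strip-dominator, respectively as the unique ``left/right'' vertex of a column-dominator.

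The main obstacle will be the careful handling of the boundary configurations: dominators on the outer rows or columns of $G_{m,n}$, dominators adjacent to the irregular vertices near the terminal corner produced by the routine (where Case~3.4 was executed), and dominators sitting next to a connector. For each such local pattern, guided by Figures~\ref{fig:regular} and \ref{fig:cr}, I would inspect the relevant neighbourhood, confirm that $T(v)$ is still a triple of three distinct vertices all dominated by $v$, and check that it remains disjoint from every neighbouring $T(v')$. Once this finite case analysis is complete, summing $|T(v)|=3$ over the $d_{m,n}$ dominators yields the stated total of $3d_{m,n}$ dominated vertices.
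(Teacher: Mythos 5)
Your plan replaces the paper's argument --- an induction over the executions of the routine, showing that each new dominator contributes exactly three newly dominated vertices, with a floating ``$+1$'' at the frontier that is absorbed each time a strip reaches the boundary --- by a static tiling of the dominated region into triples, one per dominator. The idea is attractive, but as written it rests on two claims that are not available in the form you use them. First, you assert that the conditions (Q1)--(Q9) and (P1)--(P9) ``force the regular part of $D_\tau$ into a very rigid shape'' of periodic strips and connecting columns. They do not: $(p',q')$-$(p,q)$-regularity only constrains $D$ in a band of at most five rows and two columns around the frontier (rows $q-4,\dots,q$ and columns $p'-1,p'$ in the (Q)-case) and says nothing about $D$ at, say, row $q-6$ or column $p'-3$. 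The global periodic structure of $D_\tau$ is a consequence of the \emph{history} of the routine, not of the final regularity certificate, and the paper never needs to state it because its induction only ever inspects the frontier. To make your tiling rigorous you would first have to prove a global structure lemma for $D_\tau$, by essentially the same induction over the executions that the paper performs in Lemmas~\ref{LMA:PCR}, \ref{LMA:DR} and \ref{LMA:PQ}; the induction has not been avoided, only hidden in an unstated prerequisite.

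Second, your classification of dominators into strip-dominators (vertical triples) and column-dominators (horizontal triples) is not well defined, and the claimed unambiguity does not follow from (c-i)/(c-ii) and Lemma~\ref{LMA:PCR}. Junction vertices such as $(2,2)$, which lies in the full row $y=2$ \emph{and} (via (Q6)) in the connecting column $x=2$, or more generally $(p',q-3)$ where a connecting column meets the strip it grows out of, belong to both families; Lemma~\ref{LMA:PCR} only excludes pre-connectors and is silent on this. One can repair this with a precedence convention (e.g.\ row-strips win), but then the disjointness and coverage checks at every junction, at every connector, and at the irregular corner of Lemma~\ref{LMA:IR}(c) must be redone under that convention, which your sketch does not do. Finally, your parenthetical ``suitably truncated'' triples would have size two and would destroy the count $3d_{m,n}$; you would instead have to show that truncation never occurs in $D_\tau$ (it does not, because the first strip sits at $y=2$ and the last frontier stops at $m-1$ or $n-1$, but this again requires the global structure). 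In short, the approach is salvageable, but as proposed it is missing precisely the structural input that the paper's step-by-step accounting supplies.
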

\begin{proof}
	\ifnum \count10 > 0
	%
	%%%%%%%%%%%%%%
	%
	\fi
	\ifnum \count11 > 0
	%
	%\com{%%%%%}
	%
	For any $i = 0, \ldots, \tau$, 
	$d_{i}$ denotes the number of dominators in $D_{i}$. 
	By the supposition of this lemma, 
	the routine is feasible. 
	In what follows,
	we will prove the following two properties by induction on the number $j \in [1, \tau]$ of the times that the routine is executed: 
	\begin{itemize}
		\itemsep=-2.0pt
		\setlength{\leftskip}{0pt}
	\item[(a)]
	If either $p'_{j} = p_{j}$ and $q_{j} \leq n-1$ or 
	$q'_{j} = q_{j}$ or $p_{j} \leq m-1$, 
	then the number of vertices dominated by either dominators or pre-connectors in $D_{j}$ is $3d_{j}+1$. 
	\item[(b)]
	If either $p'_{j} = p_{j}$ and $q_{j} = n$ or 
	$q'_{j} = q_{j}$ and $p_{j} = m$, 
	then the number of vertices dominated by either dominators or pre-connectors in $D_{j}$ is $3d_{j}$. 
	\end{itemize}
	When the routine executes Case~3.4 and is finished, 
	either $q'_{\tau} = q_{\tau}$ and $p_{\tau} = m$ 
	or 
	$p'_{\tau} = p_{\tau}$ and $q_{\tau} = n$
	by the condition of Case~3. 
	$D_{\tau}$ does not contain any pre-connector 
	by Lemma~\ref{LMA:PCR}. 
	Hence, 
	if these properties hold, 
	then (b) holds when the routine is finished, and 
	the number of vertices dominated by dominators in $D_{\tau}$ is $3d_{\tau}$.  
	By the definition of $d_{m,n}$, 
	the number of dominators at the time when the routine is finished is $d_{m,n}=d_{\tau}$ 
	and we can show the statement of the lemma. 
	When $j = 0$, that is, 
	at the time of the initialization, 
	$D_{0}$ is $(0, 2)$-$(1, 2)$-regular
	((1) in Fig.~\ref{fig:ldr}). 
	Since $D_{0} = \{ (1,2) \}$, $d_{0} = 1$. 
	The vertex $(1,2)$ dominates itself and three adjacent vertices $(1, 1), (1, 3)$ and $(2, 2)$, and the number of vertices dominated by a dominator in $D_{0}$ is four. 
	Hence, 
	when $j=0$, the property is true. 
	We assume that 
	(a)	and (b) hold when the routine is executed for the $j=t$-th execution, and 
	we show that they also hold for the $t+1$-st execution. 
	Note that only irregular vertices are handled at the regularization, 
	and regular vertices are not removed from $D_{t}$ when the routine constructs $D_{t+1}$. 
	\noindent
	{\bf Case~1:} 
	If the routine executes Case~1 
	for the $t+1$-st execution, 
	then $D_{t+1}$ is $(p'_{t}, q'_{t})$-$(p_{t}+1, q_{t})$-regular. 
	By definition, 
	the set of regular vertices in $D_{t+1}$ is the set of all the vertices in $D_{t}$ plus the vertex $(p_{t}+1, q_{t})$. 
	Since $(p_{t}, q_{t}-1) \notin R(D_{t+1})$ by the condition (Q2) of the regularity, 
	$(p_{t}+1, q_{t})$ satisfies the definition of neither connectors nor pre-connectors. 
	Thus, 
	$(p_{t}+1, q_{t})$ is a dominator and 
	we have 
	\begin{equation} \label{LMA:DR:eq.1}
		d_{t+1} = d_{t} + 1. 
	\end{equation}
	First, 
	we consider the case in which 
	$t = 0$ or 
	the routine executes Case~3.2 for the $t$-th execution, 
	that is, 
	$p_{t} = p'_{t}+1$ 
	(i.e., $p_{t+1} = p'_{t}+2$) 
	and 
	$p_{t+1} = p_{t}+1 \leq m-1$ 
	((10) in Fig.~\ref{fig:ldr}). 
	In this case, 
	it is sufficient that we prove (a) is true 
	because $p_{t}+1 \leq m-1$. 
	$(p_{t}+1, q_{t})$ dominates the five vertices $(p_{t}, q_{t}),(p_{t}+1, q_{t}),(p_{t}+1, q_{t}-1),(p_{t}+1, q_{t}+1)$ and $(p_{t}+2, q_{t})$. 
	By the definition of $(p'_{t}, q'_{t})$-$(p_{t}, q_{t})$-regularity, 
	vertices in $R(D_{t})$ do not dominate none of $(p_{t}+1, q_{t}-1),(p_{t}+1, q_{t}+1)$ and $(p_{t}+2, q_{t})$. 
	Thus, 
	the number of vertices which vertices in $R(D_{t+1})$ dominate increases by three vertices. 
	Since (a) is true for the $t$-th execution by the induction hypothesis, 
	dominators and a pre-connector in $D_{t}$ dominate $3 d_{t} +1$ vertices. 
	Thus, 
	we have using Eq.~(\ref{LMA:DR:eq.1}), 
	\[
		3 d_{t} + 1 + 3 = 3(d_{t} + 1) + 1 = 3 d_{t+1} + 1, 
	\]
	which implies that 
	(a)	is also true for $j = t+1$ 
	if $t = 0$ or 
	the routine executes Case~3.2 for the $t$-th execution. 
	Next, 
	we consider the case in which 
	the routine executes Case~1 for the $t$-th and $t+2$-nd executions, 
	that is, 
	$p_{t} > p'_{t}+1$ 
	(i.e., $p_{t+1} > p'_{t}+2$) 
	and 
	$p_{t+1} = p_{t}+1 \leq m-1$
	(Fig.~\ref{fig:ldr}(2),(3),(11)). 
	In this case, 
	it suffices to prove that (a) is true 
	because $p_{t}+1 \leq m-1$. 
	$(p_{t}+1, q_{t})$ dominates the five vertices $(p_{t}, q_{t}),(p_{t}+1, q_{t}),(p_{t}+1, q_{t}-1),(p_{t}+1, q_{t}+1)$ and $(p_{t}+2, q_{t})$, and 
	vertices in $R(D_{t})$ dominate none of $(p_{t}+1, q_{t}-1),(p_{t}+1, q_{t}+1)$ and $(p_{t}+2, q_{t})$ by the $(p'_{t}, q'_{t})$-$(p_{t}, q_{t})$-regularity. 
	Thus, 
	the number of vertices dominated by vertices in $R(D_{t+1})$ increase by three vertices 
	and by Eq.~(\ref{LMA:DR:eq.1}), 
	\[
		3 d_{t} + 1 + 3 = 3(d_{t} + 1) + 1 = 3 d_{t+1} + 1. 
	\]
	(a)	is true for $j = t+1$
	if the routine executes Case~1 for the $t$-th and $t+2$-nd executions. 
	Finally, 
	we consider the case in which 
	the routine executes Case~1 for the $t$-th execution and 
	does not execute Case~1 for the $t+2$-nd execution, 
	that is, 
	$p_{t+1} = p_{t}+1 = m$
	(Fig.~\ref{fig:ldr}(3),(12)). 
	In this case, 
	it suffices to prove that (b) is true by the condition. 
	$(p_{t}+1, q_{t})$ dominates the four vertices $(p_{t}, q_{t}),(m, q_{t}),(m, q_{t}-1)$ and $(m, q_{t}+1)$ and 
	vertices in $R(D_{t})$ dominate neither $(m, q_{t}-1)$ nor $(m, q_{t}+1)$
	by the $(p'_{t}, q'_{t})$-$(p_{t}, q_{t})$-regularity. 
	Thus, 
	the number of vertices dominated by vertices in $R(D_{t+1})$ increases by two vertices. 
	Using~Eq.~(\ref{LMA:DR:eq.1}), 
	we have 
	\[
		3 d_{t} + 1 + 2 = 3(d_{t} + 1) = 3 d_{t+1}, 
	\]
	which implies 
	(b) is also true in this case when $j = t+1$. 
	\noindent
	{\bf Case~2: } 
	We can prove this case similarly to the proof of Case~1. 
	If the routine executes Case~2 
	for the $t+1$-st execution, 
	then $D_{t+1}$ is $(p'_{t}, q'_{t})$-$(p_{t}, q_{t}+1)$-regular. 
	The set of regular vertices in $D_{t+1}$ is the set of all the vertices in $D_{t}$ plus the vertex $(p_{t}, q_{t}+1)$. 
	Since $(p_{t}-1, q_{t}) \notin R(D_{t+1})$ by the condition (P2) of the regularity, 
	$(p_{t}, q_{t}+1)$ satisfies the definition of neither connectors nor pre-connectors. 
	Thus, 
	$(p_{t}, q_{t}+1)$ is a dominator and
	we have 
	\begin{equation} \label{LMA:DR:eq.2}
		d_{t+1} = d_{t} + 1. 
	\end{equation}
	First, 
	we consider the case in which 
	the routine executes Case~3.3 for the $t$-th execution, 
	that is, 
	$q_{t} = q'_{t}+1$ 
	(i.e., $q_{t+1} = q'_{t}+2$) 
	and 
	$q_{t+1} = q_{t}+1 \leq n-1$ 
	(Fig.~\ref{fig:ldr}(5),(14)).
	In this case, 
	it is sufficient that we prove (a) is true 
	because $q_{t}+1 \leq n-1$. 
	$(p_{t}, q_{t}+1)$ dominates the five vertices $(p_{t}, q_{t}),(p_{t}, q_{t}+1),(p_{t}-1, q_{t}+1),(p_{t}+1, q_{t}+1)$ and $(p_{t}, q_{t}+2)$. 
	By the definition of $(p'_{t}, q'_{t})$-$(p_{t}, q_{t})$-regularity, 
	vertices in $R(D_{t})$ do not dominate none of $(p_{t}-1, q_{t}+1),(p_{t}+1, q_{t}+1)$ and $(p_{t}, q_{t}+2)$. 
	Thus, 
	the number of vertices which vertices in $R(D_{t+1})$ dominate increases by three vertices. 
	Since (a) is true for the $t$-th execution by the induction hypothesis, 
	dominators and a pre-connector in $D_{t}$ dominate $3 d_{t} +1$ vertices. 
	Thus, 
	we have using Eq.~(\ref{LMA:DR:eq.2}), 
	\[
		3 d_{t} + 1 + 3 = 3(d_{t} + 1) + 1 = 3 d_{t+1} + 1, 
	\]
	which implies that 
	(a)	is also true for $j = t+1$ 
	if the routine executes Case~3.3 for the $t$-th execution. 
	Next, 
	we consider the case in which 
	the routine executes Case~2 for the $t$-th and $t+2$-nd executions, 
	that is, 
	$q_{t} > q'_{t}+1$, 
	($q_{t+1} > q'_{t}+2$)
	and 
	$q_{t+1} = q_{t}+1 \leq n-1$
	(Fig.~\ref{fig:ldr}(6),(15)). 
	In this case, 
	it suffices to prove that (a) is true 
	because $q_{t}+1 \leq n-1$. 
	$(p_{t}, q_{t}+1)$ newly dominates the three vertices $(p_{t}-1, q_{t}+1),(p_{t}+1, q_{t}+1)$ and $(p_{t}, q_{t}+2)$, and 
	vertices in $R(D_{t})$ do not dominate them. 
	Thus, 
	the number of vertices dominated by vertices in $R(D_{t+1})$ increase by three vertices and by Eq.~(\ref{LMA:DR:eq.2}), 
	\[
		3 d_{t} + 1 + 3 = 3(d_{t} + 1) + 1 = 3 d_{t+1} + 1. 
	\]
	(a)	is true for $j = t+1$
	if the routine executes Case~2 for the $t$-th and $t+2$-nd executions. 
	Finally, 
	we consider the case in which 
	the routine executes Case~2 for the $t$-th execution and 
	does not execute Case~2 for the $t+2$-nd execution, 
	that is, 
	$q_{t+1} = q_{t}+1 = n$
	(Fig.~\ref{fig:ldr}(7),(16)). 
	$(p_{t}, q_{t}+1)$ newly dominates the two vertices $(p_{t}-1, n)$ and $(p_{t}+1, n)$, and 
	vertices in $R(D_{t})$ do not dominate them
	by the $(p'_{t}, q'_{t})$-$(p_{t}, q_{t})$-regularity. 
	Thus, 
	the number of vertices dominated by vertices in $R(D_{t+1})$ increases by two vertices. 
	Using~Eq.~(\ref{LMA:DR:eq.2}), 
	we have 
	\[
		3 d_{t} + 1 + 2 = 3(d_{t} + 1) = 3 d_{t+1}, 
	\]
	which implies 
	(b) is also true in this case when $j = t+1$. 
	\noindent
	{\bf Case~3.1:} 
	If the routine executes Case~3.1, 
	then a vertex which is not in $D_{t}$ but in $D_{t+1}$ is only $(2, 3)$, 
	which is a pre-connector, and dominators do not change
	(Fig.~\ref{fig:ldr}(4)). 
	Thus, 
	\begin{equation} \label{LMA:DR:eq.3}
		d_{t+1} = d_{t}. 
	\end{equation}
	Also, 
	$(2, 3)$ dominates $(2, 4)$, 
	which is not dominated by vertices in $R(D_{t})$. 
	Since (b) is true for the $t$-th execution by the induction hypothesis, 
	dominators in $D_{t}$ dominate $3 d_{t}$ vertices. 
	Together with Eq.~(\ref{LMA:DR:eq.3}), 
	we have 
	\[
		3 d_{t} + 1 = 3 d_{t+1} + 1, 
	\]
	which implies that 
	(a)	is true for $j = t+1$ in this case. 
	\noindent
	{\bf Case~3.2:} 
	We can prove this case similarly to the previous case
	(Fig.~\ref{fig:ldr}(9)). 
	If the routine executes Case~3.2, 
	then a vertex which is not in $D_{t}$ but in $D_{t+1}$ is only $(p_{t}, q_{t})$, 
	which is a pre-connector, and dominators do not change. 
	Thus, 
	\begin{equation} \label{LMA:DR:eq.4}
		d_{t+1} = d_{t}. 
	\end{equation}
	Also, 
	$(p_{t}, q_{t})$ dominates $(p_{t}+1, q_{t})$, 
	which is not dominated by vertices in $R(D_{t})$. 
	Since (b) is true for the $t$-th execution by the induction hypothesis, 
	dominators in $D_{t}$ dominate $3 d_{t}$ vertices. 
	Together with Eq.~(\ref{LMA:DR:eq.4}), 
	we have 
	\[
		3 d_{t} + 1 = 3 d_{t+1} + 1, 
	\]
	which implies that 
	(a)	is true for $j = t+1$ in this case. 

	\noindent
	{\bf Case~3.3:} 
	We omit the proof of this case 
	because we prove it similarly to that of Case~3.2
	(Fig.~\ref{fig:ldr}(13)). 
	\noindent
	{\bf Case~3.4:} 
	The routine does not do anything in Case~3.4, and $D_{t+1} = D_{t}$ hold. 
	Hence, 
	(a)	and (b) hold by the induction hypothesis. 
	We have shown that the both properties hold for the $t+1$-st execution. 
	\fi
\end{proof}

\ifnum \count10 > 0
%
%%%\com{%%%%%%}
%

%
\fi
\ifnum \count11 > 0
%
%%%\com{%%%%%}
%

%
\fi
%

%
%%%LDR
\begin{LMA}\label{LMA:PQ}
	\ifnum \count10 > 0
	%
	%%%%%%%%%%%%%%
	%
	\fi
	\ifnum \count11 > 0
	%
	%\com{%%%%%}
	%
	Suppose that the routine is feasible. 
	Then, 
	the following properties hold: 
	\begin{itemize}
	\itemsep=-2.0pt
	\setlength{\leftskip}{0pt}
		\item[(i)]
			For any $i \in [1, t_{1}-1]$, 
			$p'_{i} = 0$
			and 
			$q'_{i} = 2$.
		\item[(ii)]
			For any $i \geq t_{1}$, 
			%\[
			$(p'_{i} \mod 3) = 2$
			%\]
			and 
			%\[
			$(q'_{i} \mod 3) = 2$. 
			%\]
	%
	\end{itemize}
	\fi
\end{LMA}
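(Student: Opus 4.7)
The plan is to prove both parts by induction on $i$, exploiting the fact that among the six regularization cases only Case~3.1, Case~3.2 and Case~3.3 alter the values $p'$ and $q'$ at all, while Cases~1, 2 and 3.4 leave them unchanged.

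For (i), I would first observe that Cases~1 and~2 preserve $p'$ and $q'$: indeed, in Case~1 the state transitions from $(p'_t,q'_t)$-$(p_t,q_t)$ to $(p'_t,q'_t)$-$(p_t+1,q_t)$ and in Case~2 to $(p'_t,q'_t)$-$(p_t,q_t+1)$. For $i<t_1$, by the definition of $t_1$ the routine has executed only Cases~1 and~2, so by a trivial induction $p'_i=p'_0=0$ and $q'_i=q'_0=2$. A small point to verify is that in fact only Case~1 (not Case~2) can fire before $t_1$: since Case~2 requires $p'_t=p_t$, and we start with $p'_0=0$, $p_0=1$, and Case~1 only increases $p$, the equality $p'=p$ is never reached while $p'$ remains $0$. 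This observation is not strictly needed for (i), but it also pins down what happens at step $t_1$ and feeds into part~(ii).

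For (ii), I would treat the base case $i=t_1$ separately. By (i), immediately before the $t_1$-th step we have $p'_{t_1-1}=0$, $q'_{t_1-1}=2$, and by the analysis above the routine has pushed $p$ up to $m$, so Case~3 must fire; the condition of Case~3.1 ($p'=0$) is met, so Case~3.1 is what executes. By the description of Case~3.1 the new state is $(2,2)$-$(2,3)$-regular, giving $p'_{t_1}=2$, $q'_{t_1}=2$, and both residues equal $2 \pmod 3$. This provides the base case.

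For the inductive step in (ii), assume $(p'_i \bmod 3)=(q'_i \bmod 3)=2$ for some $i\geq t_1$. I would check the effect of each case on the residues: Cases~1, 2 and~3.4 do not change $p'$ or $q'$; Case~3.2 sends $q'_i$ to $q'_i+3$ while keeping $p'_i$ fixed; Case~3.3 sends $p'_i$ to $p'_i+3$ while keeping $q'_i$ fixed. In all cases the residues modulo~$3$ are preserved. The only subtlety is ruling out a re-execution of Case~3.1 for $i>t_1$, but the hypothesis $p' \ne 0$ built into Cases~3.2 and~3.3, together with the fact that once $p'\geq 2$ it never returns to $0$ (Cases~1 and~2 keep it fixed, Cases~3.2 and~3.3 keep it fixed or increase it by $3$), guarantees that Case~3.1 cannot be invoked again. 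Hence the residues remain $(2,2)$ for all $i\geq t_1$, completing the induction.

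The main (mild) obstacle is the case analysis at the transition $i=t_1$: one must verify that Case~3.1 is the case that executes there and not Cases~3.2, 3.3, or 3.4, which requires checking that their guards fail when $p'=0$. This follows directly from the conditions ``$p'\neq 0$'' in Cases~3.2 and~3.3 and from the guards of Case~3.4, but it is the only place where the bookkeeping is not fully mechanical. Everything else reduces to reading off how each case updates $(p'_t,q'_t)$ from the routine definition.
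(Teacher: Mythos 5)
Your proposal is correct and follows essentially the same route as the paper: an induction tracking how each of Cases 1, 2, 3.1, 3.2, 3.3 and 3.4 updates $(p'_i, q'_i)$, with Case 3.1 firing exactly once at $i = t_1$ (setting $p'_{t_1} = q'_{t_1} = 2$) and Cases 3.2 and 3.3 thereafter only adding $3$ to $q'$ or $p'$, preserving the residues modulo $3$. The paper additionally traces the full trajectory of $(p_i, q_i)$ between the Case-3 executions, but that extra bookkeeping is not needed for the statement of this lemma, and your residue-only argument covers it.
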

\begin{proof}
	\ifnum \count10 > 0
	%
	%%%%%%%%%%%%%%
	%
	\fi
	\ifnum \count11 > 0
	%
	%\com{%%%%%}
	%
	By definition, 
	$p'_{0} = 0, q'_{0} = 2, p_{0} = 1$ and $q_{0} = 2$
	(Fig.~\ref{fig:ldr}(1)). 
	We can prove the following by induction on the number $i$ of times that the routine is executed:  
	For each $i = 1, \ldots, t_{1}-1$, 
	$q'_{i-1} = q_{i-1}$
	and 
	$p_{i} \leq m-1$. 
	Also, 
	the routine executes Case~1 for the $i$-th execution, 
	$p'_{i} = p'_{0} = 0$, 
	$p_{i} = p_{i-1} + 1$ and 
	$q'_{i} = q_{i} = q'_{0} = 2$
	(Fig.~\ref{fig:ldr}(2),(3)). 
	Thus, 
	(i) in the statement of this lemma holds. 
	Since $p'_{t_{1}-1} = p'_{0} = 0$, 
	$p_{t_{1}-1} = m$
	and 
	$q'_{t_{1}-1} = q_{t_{1}-1} = q'_{0} = 2$, 
	the routine executes Case~3.1 for the $t_{1}$-the execution
	(Fig.~\ref{fig:ldr}(4)). 
	Hence, 
	$p'_{t_{1}} = p_{t_{1}} = 2$, 
	$q'_{t_{1}} = 2$
	and
	$q_{t_{1}} = 3$. 
	We can prove the following by induction on the number $i$ of times that the routine is executed:  
	For each $i = t_{1} + 1, \ldots, t_{2}-1$, 
	the routine executes Case~2, 
	$p'_{i} = p_{i} = 2$, 
	$q'_{i} = 2$
	and 
	$q_{i} = q_{i-1} + 1$
	(Fig.~\ref{fig:ldr}(5),(6),(7)). 
	Since $p'_{t_{2}-1} = p_{t_{2}-1} = 2$, 
	$q'_{t_{2}-1} = 2$
	and
	$q_{t_{2}-1} = n$, 
	the routine executes either Case~3.2 or 3.3 for the $t_{2}$-th execution
	(Fig.~\ref{fig:ldr}(8)). 
	First, 
	let us consider the case in which 
	the routine executes Case~3.2 for the $t_{j} \hspace{1mm} (j = 2, \ldots, k-1)$-th execution. 
	Then, 
	$p'_{t_{j}} = p'_{t_{j}-1}$, 
	$p_{t_{j}} = p'_{t_{j}-1} + 1$
	and 
	$q'_{t_{j}} = q_{t_{j}} = q'_{t_{j}-1} + 3$
	(Fig.~\ref{fig:ldr}(9)). 
	We can prove by induction on $i$ the following: 
	the routine executes Case~1 for the $i = t_{j} + 1, \ldots, t_{j+1}-2$-th execution, 
	$p'_{i} = p'_{t_{j}}$, 
	$p_{i} = p_{i-1} + 1$
	and 
	$q'_{i} = q_{i} = q'_{t_{j}}$
	(Fig.~\ref{fig:ldr}(10),(11)).
	Moreover, 
	the routine executes Case~1 for the $t_{j+1}-1$-th execution, 
	$p'_{t_{j+1}-1} = p'_{t_{j}}$, 
	$p_{t_{j+1}-1} = m$
	and 
	$q'_{t_{j+1}-1} = q_{t_{j+1}-1} = q'_{t_{j}}$
	(Fig.~\ref{fig:ldr}(12)).
	Next, 
	we consider the case in which 
	the routine executes Case~3.3 for the $t_{j} \hspace{1mm} (j = 2, \ldots, k-1)$-th execution. 
	Then, 
	$q'_{t_{j}} = q'_{t_{j}-1}$, 
	$q_{t_{j}} = q'_{t_{j}-1} + 1$
	and 
	$p'_{t_{j}} = p_{t_{j}} = p'_{t_{j}-1} + 3$
	(Fig.~\ref{fig:ldr}(13)). 
	We can prove by induction on $i$ the following: 
	the routine executes Case~2 for the $i = t_{j} + 1, \ldots, t_{j+1}-2$-th execution, 
	$q'_{i} = q'_{t_{j}}$, 
	$q_{i} = q_{i-1} + 1$
	and 
	$p'_{i} = p_{i} = p'_{t_{j}}$
	(Fig.~\ref{fig:ldr}(14),(15)).
	Moreover, 
	the routine executes Case~2 for the $t_{j+1}-1$-th execution, 
	$q'_{t_{j+1}-1} = q'_{t_{j}}$, 
	$q_{t_{j+1}-1} = n$
	and 
	$p'_{t_{j+1}-1} = p_{t_{j+1}-1} = p'_{t_{j}}$
	(Fig.~\ref{fig:ldr}(16)). 
	By the above argument, 
	$p'_{t_{1}} = 2$
	and 
	$q'_{t_{1}} = 2$. 
	Furthermore, 
	for any $i \geq t_{1} + 1$, 
	if the routine executes Case~3.2, 
	then the value of $q'_{i}$ increases by three. 
	Otherwise, 
	its value does not change. 
	Similarly, 
	if the routine executes Case~3.3, 	
	then the value of $p'_{i}$ increases by three, and otherwise, 
	its value does not change.
	Therefore, 
	(ii) in the statement holds, 
	which completes the proof. 
	\fi
\end{proof}
\ifnum \count10 > 0
%
%%%\com{%%%%%%}
%

%
\fi
\ifnum \count11 > 0
%
%%%\com{%%%%%}
%

%
\fi
%

%
%%%LDR
\begin{LMA}\label{LMA:CR0}
	\ifnum \count10 > 0
	%
	%%%%%%%%%%%%%%
	%
	\fi
	\ifnum \count11 > 0
	%
	%\com{%%%%%}
	%
	Suppose that the routine is feasible. 	
	Then, 
	for any connector in $D_{\tau}$, 
	vertices dominated by the connector are dominated by at least one dominator in $D_{\tau}$. 
	\fi
\end{LMA}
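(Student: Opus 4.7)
The plan is to take an arbitrary connector $c=(x,y) \in D_\tau$ and, for each vertex $v$ dominated by $c$, exhibit a dominator in $D_\tau$ that dominates $v$. By Lemma~\ref{LMA:PCR}, $D_\tau$ contains no pre-connectors, so every regular vertex is either a connector or a dominator; the task therefore reduces to locating regular witnesses near $c$ and showing that each witness is not itself a connector.

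Assume first that $c$ satisfies (c-i), so $(x-1,y-1),(x-1,y),(x-1,y+1),(x+1,y) \in R(D_\tau)$ and $(x,y-1) \notin R(D_\tau)$; the dominated set is $\{c,\,(x\pm 1,y),\,(x,y\pm 1)\}$. I propose the witnesses $(x+1,y)$ for $c$ and itself, $(x-1,y-1)$ for $(x-1,y)$ and $(x,y-1)$ and itself, and $(x-1,y+1)$ for $(x,y+1)$ and itself. Each witness is adjacent to or equal to the vertex it witnesses, so everything reduces to proving that the three witnesses are dominators, i.e.\ that they satisfy neither (c-i) nor (c-ii). The exclusion of (c-i) at $(x+1,y)$ and at $(x-1,y-1)$ is immediate from $(x,y-1) \notin R(D_\tau)$; the exclusion of (c-ii) at $(x+1,y)$ is immediate from $c \in R(D_\tau)$. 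The remaining non-trivial exclusions, namely (c-ii) at $(x-1,y-1)$ and both patterns at $(x-1,y+1)$, are handled by combining the structural description of $D_\tau$ from Lemma~\ref{LMA:PQ}, which pins down the residues $\pmod 3$ of the coordinates of every connector produced by Cases~3.1--3.3, with the non-membership clauses among (Q2)--(Q9) and (P2)--(P9) that identify the required non-regular neighbor at each witness.

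The case where $c$ satisfies (c-ii) is treated symmetrically after swapping the roles of the two coordinates, with witnesses $(x,y+1)$, $(x-1,y-1)$ and $(x+1,y-1)$. Boundary situations, such as the connector $(2,3)$ spawned by Case~3.1, are handled by direct inspection, using the initialization row $A=\{(1,2),\dots,(m,2)\}$ and the horizontal segment $\{(2,3),\dots,(2,n)\}$ built up by Case~2 as dominator witnesses. I expect the main obstacle to be bookkeeping this case analysis uniformly: at each of the witness positions both connector patterns must be ruled out, and the exclusion depends on whether the connector at hand arose from Case~3.1, Case~3.2, or Case~3.3, and on its exact position in the L-shape that the routine has built.
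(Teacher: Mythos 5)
Your witness selection is essentially the paper's: for a (c-i)-type connector $c=(x,y)$ the paper also covers the five dominated vertices by the regular neighbours $(x+1,y)$, $(x-1,y)$, $(x-1,y-1)$, $(x-1,y+1)$ and rules out the connector/pre-connector patterns at those positions using the hole $(x,y-1)\notin R$ together with (Q2) and (Q7) (resp.\ (P2), (P7)). The difference is organizational, and it is exactly where your proposal has a gap. The paper proves the lemma by induction over the executions of the routine: a connector is born only at the step where a pre-connector created by Case~3.1/3.2/3.3 is promoted by the following Case~1 or~2, and \emph{at that moment} the connector sits on the current frontier, so the non-membership clauses (Q2),(Q7) or (P2),(P7) of the current $(p',q')$-$(p,q)$-regularity apply verbatim to its neighbourhood (e.g.\ $(p_t-2,q_t),(p_t-2,q_t-1),(p_t-2,q_t-2)\notin R(D_{t+1})$ by (Q7) with $p'=p_t-1$); the conclusion then persists because regularized vertices are never touched again. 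Your static argument on $D_\tau$ defers the decisive exclusions --- (c-ii) at $(x-1,y-1)$ and at $(x-1,y+1)$, and their analogues for (c-ii)-type connectors --- to ``combining Lemma~\ref{LMA:PQ} with the non-membership clauses among (Q2)--(Q9) and (P2)--(P9)''. But those clauses are indexed by a single frontier position, and for $D_\tau$ they only describe the neighbourhood of the \emph{final} frontier $(p'_\tau,q'_\tau)$-$(p_\tau,q_\tau)$; they say nothing directly about the neighbourhood of a connector created many steps earlier, deep inside the regular region. To close this you must add the persistence argument: identify the stage at which the given connector was created, invoke the frontier conditions of \emph{that} stage, and argue that the relevant positions were already regular then and hence their membership in $D$ is frozen for the rest of the routine. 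That reconstruction is the actual content of the proof, and as written your proposal asserts rather than supplies it.

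Two smaller points. First, your claim that both patterns at $(x-1,y+1)$ are non-trivial is off: (c-i) there requires $(x-1,y)\notin R(D_\tau)$, which contradicts the hypothesis $(x-1,y)\in R(D_\tau)$ of (c-i) for $c$, so only (c-ii) needs work at that position. Second, your reduction ``no pre-connectors in $D_\tau$, hence it suffices to exclude the connector patterns at each witness'' is a legitimate small simplification over the paper, which excludes both patterns explicitly; but it does not remove the main difficulty above.
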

\begin{proof}
	\ifnum \count10 > 0
	%
	%%%%%%%%%%%%%%
	%
	\fi
	\ifnum \count11 > 0
	%
	%\com{%%%%%}
	%
	Suppose that the routine is feasible. 
	We prove the statement of this lemma by induction on the number $j$ of times that the routine is executed. 
	When $j = 0$, 
	that is, 
	at the initialization of the routine, 
	$D_{0}$ is $(0, 2)$-$(1, 2)$-regular. 
	At this point, 
	only $(2, 1)$ is regular and $D_{0}$ does not contain a connector. 
	Thus, 
	the statement is true
	(Fig.~\ref{fig:ldr}(1)). 
	We assume that 
	the statement is true for the $j = t$-th execution of the routine and 
	show that it is also true for the $t+1$-st execution. 
	We will discuss each Case executed by the routine for the $t+1$-st execution. 
	Note that only irregular vertices are handled at the regularization, 
	and regular vertices are not removed from $D_{t}$ when the routine constructs $D_{t+1}$. 
	\noindent
	{\bf Case~1:} 
	First, 
	we consider the case in which the routine executes Case~1 for the $t+1$-st execution. 
	In this case, 
	$D_{t+1}$ is $(p'_{t}, q'_{t})$-$(p_{t}+1, q_{t})$-regular. 
	If either $t = 0$ or 
	$p_{t} > p'_{t}+1$, 
	then connectors in $D_{t+1}$ are equal to those in $D_{t}$ 
	and thus, 
	the statement is true by the induction hypothesis. 
	Second, 
	we consider the case in which 
	$t \ne 0$
	and 
	$p_{t} = p'_{t}+1$, 
	that is, 
	$p_{t+1} = p'_{t}+2$
	(Fig.~\ref{fig:ldr}(10)). 
	By definition, 
	$(p_{t}, q_{t})$ is a pre-connector in $D_{t}$ and is a connector in $D_{t+1}$. 
	$(p_{t}, q_{t})$ dominates the five vertices $(p_{t}, q_{t}), (p_{t}-1, q_{t}), (p_{t}, q_{t}-1), (p_{t}, q_{t}+1)$ and $(p_{t}+1, q_{t})$. 
	Also, 
	$(p_{t}, q_{t})$ is a connector and 
	the four vertices $(p_{t}+1, q_{t}), (p_{t}-1, q_{t}), (p_{t}-1, q_{t}-1)$ and $(p_{t}-1, q_{t}+1)$ belong to $D_{t+1}$. 
	The four vertices dominate all the vertices which $(p_{t}, q_{t})$ dominates. 
	Thus, 
	it suffices to show that 
	the four vertices are dominators. 
	Since $(p_{t}, q_{t}-1) \notin R(D_{t+1})$ by (Q2) in the regularity condition, 
	$(p_{t}+1, q_{t})$ satisfies the condition of neither a connector nor a pre-connector, and 
	is neither of them. 
	Similarly, 
	$(p_{t}-1, q_{t}+1)$ ($(p_{t}-1, q_{t}), (p_{t}-1, q_{t}-1)$) is neither a connector nor a pre-connector 
	because $(p_{t}-2, q_{t}), (p_{t}-2, q_{t}-1), (p_{t}-2, q_{t}-2) \notin R(D_{t+1})$
	by the condition (Q7). 
	Thus, 
	vertices which $(p_{t}, q_{t})$ dominates are dominated by dominators. 
	The other connectors except for $(p_{t}, q_{t})$ do not change, 
	and thus, the statement is true by the induction hypothesis. 
	\noindent
	{\bf Case~2:} 
	We prove this case similarly to the proof of Case~1. 
	When the routine executes Case~2 for the $t+1$-st execution, 
	$D_{t+1}$ is $(p'_{t}, q'_{t})$-$(p_{t}, q_{t}+1)$-regular. 
	If $q_{t} > q'_{t}+1$, 
	then connectors in $D_{t+1}$ are equal to those in $D_{t}$ 
	and thus, 
	the statement is true by the induction hypothesis. 
	Second, 
	we consider the case in which 
	$q_{t} = q'_{t}+1$, 
	that is, 
	$q_{t+1} = q'_{t}+2$
	(Fig.~\ref{fig:ldr}(14)). 
	By definition, 
	$(p_{t}, q_{t})$ is a pre-connector in $D_{t}$ and is a connector in $D_{t+1}$. 
	$(p_{t}, q_{t})$ dominates the five vertices $(p_{t}, q_{t}), (p_{t}-1, q_{t}), (p_{t}, q_{t}-1), (p_{t}, q_{t}+1)$ and $(p_{t}+1, q_{t})$. 
	Also, 
	$(p_{t}, q_{t})$ is a connector and 
	the four vertices $(p_{t}+1, q_{t}), (p_{t}-1, q_{t}), (p_{t}-1, q_{t}-1)$ and $(p_{t}-1, q_{t}+1)$ belong to $D_{t+1}$. 
	The four vertices dominate all the vertices which $(p_{t}, q_{t})$ dominates. 
	Thus, 
	it suffices to show that 
	the four vertices are dominators. 
	Since $(p_{t}-1, q_{t}) \notin R(D_{t+1})$ by (P2) in the regularity condition, 
	$(p_{t}, q_{t}+1)$ satisfies the condition of neither a connector nor a pre-connector, and 
	is neither of them. 
	Similarly, 
	$(p_{t}+1, q_{t}-1)$ ($(p_{t}, q_{t}-1), (p_{t}-1, q_{t}-1)$) is neither a connector nor a pre-connector 
	because $(p_{t}, q_{t}-2), (p_{t}-1, q_{t}-2), (p_{t}-2, q_{t}-2) \notin R(D_{t+1})$ 
	by the condition (P7). 
	Thus, 
	vertices which $(p_{t}, q_{t})$ dominates are dominated by dominators. 
	The other connectors except for $(p_{t}, q_{t})$ do not change, 
	and thus, the statement is true by the induction hypothesis. 
	\noindent
	{\bf Case~3.1, 3.2 or 3.3:} 
	If the routine executes Case~3.1, 3.2 or 3.3, 
	then a regular vertex newly add into $D_{t+1}$ is only a pre-connector and dominators do not change (Fig.~\ref{fig:ldr}(4),(9),(13)). 
	Thus, 
	the statement is true by the induction hypothesis. 
	\noindent
	{\bf Case~3.4:} 
	If the routine executes Case~3.4, 
	then the routine does nothing and thus, 
	the statement is true by the induction hypothesis. 
	We have shown that the statement is also true for the $t+1$-st execution. 
	\fi
\end{proof}
\ifnum \count10 > 0
%
%%%\com{%%%%%%}
%
%%%%%%%%%%%%%%
%
\fi
\ifnum \count11 > 0
%
%%%\com{%%%%%}
%

%
\fi
%

%
%%%LDR
\begin{LMA}\label{LMA:IR}
	\ifnum \count10 > 0
	%
	%%%%%%%%%%%%%%
	%
	\fi
	\ifnum \count11 > 0
	%
	%\com{%%%%%}
	%
	Suppose that the routine is feasible. 
	Then, 
	\[
		{a}_{m,n} = (m \mod 3) \cdot (n \mod 3)
	\]
	and 
	\[
		\bar{r}_{m,n} = 
			\begin{cases}
					3 & \hspace{10mm} (m \mod 3) \cdot (n \mod 3) = 4 \\
					2 & \hspace{10mm} (m \mod 3) \cdot (n \mod 3) = 2 \\
					1 & \hspace{10mm} (m \mod 3) \cdot (n \mod 3) = 1 \\
					0 & \hspace{10mm} (m \mod 3) \cdot (n \mod 3) = 0. 
			\end{cases}
	\]
	\fi
\end{LMA}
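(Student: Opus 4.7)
The plan is to use Lemma~\ref{LMA:PQ} to pin down the state of $D_\tau$ at termination, then proceed by case analysis on $(m \bmod 3, n \bmod 3)$ and count directly.

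By Lemma~\ref{LMA:PQ}(ii), since $\tau = t_k \geq t_1$, we have $p'_\tau \equiv 2 \pmod{3}$ and $q'_\tau \equiv 2 \pmod{3}$. Combined with the three sub-conditions of Case~3.4 that triggered termination, I would first determine the exact values of $p'_\tau, p_\tau, q'_\tau, q_\tau$ for each residue class of $m$ and $n$. If $m \equiv 0 \pmod{3}$ then the sub-condition $p' = p = m-1$ is realized (since $m-1 \equiv 2$), and analogously if $n \equiv 0$; when both residues are nonzero, termination must occur via the third sub-condition, which forces $p'_\tau = m-2$ if $m \equiv 1$ and $p'_\tau = m-3$ if $m \equiv 2$, with the analogous statements for $q'_\tau$.

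Once the termination parameters are pinned down, I would read off from the regularity definitions in Section~\ref{sec:regularity} the explicit set of regular vertices in $V$ and their membership in $D_\tau$. The vertices of $V$ not dominated by $R(D_\tau)$ form a small residual rectangle anchored at the corner opposite $(1,1)$; a direct count shows this residual has exactly $(m \bmod 3) \cdot (n \bmod 3)$ vertices, yielding the claimed formula for $a_{m,n}$.

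For $\bar{r}_{m,n}$, since $D_\tau$ is still a CDS it must dominate every vertex of the residual and remain connected to the regular backbone. Case by case: when the product of residues is $0$ the residual is empty and $\bar{r}_{m,n} = 0$; for $(1,1)$ a single irregular vertex placed in the residual corner both dominates it and reattaches to the regular part, forcing $\bar{r}_{m,n} = 1$; for $(1,2)$ or $(2,1)$ the $1 \times 2$ (resp.\ $2 \times 1$) residual requires two irregular vertices to cover it and bridge back, giving $\bar{r}_{m,n} = 2$; and for $(2,2)$ three irregular vertices in an L-shaped configuration are needed to dominate all four residual vertices while reconnecting, giving $\bar{r}_{m,n} = 3$. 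Matching upper bounds follow from the minimality of $D_\tau$ as an MCDS together with the explicit CDS instances from Lemma~\ref{LMA:UB}.

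The main obstacle is the product-$4$ case: I must rule out any configuration of fewer than three irregular vertices covering the $2 \times 2$ residual and restoring connectivity. This will require a careful enumeration of where such vertices could sit against the fixed regular boundary at termination, eliminating every two-vertex candidate by exhibiting either a residual vertex it leaves undominated or a disconnection it creates in the induced subgraph on $D_\tau$.
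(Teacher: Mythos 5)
Your proposal follows essentially the same route as the paper's proof: both use Lemma~\ref{LMA:PQ}(ii) together with the termination conditions of Case~3.4 to pin down $(p'_\tau, q'_\tau)$ for each residue class, read off the undominated residual block of $(m \bmod 3)\cdot(n \bmod 3)$ vertices in the corner opposite $(1,1)$, and then count the irregular vertices needed to dominate that residual while preserving connectivity. The one step you single out as the main obstacle --- ruling out a two-vertex cover of the $2\times 2$ residual in the product-$4$ case via a connectivity argument --- is precisely the step the paper itself asserts without detail (``we need three vertices to dominate \dots''), so your plan is, if anything, more explicit there.
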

\begin{proof}
	\ifnum \count10 > 0
	%
	%%%%%%%%%%%%%%
	%
	\fi
	\ifnum \count11 > 0
	%
	%\com{%%%%%}
	%
	Suppose that the routine is feasible. 
	If the routine executes Case~3.4 and is finished, 
	then one of the following conditions holds at that time: 
	(a) 
	$p'_{\tau} = p_{\tau} = m-1$, 
	(b)
	$q'_{\tau} = q_{\tau} = n-1$ and 
	(c)
	both $m-3 \leq p'_{\tau} \leq m-2$
	and 
	$n-3 \leq q'_{\tau} \leq n-2$. 
	In what follows, 
	we evaluate the values of $a_{m,n}$ and $\bar{r}_{m,n}$ 
	for each of these three cases. 
	\noindent
	{\bf (a):}
	First, 
	we consider the case of $p'_{\tau} = p_{\tau} = m-1$. 
	It follows from (ii) in Lemma~\ref{LMA:PQ} 
	that $(p'_{\tau} \mod 3) = 2$, 
	that is, 
	$(p'_{\tau} + 1 \mod 3) = 0$
	(Fig.~\ref{fig:lir}). 
	Thus, 
	$(m \mod 3) = 0$ by the above equalities. 
	That is, 
	\[
		(m \mod 3) \cdot (n \mod 3) = 0. 
	\]
	On the other hand, 
	no irregular vertex exists when the routine is finished 
	because $D_{\tau}$ is $(m-1, q'_{\tau})$-$(m-1, n)$-regular. 
	Hence, 
	\[
		\bar{r}_{m,n} = 0. 
	\]
	Also, 
	since all the vertices are dominated by regular vertices in $D_{\tau}$, 
	we have 
	\[
		a_{m,n} = 0, 
	\]
	which implies that 
	\[
		a_{m,n} = (m \mod 3) \cdot (n \mod 3). 
	\]
	\noindent
	{\bf (b):}
	We can prove the case (b) similarly to the case (a). 
	It follows from (ii) in Lemma~\ref{LMA:PQ} 
	that $(q'_{\tau} \mod 3) = 2$, 
	that is, 
	$(q'_{\tau} + 1 \mod 3) = 0$. 
	Thus, 
	$(n \mod 3) = 0$ by the above equalities. 
	That is, 
	\[
		(m \mod 3) \cdot (n \mod 3) = 0. 
	\]
	On the other hand, 
	no irregular vertex exists when the routine is finished 
	because $D_{\tau}$ is $(p'_{\tau}, n-1)$-$(m, n-1)$-regular. 
	Hence, 
	\[
		\bar{r}_{m,n} = 0. 
	\]
	Also, 
	since all the vertices are dominated by regular vertices in $D_{\tau}$, 
	we have 
	\[
		a_{m,n} = 0, 
	\]
	which implies that 
	\[
		a_{m,n} = (m \mod 3) \cdot (n \mod 3). 
	\]
	\noindent
	{\bf (c):}
	Since $(p'_{\tau} \mod 3) = 2$ by (ii) in Lemma~\ref{LMA:PQ},  
	$(p'_{\tau} + 1 \mod 3) = 0$. 
	Hence, 
	if $p'_{\tau} = m-3$, 
	then 
	$(m-3 + 1 \mod 3) = (m-2 \mod 3) = 0$, 
	that is, 
	$(m \mod 3) = 2$. 
	If $p'_{\tau} = m-2$, 
	then
	$(m \mod 3) = 1$. 
	Also, 
	$(q'_{\tau} + 1 \mod 3) = 0$ by (ii) in Lemma~\ref{LMA:PQ}. 
	Hence, 
	if $q'_{\tau} = n-3$, 
	then 
	$(n \mod 3) = 2$. 
	If $q'_{\tau} = n-2$, 
	then
	$(n \mod 3) = 1$. 
	Let us call these facts Fact~(A). 
	In what follows, 
	we discuss each case for $m$ and $n$. 
	\noindent
	{\bf\boldmath (c-1) $p'_{\tau} = m-3$ and $q'_{\tau} = n-3$:}
	By Fact~(A), 
	\[
		(m \mod 3) \cdot (n \mod 3) = 2 \cdot 2 = 4. 
	\]
	Then, 
	$D_{\tau}$ is 
	$(m-3, n-3)$-$(m-3, n)$-regular or 
	$(m-3, n-3)$-$(m, n-3)$-regular. 
	Thus, 
	the irregular vertices in $D_{\tau}$ are 
	$
		(m-2, n-2), (m-2, n-1), (m-2, n), 
		(m-1, n-2), (m-1, n-1), (m-1, n), 
		(m, n-2), (m, n-1)$ and $(m, n)
	$ 
	by definition. 
	The vertices which are not dominated by regular vertices in $D_{\tau}$ are 
	the four vertices $(m-1, n-1), (m-1, n), (m, n-1)$ and $(m, n)$, 
	which implies that 
	\[
		a_{m,n} = 4. 
	\]
	Hence, 
	\[
		a_{m,n} = (m \mod 3) \cdot (n \mod 3). 
	\]
	Additionally, 
	$(m-3,n-3), 
	(m-3,n-2), (m-3,n-1), (m-3,n), 
	(m-2,n-3), (m-1,n-3), (m,n-3) 
	\in D_{\tau}$ 
	because $D_{\tau}$ is 
	$(m-3, n-3)$-$(m-3, n)$-regular or 
	$(m-3, n-3)$-$(m, n-3)$-regular. 
	We need three vertices to dominate 
	the vertices $(m-1, n-1), (m-1, n), (m, n-1)$ and $(m, n)$, 
	and hence 
	\[
		\bar{r}_{m,n} = 3. 
	\]
	\noindent
	{\bf\boldmath (c-2) $p'_{\tau} = m-3$ and $q'_{\tau} = n-2$:}
	By Fact~(A), 
	\[
		(m \mod 3) \cdot (n \mod 3) = 2 \cdot 1 = 2. 
	\]
	Then, 
	$D_{\tau}$ is 
	$(m-3, n-2)$-$(m-3, n)$-regular or 
	$(m-3, n-2)$-$(m, n-2)$-regular. 
	Thus, 
	the irregular vertices in $D_{\tau}$ are 
	$ 
		(m-2, n-1), (m-2, n), 
		(m-1, n-1), (m-1, n), 
		(m, n-1)$ and $(m, n)
	$ 
	by definition. 
	The vertices which are not dominated by regular vertices in $D_{\tau}$ are 
	the two vertices $(m-1, n)$ and $(m, n)$, 
	which implies that 
	\[
		a_{m,n} = 2. 
	\]
	Hence, 
	\[
		a_{m,n} = (m \mod 3) \cdot (n \mod 3). 
	\]
	Additionally, 
	$(m-3,n-2), 
	(m-3,n-1), (m-3,n), 
	(m-2,n-2), (m-1,n-2), (m,n-2) 
	\in D_{\tau}$ 
	because $D_{\tau}$ is 
	$D_{\tau}$ is 
	$(m-3, n-2)$-$(m-3, n)$-regular or 
	$(m-3, n-2)$-$(m, n-2)$-regular. 
	We need two vertices to dominate 
	the vertices $(m-1, n)$ and $(m, n)$, 
	and hence 
	\[
		\bar{r}_{m,n} = 2. 
	\]
	\noindent
	{\bf\boldmath (c-3) $p'_{\tau} = m-2$ and $q'_{\tau} = n-3$:}
	By Fact~(A), 
	\[
		(m \mod 3) \cdot (n \mod 3) = 1 \cdot 2 = 2. 
	\]
	Then, 
	$D_{\tau}$ is 
	$(m-2, n-3)$-$(m-2, n)$-regular or 
	$(m-2, n-3)$-$(m, n-3)$-regular. 
	Thus, 
	the irregular vertices in $D_{\tau}$ are 
	$ 
		(m-1, n-2), (m-1, n-1), (m-1, n), 
		(m, n-2), (m, n-1)$ and $(m, n)
	$ 
	by definition. 
	The vertices which are not dominated by regular vertices in $D_{\tau}$ are 
	the two vertices $(m, n-1)$ and $(m, n)$, 
	which implies that 
	\[
		a_{m,n} = 2. 
	\]
	Hence, 
	\[
		a_{m,n} = (m \mod 3) \cdot (n \mod 3). 
	\]
	Additionally, 
	$(m-2,n-3), 
	(m-2,n-2), (m-2,n-1), (m-2,n), 
	(m-2,n-3), (m-1,n-3), (m,n-3)
	\in D_{\tau}$ 
	because $D_{\tau}$ is 
	$D_{\tau}$ is 
	$(m-2, n-3)$-$(m-2, n)$-regular or 
	$(m-2, n-3)$-$(m, n-3)$-regular. 
	We need two vertices to dominate 
	the vertices $(m, n-1)$ and $(m, n)$, 
	and hence 
	\[
		\bar{r}_{m,n} = 2. 
	\]
	\noindent
	{\bf\boldmath (c-4) $p'_{\tau} = m-2$ and $q'_{\tau} = n-2$:}
	By Fact~(A), 
	\[
		(m \mod 3) \cdot (n \mod 3) = 1 \cdot 1 = 1. 
	\]
	Then, 
	$D_{\tau}$ is 
	$(m-2, n-2)$-$(m-2, n)$-regular or 
	$(m-2, n-2)$-$(m, n-2)$-regular. 
	Thus, 
	the irregular vertices in $D_{\tau}$ are 
	$
		(m-1, n-1), (m-1, n), 
		(m, n-1)$ and $(m, n)
	$ 
	by definition. 
	The vertex which are not dominated by regular vertices in $D_{\tau}$ is 
	the one vertex $(m, n)$, 
	which implies that 
	\[
		a_{m,n} = 1. 
	\]
	Hence, 
	\[
		a_{m,n} = (m \mod 3) \cdot (n \mod 3). 
	\]
	Additionally, 
	$(m-2,n-2), 
	(m-2,n-1), (m-2,n), 
	(m-1,n-2), (m,n-2) 
	\in D_{\tau}$ 
	because $D_{\tau}$ is 
	$D_{\tau}$ is 
	$(m-2, n-2)$-$(m-2, n)$-regular or 
	$(m-2, n-2)$-$(m, n-2)$-regular. 
	We need one vertex to dominate 
	the vertex $(m, n)$, 
	and hence 
	\[
		\bar{r}_{m,n} = 1. 
	\]
	\fi
\end{proof}
\ifnum \count12 > 0
\begin{figure*}
	 \begin{center}
	  \includegraphics[width=\linewidth]{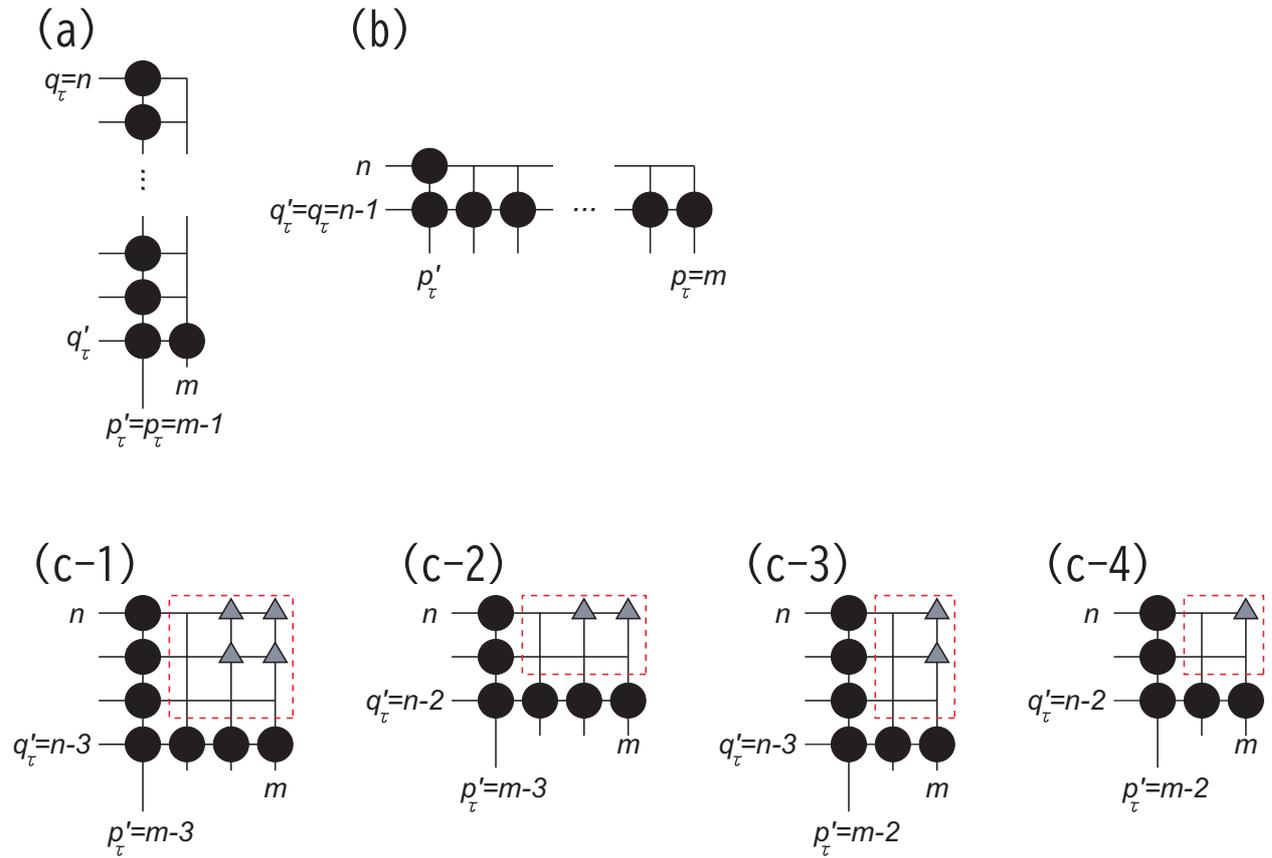}
	 \end{center}
	 \caption{
\ifnum \count10 > 0
\fi
\ifnum \count11 > 0
%
%\com{%%%%%}
%
Vertices which are not dominated by regular vertices and irregular vertices when the routine is finished. 
Black circles denote vertices in the MCDS $D_{\tau}$. 
Vertices enclosed by dashed lines are irregular and the other vertices are regular. 
In (a) and (b), there do not exist irregular vertices. 
Gray triangles denote vertices which are not dominated by regular vertices in $D_{\tau}$. 
\fi
			}
	\label{fig:lir}
\end{figure*}
\fi
\ifnum \count10 > 0
%
%%%\com{%%%%%%}
%

%
\fi
\ifnum \count11 > 0
%
%%%\com{%%%%%}
%

%
\fi
%
%%%LCR1
\begin{LMA}\label{LMA:CR1}
	\ifnum \count10 > 0
	\fi
	\ifnum \count11 > 0
	%
	%\com{%%%%%}
	%
	If the routine is feasible, then 
	\[
		{c}_{m,n} \geq 
			\begin{cases}
				\min\{ \frac{m}{3}, \frac{n}{3} \} & (m \mod 3) = 0   \mbox{ and } (n \mod 3) = 0 \\
				\frac{m}{3} & (m \mod 3) = 0   \mbox{ and } (n \mod 3) \ne 0 \\
				\frac{n}{3} & (m \mod 3) \ne 0 \mbox{ and } (n \mod 3) = 0 \\
				\lfloor \frac{m}{3} \rfloor + \lfloor \frac{n}{3} \rfloor - 1 & (m \mod 3) \ne 0 \mbox{ and } (n \mod 3) \ne 0. \\
			\end{cases}
	\]
	\fi
\end{LMA}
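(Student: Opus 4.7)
The plan is to combine two basic facts: (A) the connectors in $D_\tau$ are in bijection with the invocations of Cases~3.1, 3.2, and 3.3 during the run of the routine, and (B) the residues of $p'_i$ and $q'_i$ modulo~$3$ are pinned down, as recorded in Lemma~\ref{LMA:PQ}. From~(A) we obtain $c_{m,n} = k-1$, where $k$ is the total number of Case~3 executions (Case~3.4 being the terminal step that produces no new pre-connector). From~(B), after step $t_1$ every $p'_i$ and $q'_i$ is congruent to $2 \pmod 3$, and between successive Case~3 steps either $p'$ is fixed (if Case~3.2 fires) or $q'$ is fixed (if Case~3.3 fires), with the changing coordinate jumping by~$3$.

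First I would establish~(A) rigorously. The inductive argument of Lemma~\ref{LMA:PCR} already shows that each execution of Case~3.1, 3.2, or~3.3 inserts exactly one pre-connector, which is converted to a connector by the next Case~1 or~2 step. I would supplement this with the observation that regular vertices are never removed from $D$ during regularization, and that the defining conditions (c-i)/(c-ii) for a connector persist once established: the unique neighbor that must remain outside $R(D)$ lies inside the ``irregular strip'' of the current regularity (by~(Q2) or~(P2)), hence stays outside $R(D_\tau)$ as the regularized frontier advances. This yields $c_{m,n} = k-1 = 1 + a + b$, where $a$ counts Case~3.3 executions and $b$ counts Case~3.2 executions among steps $t_2, \ldots, t_{k-1}$.

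Next I would do a case split on $(m \bmod 3,\, n \bmod 3)$ using the terminal state $(p'_{t_k-1}, q'_{t_k-1}) = (2 + 3a,\, 2 + 3b)$ together with the termination condition of Case~3.4. Because $p'_{t_k-1} \equiv 2 \pmod 3$, the sub-condition $p' = m-1$ is compatible only with $m \equiv 0$, the sub-condition $p' = m-2$ only with $m \equiv 1$, and $p' = m-3$ only with $m \equiv 2$, and similarly for $q'$. Reading off which sub-condition of Case~3.4 is available in each residue pair pins down $a$ and $b$: in the all-zero case either $a = m/3 - 1$ or $b = n/3 - 1$, giving $c_{m,n} \geq \min\{m/3, n/3\}$; when only $m$ is divisible by~$3$ only the first termination option is viable and $c_{m,n} \geq m/3$; symmetrically for $n$; and when neither residue is~$0$ the third sub-condition of Case~3.4 is forced, which determines both $a = \lfloor m/3\rfloor - 1$ and $b = \lfloor n/3\rfloor - 1$, giving the equality $c_{m,n} = \lfloor m/3\rfloor + \lfloor n/3\rfloor - 1$.

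I expect the main obstacle to be the rigorous justification of fact~(A): showing that once a pre-connector is upgraded to a connector it cannot be reclassified later as a dominator (or fail the connector conditions entirely), and that no Case~1 or Case~2 execution other than the one immediately following a Case~3 step manufactures an additional connector. The modular case analysis in the second step, by contrast, is mechanical once Lemma~\ref{LMA:PQ} is available.
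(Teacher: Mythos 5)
Your proposal follows essentially the same route as the paper's own proof: it counts connectors by equating them with the executions of Cases~3.1, 3.2 and 3.3 (one Case~3.1 execution plus the Case~3.2 and Case~3.3 counts), and then uses the residue constraints $p'_i \equiv q'_i \equiv 2 \pmod 3$ from Lemma~\ref{LMA:PQ} together with the Case~3.4 termination conditions to pin down those counts in each residue class of $(m,n)$. The supplementary persistence argument you flag for fact~(A) is a reasonable patch for a point the paper also leaves largely to the induction of Lemma~\ref{LMA:PCR}, but it does not change the overall structure, which matches the paper's.
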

\begin{proof}
	\ifnum \count10 > 0
	%
	%%%%%%%%%%%%%%
	%
	\fi
	\ifnum \count11 > 0
	%
	%\com{%%%%%}
	%
	Suppose that the routine is feasible. 
	The routine executes Case~3.1, 3.2 or 3.3 for the $i = t_{j} \hspace{1mm} (j = 1, \ldots, k-1)$-th execution by definition. 
	If the routine executes Case~3.1, 
	then the vertex $(p'_{t_{1}}, q'_{t_{1}}+1) (= (2, 3))$ becomes a pre-connector
	(Fig.~\ref{fig:ldr}(4)). 
	If the routine executes Case~3.2, 
	then $(p'_{t_{j}}+1, q'_{t_{j}}+3)$ becomes a pre-connector
	(Fig.~\ref{fig:ldr}(9)). 
	If the routine executes Case~3.3, 
	then $(p'_{t_{j}}+3, q'_{t_{j}}+1)$ becomes a pre-connector
	(Fig.~\ref{fig:ldr}(13)). 
	Also, 
	the routine executes Case~1 or 2 for the $i = t_{j} + 1\hspace{1mm} (j = 1, \ldots, k-1)$-st execution. 
	If the routine executes Case~1 for $t_{j} + 1$-st, 
	then $(p_{t_{j}}, q_{t_{j}}) (= (p_{t_{j}+1}-1, q_{t_{j}+1}))$ which is a pre-connector becomes a connector. 
	If the routine executes Case~2, 
	$(p_{t_{j}}, q_{t_{j}}) (= (p_{t_{j}+1}, q_{t_{j}+1}-1))$ which is a pre-connector becomes a connector. 
	Hence, 
	the number of connectors at the time when the routine is finished is equal to the number of times that the routine executes Case~3.1, 3.2 or 3.3. 
	Let $c_{1}$ ($c_{2},c_{3}$, respectively) denote the number of times that the routine executes Case~3.1 (3.2, 3.3, respectively) 
	by the time when the routine is finished. 
	By definition,  
	\[
		c_{m,n} = c_{1} + c_{2} + c_{3}. 
	\]
	In what follows, 
	we show the inequalities in the statement by evaluating lower bounds on $c_{1},c_{2}$ and $c_{3}$. 
	The routine executes Case~1 for the $i \in [1, t_{1}-1]$-th execution. 
	Since we assume that $m \geq 4$ and $n \geq 4$ throughout this paper, 
	the routine executes Case~3.1 for the $t_{1}$-th execution. 
	Also, 
	the routine does not execute Case~3.1 after the $t_{1}$-th execution 
	because $(p'_{i} \mod 3) = 2$ for any $i > t_{1}$ by (ii) in Lemma~\ref{LMA:PQ}.
	Thus, 
	$c_{1} = 1$ holds. 
	At the time when the routine executes Case~3.4, 
	one of the following three conditions holds: 
	(a) 
	$p'_{\tau} = p_{\tau} = m-1$, 
	(b)
	$q'_{\tau} = q_{\tau} = n-1$ and 
	(c)
	both $m-3 \leq p'_{\tau} \leq m-2$
	and 
	$n-3 \leq q'_{\tau} \leq n-2$. 
	We evaluate lower bounds on $c_{2}$ and $c_{3}$ for each case of the three cases. 
	\noindent
	{\bf (a):}
	When the routine executes Case~3.1 for the $t_{1}$-th execution, 
	$p'_{t_{1}} = 2$ by the condition of the routine. 
	Using (ii) in Lemma~\ref{LMA:PQ}, 
	we have for any $i > t_{1}$, 
	$(p'_{i} \mod 3) = 2$. 
	Only if the routine executes Case~3.3, 
	the value of $p'_{i}$ increases by three. 
	For the execution of the other cases, 
	it does not change. 
	Moreover, 
	$p'_{\tau} = m-1$
	by the condition of (a). 
	Hence, 
	the number of times that the routine executes Case~3.3 is 
	\[
		c_{3} = \frac{m-1 - 2}{3} = \frac{m}{3} - 1.
	\]
	Therefore, 
	\begin{equation} \label{LMA:CR1:eq.1}
		c_{m,n} \geq c_{1} + c_{3} = 1 + \frac{m}{3} - 1 = \frac{m}{3}. 
	\end{equation}
	\noindent
	{\bf (b):}
	We can prove this case similarly to that of (a). 
	When the routine executes Case~3.1 for the $t_{1}$-th execution, 
	$q'_{t_{1}} = 2$ by the condition of the routine. 
	Using (ii) in Lemma~\ref{LMA:PQ}, 
	we have for any $i > t_{1}$, 
	Only if the routine executes Case~3.2, 
	the value of $q'_{i}$ increases by three. 
	For the execution of the other cases, 
	it does not change. 
	Moreover, 
	$q'_{\tau} = n-1$
	by the condition of (b). 
	Hence, 
	the number of times that the routine executes Case~3.2 is 
	\[
		c_{2} = \frac{n-1 - 2}{3} = \frac{n}{3} - 1. 
	\]
	Therefore, 
	\begin{equation} \label{LMA:CR1:eq.2}
		c_{m,n} \geq c_{1} + c_{2} = 1 + \frac{n}{3} - 1 = \frac{n}{3}. 
	\end{equation}
	\noindent
	{\bf (c):}
	If $p'_{\tau} = m-3$ by the condition of (c),
	we can evaluate the number of times that the routine executes Case~3.3 similarly to the case (a): 
	\[
		c_{3} = \frac{m-3 - 2}{3} = \frac{m-2}{3} - 1.
	\]
	Since $(p'_{\tau}-2 \mod 3) = 0$ by (ii) in Lemma~\ref{LMA:PQ}, 
	$\frac{m-2}{3} = \lfloor \frac{m}{3} \rfloor$, 
	which implies that 
	\begin{equation} \label{LMA:CR1:eq.3}
		c_{3} = \frac{m-2}{3} - 1 = \lfloor \frac{m}{3} \rfloor  -  1. 
	\end{equation}
	Similarly, 
	if $p'_{\tau} = m-2$, 
	then 
	\begin{equation} \label{LMA:CR1:eq.4}
		c_{3} = \frac{m-2 - 2}{3} - 1 = \lfloor \frac{m}{3} \rfloor  -  1.
	\end{equation}
	If $q'_{\tau} = n-3$ by the condition of (c),
	we can evaluate the number of times that the routine executes Case~3.2 
	similarly to the case (b) and 
	it is 
	\[
		c_{2} = \frac{n-3 - 2}{3} = \frac{n-2}{3} - 1.
	\]
	Since $(q'_{\tau}-2 \mod 3) = 0$ by (ii) in Lemma~\ref{LMA:PQ}, 
	\begin{equation} \label{LMA:CR1:eq.5}
		c_{2} = \frac{n-2}{3} - 1 = \lfloor \frac{n}{3} \rfloor  -  1.
	\end{equation}
	Similarly, 
	if $q'_{\tau} = n-2$, 
	then 
	\begin{equation} \label{LMA:CR1:eq.6}
		c_{2} = \frac{n-2 - 2}{3} - 1 = \lfloor \frac{n}{3} \rfloor  -  1.
	\end{equation}
	By Eqs.~(\ref{LMA:CR1:eq.3}), (\ref{LMA:CR1:eq.4}), (\ref{LMA:CR1:eq.5}) and (\ref{LMA:CR1:eq.6}), 
	\begin{equation} \label{LMA:CR1:eq.7}
		c_{m,n} = c_{1} + c_{2} + c_{3} 
			= 1 + \lfloor \frac{m}{3} \rfloor  -  1 + \lfloor \frac{n}{3} \rfloor  -  1 
			= \lfloor \frac{m}{3} \rfloor + \lfloor \frac{n}{3} \rfloor - 1. 
	\end{equation}
	By the above argument, 
	if $(m \mod 3) = 0$ and $(n \mod 3) = 0$, 
	then $(m-1 \mod 3) = 2$ and $(n-1 \mod 3) = 2$. 
	Also, 
	either $p'_{\tau} = m-1$ or 
	$q'_{\tau} = n-1$ 
	because $(p'_{\tau} \mod 3) = 2$ and $(q'_{\tau} \mod 3) = 2$ by (ii) in Lemma~\ref{LMA:PQ}. 	
	Thus,  
	either (a) or (b) is true
	Then, 
	by Eqs.~(\ref{LMA:CR1:eq.1}) and (\ref{LMA:CR1:eq.2}), 
	\[
		c_{m,n} \geq \min \left \{ \frac{m}{3}, \frac{n}{3} \right \}.
	\]
	In the case in which both $(m \mod 3) = 0$ and $(n \mod 3) \ne 0$, 
	$p'_{\tau} = m-1$ holds, 
	which satisfies the condition of (a). 
	Thus, 
	by Eq.~(\ref{LMA:CR1:eq.1}), 
	\[
		c_{m,n} \geq \frac{m}{3}.
	\]
	In the case in which both $(m \mod 3) \ne 0$ and $(n \mod 3) = 0$, 
	$q'_{\tau} = n-1$ holds, 
	which satisfies the condition of (b). 
	Thus, 
	by Eq.~(\ref{LMA:CR1:eq.2}), 
	\[
		c_{m,n} \geq \frac{n}{3}.
	\]
	The case in which $(m \mod 3) \ne 0$ and $(n \mod 3) \ne 0$ 
	satisfies the condition of (c). 
	Thus, 
	by Eq.~(\ref{LMA:CR1:eq.7}), 
	\[
		c_{m,n} = \lfloor \frac{m}{3} \rfloor + \lfloor \frac{n}{3} \rfloor - 1.
	\]
	\fi
\end{proof}
\ifnum \count10 > 0
%
%%%\com{%%%%%%}
%

%
\fi
\ifnum \count11 > 0
%
%%%\com{%%%%%}
%

%
\fi
%

%
%%%%%%%%%%%%%%%%%%%%%%%%%%%%%%%%%%%%%%%%%%%%%%%%%%%%%%%%%%%%%%%%%%%%%%%%%%%%%%%%
%%%\subsection{%%%%%%%%%%} \label{sec:routinefeasibility}
\subsection{Routine Feasibility} \label{sec:routinefeasibility}
\ifnum \count10 > 0
\fi
\ifnum \count11 > 0
%
%\com{%%%%%}
%
In this section, 
we show that the routine is feasible and can obtain the MCDS $D_{\tau}$, 
of which properties we have shown in the previous section. 
Specifically, 
we will complete the proof of the lower bound lemma by showing that 
the routine can conduct regularization in each of Cases~1, 2, 3.1, 3.2 and 3.3. 
We give a definition for the following lemmas. 
For a CDS $D$, 
a vertex $(x, y) \in {\overline{R}}(D)$ is called a {\em mobile in $D$} 
if the vertex satisfies at least one of the following five conditions 
(see Fig.~\ref{fig:mv}): 
\begin{itemize}
	\itemsep=-2.0pt
	\setlength{\leftskip}{0pt}
	\item[(i)]
		$x \in [2, m-1]$, 
		$y \in [2, n-1]$
		and 
		$(x+1, y), (x-1, y), (x-1, y-1), (x-1, y+1) \in D$, 
	\item[(ii)]
		$x \in [2, m-1]$ 
		and 
		$(x+1, n), (x-1, n), (x-1, n-1) \in D$, 
	\item[(iii)]
		$x \in [2, m-1]$, 
		$y \in [2, n-1]$
		and 
		$(x, y+1), (x, y-1), (x-1, y-1), (x+1, y-1) \in D$, 
	\item[(iv)]
		$y \in [2, n-1]$
		and 
		$(m, y+1), (m, y-1), (m-1, y-1) \in D$, 
		and 
	\item[(v)]
		$(1, 4), (1, 2), (2, 2) \in D$. 
\end{itemize}
For a CDS $D$, 
when a vertex set $C$ is constructed by removing a mobile $v$ in $D$,  
$C$ may not be connected, but still remains dominating. 
Note that 
if we obtain a new vertex set by adding another vertex into $C$ instead of $v$ to be connected, 
then the new vertex set becomes a CDS different from $D$. 
\fi
\ifnum \count12 > 0
\begin{figure*}
	 \begin{center}
	  \includegraphics[width=\linewidth]{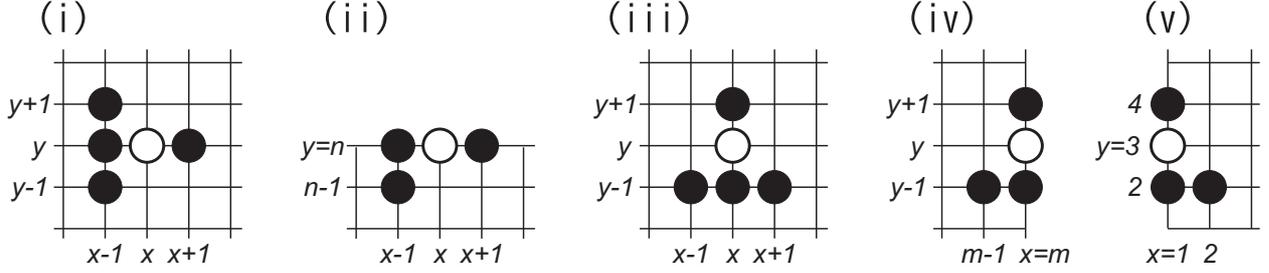}
	 \end{center}
	 \caption{
\ifnum \count10 > 0
%
%\com{%%%%%%}
%
%%%%%%%%%%%%%CDS%%%%%%
%%%%%%%%%%
%
%
\fi
\ifnum \count11 > 0
%
%\com{%%%%%}
%
Circles denote vertices in a CDS in the conditions of mobiles. 
White circles denote mobile vertices. 
\fi
			}
	\label{fig:mv}
\end{figure*}
\fi
%

%
%%%L2
\begin{LMA}\label{LMA:L2}
	\ifnum \count10 > 0
	%
	%%%%%%%%%%%%%%
	%
	\fi
	\ifnum \count11 > 0
	%
	%\com{%%%%%}
	%
	Suppose that $D$ is a $(p', q')$-$(p, q)$-regular CDS. 
	Also, 
	suppose that 
	for vertices $\overline{v} \in {\overline{R}}(D)$ and $u \in {R}(D)$, 
	there exists a simple path $P$ consisting of vertices in $D$ between $u$ and $\overline{v}$
	such that 
	(i) 
	the number of vertices in ${\overline{R}}(D)$ in $P$ is minimum, 
	(ii)
	$P$ does not contain $(p, q)$, 
	(iii)
	if $q' = q$, 
	then $P$ does not contain $(p, q+1)$ and 
	(iv)
	if $p' = p$, 
	then $P$ does not contain $(p+1, q)$. 
	Then, 
	$P$ contains a mobile in $D$. 
	\fi
\end{LMA}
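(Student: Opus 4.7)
The plan is to locate the first irregular vertex encountered along $P$ starting from the regular endpoint $u$, and to show by a case analysis on the local structure that it must satisfy one of the mobile conditions (i)--(v).

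Write $P = v_1 v_2 \cdots v_a$ with $v_1 = u$ and $v_a = \overline{v}$, and let $k = \min \{ i : v_i \in \overline{R}(D) \}$, so $k \geq 2$, $v_{k-1} \in R(D) \cap D$, and $v_{k-1}$ is adjacent to $v_k$ in the grid. I split on whether $D$ is $(p',q')$-$(p,q)$-regular with $q' = q$ and $p' \neq p$, or with $p' = p$ and $q' \neq q$; these two cases are mirror images by interchanging the roles of the coordinates, so I only treat the first. Within this case, I further split on where $v_{k-1}$ lies among the regular-in-$D$ vertices enumerated by (Q1), (Q5), (Q6) together with the lower-left regular block from the additional regularity definition. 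For each such position, the clauses (Q2)--(Q4) and (Q7)--(Q9) pin down which vertices around $v_{k-1}$ are, or are not, in $D$, and this is enough to check directly that the candidate mobile (taken to be $v_k$, or if $v_k$ itself fails, a vertex slightly further along $P$ whose existence is forced by connectivity of $D$) has the precise pattern of in-$D$ neighbors required by one of the five mobile conditions.

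The constraints (ii)--(iv) of the lemma statement are used to rule out exactly the tip of the regular staircase: (ii) forbids any vertex of $P$ from sitting at $(p,q)$, while (iii) and (iv) forbid $P$ from leaving the regular region through the immediate growth direction $(p,q+1)$ or $(p+1,q)$; at these positions the outgoing irregular neighbor would not itself be a mobile, and by excluding them we force $P$ to cross the boundary at an interior edge of the staircase, where a mobile is guaranteed. The minimality clause (i) is a safeguard against wandering: if a candidate $v_k$ failed to be a mobile, one could locally reroute $P$ through a vertex of $R(D) \cap D$ and reduce the count of irregular vertices on $P$, contradicting the choice of $P$.

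The principal obstacle is the length of the case analysis, especially in the boundary regimes $p' = 0$, $p = m$, $q = n$, or $q$ close to $4$, where some of the regularity conditions become vacuous or degenerate and the special mobile conditions (ii), (iv), (v) take the place of the generic (i), (iii). Care is also required to verify that the exclusions (ii)--(iv) are precisely the ones needed---no more, no less---so that whenever a path $P$ satisfying all four hypotheses exists, a mobile is genuinely forced onto it.
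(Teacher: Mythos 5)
Your plan coincides with the paper's own proof: both locate the first adjacent regular--irregular pair on $P$, case-split on where the regular vertex sits among the vertices guaranteed by (Q1), (Q5), (Q6) (the row $y=q$, the row $y=q-3$, and the column $x=p'$), use the remaining regularity clauses to show the only possible irregular neighbour is in one fixed position, and invoke the minimality of the irregular count on $P$ to force the extra in-$D$ neighbour that completes one of the mobile patterns (i)--(v), with the boundary variants handled exactly as you indicate. The roles you assign to hypotheses (i)--(iv) match the paper's usage, so the proposal is correct and essentially identical in approach, differing only in that the detailed case verifications are left unexecuted.
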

\begin{proof}
	\ifnum \count10 > 0
	%
	%%%%%%%%%%%%%%
	%
	\fi
	\ifnum \count11 > 0
	%
	%\com{%%%%%}
	%
	Let $D$ be a $(p', q')$-$(p, q)$-regular CDS. 
	Let $\overline{v} \in {\overline{R}}(D)$ and $u \in {R}(D)$ be vertices  
	such that 
	there exists a simple path $P$ consisting of vertices in $D$ between $u$ and $\overline{v}$ satisfying the four conditions in the statement of this lemma. 
	Since $\overline{v} \in {\overline{R}}(D)$ and $u \in {R}(D)$, 
	vertices 
	$\overline{v}' \in {\overline{R}}(D)$
	and 
	$u' \in {R}(D)$ 
	are contained in $P$
	such that $\overline{v}'$ and $u'$ are adjacent. 
	Suppose that $u' = (x, y)$. 
	Let us consider the case in which $q = q'$. 
	Since $u'$ is adjacent to $\overline{v}'$, which is irregular, 
	there exist three cases with respect to the position of $u'$
	(Fig.~\ref{fig:l2}): 
	(a) 
	both $y = q$ and $x \geq p'+1$, 
	(b) 
	both $y = q-3$ and $x \geq p+1$, 
	and 
	(c) 
	$y \geq q+1$, $x = p'$ and $p' \geq 2$. 
	We will discuss these three cases. 
	\noindent
	{\bf\boldmath (a) $y = q$ and $x \geq p'+1$:}
	Since the path $P$ does not contain $(p, q)$ by the definition of $P$, 
	$x \ne p$. 
	Thus, 
	$x \leq p-1$. 
	Let us discuss irregular vertices adjacent to $(x, q)$. 
	By the condition (Q2) of the regularity, 
	$(x, q-1) \notin R(D)$. 
	If $x \geq p'+2$, 
	then $(x-1, q) \in R(D)$
	by the condition (Q1). 
	If $x = p'+1$
	and 
	$q \geq 4$, 
	then $(x-1, q) \in R(D)$ by the condition (Q6). 
	If $x = p'+1$
	and 
	$q \leq 3$, 
	then $q=2$ and $p'=0$ 
	by (i) in Lemma~\ref{LMA:PQ}, 
	which implies that 
	$(x, q) (= (1, 2))$ does not have an adjacent vertex on the left side. 
	Also, 
	if $x + 1 \leq m$, 
	$(x+1, q) \in R(D)$ 
	by the condition (Q1). 
	Hence, 
	an irregular vertex which can be adjacent to $(x, q)$ is located at only $(x, q+1)$. 
	Then, 
	$(x, q+2) \in D$ 
	holds. 
	The reason is as follows: 
	Assume that 
	$(x, q+2) \notin D$. 
	Then, 
	since $P$ does not contain $(p, q+1)$ by definition, 
	there exists $j \in [x+1, p-1]$ such that 
	$(j, q+2) \in D$, and 
	$(x+1, q+1), \ldots, (j, q+1) \in D$ or 
	there exists $j' \leq x-1$ such that 
	$(j', q+2) \in D$, and 
	$(j', q+1), \ldots, (x-1, q+1) \in D$. 
	However, 
	since $P$ is selected such that 
	the number of irregular vertices in $P$ is minimized, 
	either 
	$(j, q) = u'$ 
	(i.e., $j = x$) 
	or 
	$(j', q) = u'$ 
	(i.e., $j' = x$), 
	which contradicts the definitions of $j$ and $j'$. 
	Thus, 
	$(x, q+2) \in D$ and, 
	hence, 
	$(x, q+1) = \overline{v}'$ 
	and 
	$\overline{v}' \in {\overline{R}}(D)$. 
	Then, 
	if $x = 1$, 
	then $\overline{v}'$ satisfies the definition (v) of a mobile, 
	and if $x = m$, 
	it satisfies the definition (iv). 
	Otherwise, 
	it satisfies the definition (iii). 
	\noindent
	{\bf\boldmath (b) $y = q-3$ and $x \geq p+1$:}
	Similarly to the case in which $y = q$, 
	we can show a mobile $\overline{v}' \in {\overline{R}}(D)$. 
	$(x, q-4) \notin R(D)$ 
	by the condition (Q8). 
	Since $x \geq p+1$ by the condition of (b) and $p \geq p'+1$, 
	$x \geq p'+2$. 
	Hence, 	
	$(x-1, q-3) \in R(D)$
	by the condition (Q5). 
	Also, 
	if $x + 1 \leq m$, 
	$(x+1, q-3) \in R(D)$. 
	Hence, 
	an irregular vertex which can be adjacent to $(x, q-3)$ is located at only $(x, q-2)$. 
	Similarly to the proof of the case (a), 
	since $P$ is selected such that 
	the number of irregular vertices in $P$ is minimized, 
	$(x, q-1) \in D$. 
	Hence, 
	$(x, q-2) = \overline{v}'$
	and 
	$\overline{v}' \in {\overline{R}}(D)$. 
	Then, 
	if $x = m$, 
	then $\overline{v}'$ satisfies the definition (iv) of a mobile. 
	Otherwise, 
	it satisfies the definition (iii). 
	\noindent
	{\bf\boldmath (c) $x = p'$ and $y \geq q+1$:}
	Similarly to the above cases, 
	we can show a mobile $\overline{v}' \in {\overline{R}}(D)$. 
	Specifically, 
	$(p'-1, y) \notin R(D)$ by the condition (Q7)
	and 
	$(p', y+1), (p', y-1) \in R(D)$ by the condition (Q6). 
	Thus, 
	an irregular vertex which can be adjacent to $(p', y)$ is located at only $(p'+1, y)$. 
	Similarly to the proof of the case (a), 
	$(p'+2, y) \in D$. 
	Hence, 
	$(p'+1, y) = \overline{v}'$
	and 
	$\overline{v}' \in {\overline{R}}(D)$. 
	Then, 
	if $y = m$, 
	$\overline{v}'$ satisfies the definition (ii) of a mobile 
	and otherwise, 
	it satisfies the definition (i). 
	Since we can prove the statement for the case in which $p = p'$ similarly to $q = q'$,
	we omit the proof. 
	\fi
\end{proof}
\ifnum \count12 > 0
\begin{figure*}
	 \begin{center}
	  \includegraphics[width=\linewidth]{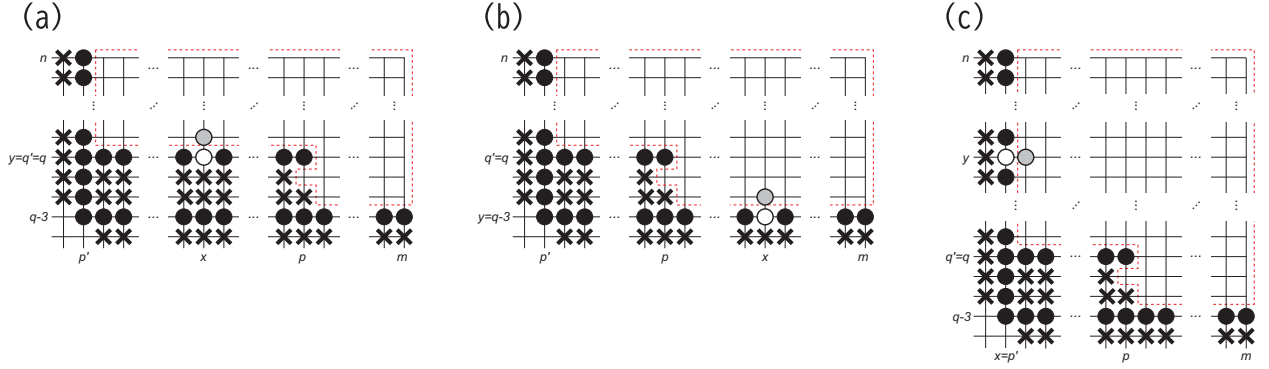}
	 \end{center}
	 \caption{
\ifnum \count10 > 0
\fi
\ifnum \count11 > 0
%
%\com{%%%%%}
%
Figure for Lemma~\ref{LMA:L2}. 
Suppose that $D$ is a $(p', q')$-$(p, q)$-regular MCDS. 
The figures show that when $q'=q$, 
white (gray, respectively) circles denote the positions of the vertex $u' = (x, y)$ ($\overline{v}'$, respectively). 
Black circles denote regular vertices in $D$ and 
vertices enclosed by dashed lines are irregular. 
\fi
			}
	\label{fig:l2}
\end{figure*}
\fi
\ifnum \count10 > 0
%
%%\com{%%%%%%}
%

%
\fi
\ifnum \count11 > 0
%
%%%\com{%%%%%}
%

%
\fi
%

%
%%%
\begin{LMA}\label{LMA:MV}
	\ifnum \count10 > 0
	%
	%%%%%%%%%%%%%%
	%
	\fi
	\ifnum \count11 > 0
	%
	%\com{%%%%%}
	%
	Suppose that $D$ is a $(p', q')$-$(p, q)$-regular MCDS. 
	Then, 
	the following properties hold: 
	\begin{itemize}
	\itemsep=-2.0pt
	\setlength{\leftskip}{0pt}
		\item[(i)]
		Suppose that either $q'=q$ and $p=m$ 
		or 
		$p'=p$ and $q=n$. 
		Also, 
		suppose that 
		$(x, y) \in R(D)$
		and 
		$(x+2, y) \in {\overline{R}}(D)$. 
		Then, 
		an MCDS $D' = D \backslash \{ v \} \cup \{ (x+1, y) \}$ exists, 
		in which 
		$v$ is a mobile in $D$
		(see Fig.~\ref{fig:lmv}).
		\item[(ii)]
		Suppose that either $q'=q$ and $p=m$ 
		or 
		$p'=p$ and $q=n$. 
		Also, 
		suppose that 
		$(x, y) \in R(D)$
		and 
		$(x, y+2) \in {\overline{R}}(D)$. 
		Then, 
		an MCDS $D' = D \backslash \{ v \} \cup \{ (x, y+1) \}$ exists, 
		in which 
		$v$ is a mobile in $D$. 
		\item[(iii)]
		Suppose that $q'=q$ and $p \leq m-2$. 
		Also, 
		suppose that $P$ is a simple path consisting of $D$ between $(p-1, q)$ and $(p+2, q)$
		such that 
		the number of vertices in ${\overline{R}}(D)$ in $P$ is minimum and 
		$P$ contains neither $(p, q)$ nor $(p, q+1)$. 
		Then, 
		an MCDS $D' = D \backslash \{ v \} \cup \{ (p+1, q) \}$ exists, 
		in which 
		$v$ is a mobile in $D$. 
		\item[(iv)]
		Suppose that $p'=p$ and $q \leq n-2$. 
		Also, 
		suppose that $P$ is a simple path consisting of $D$ between $(p, q-1)$ and $(p, q+2)$
		such that 
		the number of vertices in ${\overline{R}}(D)$ in $P$ is minimum and 
		$P$ contains neither $(p, q)$ nor $(p+1, q)$. 
		Then, 
		an MCDS $D' = D \backslash \{ v \} \cup \{ (p, q+1) \}$ exists, 
		in which 
		$v$ is a mobile in $D$. 
	\end{itemize}
	\fi
\end{LMA}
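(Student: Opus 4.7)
The plan is to handle the four parts by a single template: produce a simple path $P$ in the subgraph induced by $D$ meeting the hypotheses of Lemma~\ref{LMA:L2}, invoke that lemma to obtain a mobile $v$ on $P$, and then verify that $D' := (D \setminus \{v\}) \cup \{w\}$ is a CDS, where $w$ is the vertex named in the statement. Since $|D'| = |D|$ and $D$ is an MCDS, $D'$ is automatically an MCDS.

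For part~(iii) the path $P$ is supplied by the hypothesis itself with $u = (p-1,q) \in R(D)$ and $\overline{v} = (p+2,q) \in \overline{R}(D)$, and its stated conditions coincide with those of Lemma~\ref{LMA:L2} under $q'=q$, so the mobile is obtained immediately. Part~(iv) is entirely symmetric. For parts~(i) and (ii), since $D$ is connected and both $(x,y) \in R(D)$ and the irregular vertex $(x+2,y)$ or $(x,y+2)$ lie in $D$, there is a simple path between them in the induced subgraph on $D$; I would choose one minimizing the number of irregular vertices, and use the boundary assumption ($p=m$ or $q=n$) together with the regularity conditions to reroute locally, if necessary, so that the forbidden vertex $(p,q)$ and the companion vertex $(p,q+1)$ or $(p+1,q)$ are avoided. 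Lemma~\ref{LMA:L2} then produces the required mobile.

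The remaining work is to verify that $D'$ dominates $V$ and is connected. Domination is essentially automatic: the five defining configurations of a mobile $v$ are tailored so that every vertex dominated by $v$ is also dominated by one of the in-$D$ neighbors of $v$ prescribed in that configuration; hence $D \setminus \{v\}$ already dominates $V$, and adjoining $w$ can only enlarge the dominated set. The main obstacle is connectivity, because removing $v$ might in principle disconnect its in-$D$ neighbors from the rest of $D$. I would argue case by case over the five mobile patterns: in each pattern the in-$D$ neighbors of $v$ form an L-shape or row fragment that, by the regularity conditions (Q1)--(Q9) and (P1)--(P9), is joined in $D \setminus \{v\}$ through surrounding regular vertices, so $D \setminus \{v\}$ remains connected; adding the vertex $w$, which is adjacent to the regular vertex $(x,y)$ (respectively $(p-1,q)$ or $(p,q-1)$) and to the endpoint $(x+2,y)$ (respectively $(p+2,q)$, $(x,y+2)$, $(p,q+2)$), preserves connectivity and completes the verification that $D'$ is an MCDS.
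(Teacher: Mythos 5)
Your overall route is the same as the paper's: obtain a path satisfying the hypotheses of Lemma~\ref{LMA:L2}, extract a mobile $v$, swap it for the named vertex $w$, and check that the result is still a CDS of the same cardinality. The domination step is handled correctly (each mobile pattern is indeed designed so that the prescribed in-$D$ neighbours re-dominate everything $v$ dominated).

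The connectivity step, however, contains a genuine error. You assert that in each mobile pattern the in-$D$ neighbours of $v$ are ``joined in $D \setminus \{v\}$ through surrounding regular vertices, so $D \setminus \{v\}$ remains connected.'' This cannot be right: $D \setminus \{v\}$ is still a dominating set (by the mobile property you just verified), so if it were also connected it would be a CDS strictly smaller than the MCDS $D$, a contradiction. Removing a mobile from an MCDS \emph{must} disconnect it. The argument the paper actually needs --- and gives --- is different in kind: the case analysis over the mobile patterns shows that $D \setminus \{v\}$ falls into \emph{at most two} connected pieces (e.g.\ for pattern~(i), the neighbours $(x'{-}1,y'{\pm}1)$ and any of $(x',y'{\pm}1)$ that lie in $D$ all attach to the piece containing $(x'{-}1,y')$, leaving only $(x'{+}1,y')$ on the other side), one piece containing $u=(x,y)$ and the other containing $\overline{v}$; the added vertex $w$, adjacent to both $(x,y)$ and $(x+2,y)$ (resp.\ the analogous endpoints in (ii)--(iv)), then bridges the two pieces. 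So the conclusion you want is reached not because connectivity is preserved under deletion, but because the deletion creates exactly one cut that the new vertex repairs. Separately, for parts~(i) and (ii) your plan to ``reroute locally'' so that $P$ avoids $(p,q)$ and $(p,q{+}1)$ (resp.\ $(p{+}1,q)$) is left unjustified; under the boundary hypotheses $p=m$ or $q=n$ this needs at least a sentence, since Lemma~\ref{LMA:L2} is stated only for paths meeting those avoidance conditions.
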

\begin{proof}
	\ifnum \count10 > 0
	%
	%\com{%%%%%%}
	%%%%%%%%%%%%%%
	%
	\fi
	\ifnum \count11 > 0
	%
	%\com{%%%%%}
	%
	Suppose that $D$ is a $(p', q')$-$(p, q)$-regular MCDS. 
	\noindent
	{\bf (i):}
	First, 
	we consider the case of (i). 
	Suppose that
	either 
	$q'=q$ and $p=m$ or 
	$p'=p$ and $q=n$. 
	Also, 
	suppose that $(x, y) \in R(D)$
	and 
	$(x+2, y) \in {\overline{R}}(D)$. 
	By applying Lemma~\ref{LMA:L2} with $(x+2, y)$ and $(x, y)$ as $\overline{v}$ and $u$ in its statement, respectively, 
	there exists a mobile $\overline{v}' \in {\overline{R}}(D)$ in a path $P$, 
	in which 
	$P$ is a simple path of vertices in $D$ between $(x, y)$ and $(x+2, y)$ 
	such that 
	the number of vertices in ${\overline{R}}(D)$ in $P$ is minimum. 
	Let us define
	$D' = D \backslash \{ \overline{v}' \} \cup \{ (x+1, y) \}$. 
	Since $\overline{v}'$ is a mobile, 	
	$D \backslash \{ \overline{v}' \}$ is dominating. 
	Then, 
	in what follows, 
	we will show that $D'$ is connected. 
	That is, 
	we will show that for any two vertices $a, b \in D'$, 
	there exists a path consisting of $D'$ between $a$ and $b$. 
	Let $D_{1}$ be a set of vertices $\tilde{v}$ 
	such that there exists a path consisting of vertices in $D \backslash \{ \overline{v}' \}$ between $v$ and $(x, y)$. 
	Also, 
	let $D_{2}$ be the vertex set of vertices in $D$ except for those in $D_{1} \cup \{ \overline{v}' \}$. 
	That is, 
	$D_{2} = D \backslash (D_{1} \cup \{ \overline{v}' \})$. 
	There exists a path consisting of vertices in $D_{1}$ between $(x, y)$ and any vertex in $D_{1}$ by definition 
	(called Fact~(a)). 
	That is, 
	for any two vertices in $D_{1}$, 
	there exists a path of vertices in $D_{1}$ between them. 
	Also, 
	suppose that $\overline{v}' = (x', y')$ and 
	$\overline{v}'$ satisfies the condition (i) of a mobile, 
	that is, 
	$(x'+1, y'), (x'-1, y'), (x'-1, y'-1), (x'-1, y'+1) \in D$. 
	We will show that for any two vertices in $D_{2}$, 
	there exists a path of vertices in $D_{2}$ between them 
	by showing the number of subgraphs induced by $D \backslash \{ \overline{v}' \}$ is two. 
	If we can guarantee the existence of such a path, 
	then we have the following: 
	Since $D'$ contains $(x+1, y)$, 
	there exists a path of vertices in $D'$ between $(x, y)$ and $(x+2, y)$. 
	Hence, 
	for any $a \in D_{1}$ and any $b \in D_{2}$, 
	there exists a path of vertices in $D'$ between $a$ and $b$. 
	If both $(x', y'+1) \notin D$ and $(x', y'-1) \notin D$, 
	then the number of subgraphs induced by $D \backslash \{ \overline{v}' \}$ is two. 
	Then, 
	in the case in which $(x', y'+1) \in D$ ($(x', y'-1) \in D$, respectively), 
	we will show that $(x', y'+1)$ ($(x', y'-1)$, respectively) is contained in either $D_{1}$ or $D_{2}$
	(called Property~(b)). 
	Let us consider the case of $(x', y'+1) \in D$. 
	$(x', y'+1)$ is adjacent to $(x'-1, y'+1)$, 
	that is, 
	the path $(x', y'+1)(x'-1, y'+1)(x'-1,y')$ consists of three vertices in $D \backslash \{ \overline{v}' \}$. 
	Thus, 
	if $(x'-1, y') \in D_{1}$, then $(x', y'+1) \in D_{1}$ and 
	if $(x'-1, y') \in D_{2}$, then $(x', y'+1) \in D_{2}$. 
	Similarly, 
	in the case of $(x', y'-1) \in D$, 
	if $(x'-1, y') \in D_{1}$, $(x', y'-1) \in D_{1}$, 
	and 
	if $(x'-1, y') \in D_{2}$, $(x', y'-1) \in D_{2}$. 
	Thus, 
	we have shown that Property~(b) is true, 
	which implies that 
	we have shown that $D'$ is an MCDS.
	In the case in which $\overline{v}'$ satisfies the other conditions except for (i), 
	we can also prove that $D'$ is connected. 
	\noindent
	{\bf (ii):}
	We can prove (ii) similarly to the proof of (i). 
	Suppose that
	either 
	$q'=q$ and $p=m$ or 
	$p'=p$ and $q=n$. 
	Also, 
	suppose that $(x, y) \in R(D)$
	and 
	$(x, y+2) \in {\overline{R}}(D)$. 
	By applying Lemma~\ref{LMA:L2} with $(x, y+2)$ and $(x, y)$ as $\overline{v}$ and $u$, respectively, 
	there exists a mobile $\overline{v}' \in {\overline{R}}(D)$ in a path $P$, 
	in which 
	$P$ is a simple path of vertices in $D$ between $(x, y)$ and $(x, y+2)$ 
	such that 
	the number of vertices in ${\overline{R}}(D)$ in $P$ is minimum. 
	Let us define 
	$D' = D \backslash \{ \overline{v}' \} \cup \{ (x, y+1) \}$. 
	Since $\overline{v}'$ is a mobile, 	
	$D \backslash \{ \overline{v}' \}$ is dominating. 
	We omit the rest of the proof 
	because we can show that $D'$ is connected 
	similarly to the proof of (i). 
	\noindent
	{\bf (iii):}
	We can also prove (iii) similarly to the proof of (i). 
	Suppose that 
	$q'=q$ and $p \leq m-2$. 
	Also, 
	suppose that $P$ is a simple path between $(p-1, q)$ and $(p+2, q)$ which satisfies the conditions in statement (iii) of this lemma. 
	By the definition of the regularity, 
	$(p-1, q) \in R(D)$ 
	and 
	$(p+2, q) \in {\overline{R}}(D)$. 
	By applying Lemma~\ref{LMA:L2} with $(p+2, q)$ and $(p-1, q)$ as $\overline{v}$ and $u$, respectively, 
	there exists a mobile $\overline{v}' \in {\overline{R}}(D)$ in the path $P$.  
	Let us define 
	$D' = D \backslash \{ \overline{v}' \} \cup \{ (p+1, q) \}$. 
	Since $\overline{v}'$ is a mobile, 	
	$D \backslash \{ \overline{v}' \}$ is dominating. 
	We omit the rest of the proof 
	because we can show that $D'$ is connected 
	similarly to the proof of (i). 
	\noindent
	{\bf (iv):}
	We can also prove (iv) similarly to the proof of (i). 
	Suppose that 
	$p'=p$ and $q \leq n-2$. 
	Also, 
	suppose that $P$ is a simple path between $(p, q-1)$ and $(p, q+2)$ which satisfies the conditions in statement (iv). 
	By the definition of the regularity, 
	$(p, q-1) \in R(D)$ 
	and 
	$(p, q+2) \in {\overline{R}}(D)$. 
	By applying Lemma~\ref{LMA:L2} with $(p, q+2)$ and $(p, q-1)$ as $\overline{v}$ and $u$, respectively, 
	there exists a mobile $\overline{v}' \in {\overline{R}}(D)$ in the path $P$.  
	Let us define 
	$D' = D \backslash \{ \overline{v}' \} \cup \{ (p, q+1) \}$. 
	Since $\overline{v}'$ is a mobile, 	
	$D \backslash \{ \overline{v}' \}$ is dominating. 
	We omit the rest of the proof 
	because we can show that $D'$ is connected 
	similarly to the proof of (i). 
	\fi
\end{proof}
\ifnum \count12 > 0
\begin{figure*}
	 \begin{center}
	  \includegraphics[width=\linewidth]{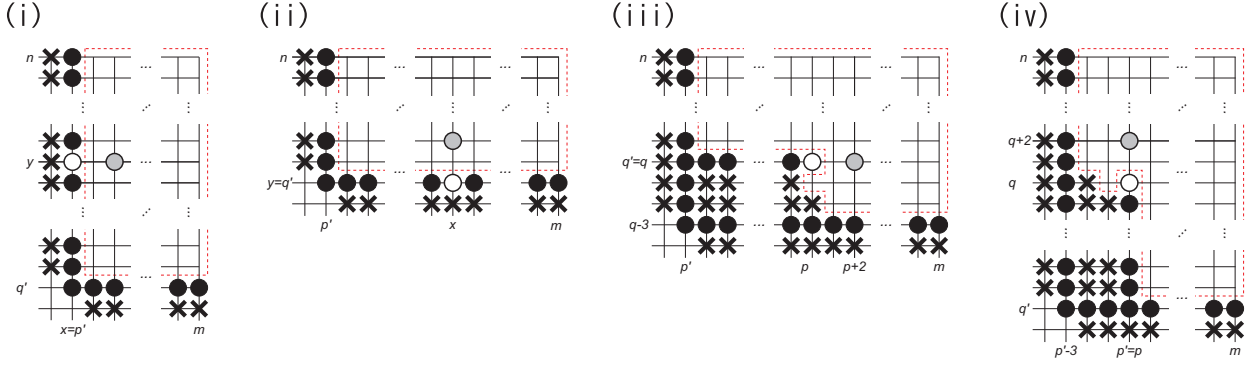}
	 \end{center}
	 \caption{
\ifnum \count10 > 0
\fi
\ifnum \count11 > 0
%
%\com{%%%%%}
%
Suppose that $D$ is a $(p', q')$-$(p, q)$-regular MCDS. 
Circles and crosses denote vertices in $D$ and not in $D$, respectively. 
Lemma~\ref{LMA:L2} shows that for a path of vertices in $D$ between a white circle and a gray circle, 
there exists a mobile in the path. 
Note that by the regularity, 
if $q' = q$, then $y = q'$, 
and if $p' = p$, then $x = p'$. 
\fi
			}
	\label{fig:lmv}
\end{figure*}
\fi
%

%
%%%LT
\begin{LMA}\label{LMA:LT}
	\ifnum \count10 > 0
	%
	%%%%%%%%%%%%%%
	%
	\fi
	\ifnum \count11 > 0
	%
	%\com{%%%%%}
	%
	Suppose that a vertex set $D$ is an MCDS which is either $(p, q)$-$(m, q)$-regular or $(p, q)$-$(p, n)$-regular. 
	Then, 
	the following properties are true: 
	\begin{itemize}
	\itemsep=-2.0pt
	\setlength{\leftskip}{0pt}
		\item[(i)]
		If for some $p \geq 2$ and some $q \geq 2$, 
		$(p+2, q+1) \in D$
		and 
		$(p+3, q+1) \notin D$, 
		then 	
		there exists an MCDS $D'$ into which $D$ is $(p+3, q)$-$(p+3, q+1)$-regularized
		(see Fig.~\ref{fig:lt}).
		\item[(ii)]
		If for some $p \geq 2$ and some $q \geq 2$, 
		$(p+1, q+2) \in D$
		and 
		$(p+1, q+3) \notin D$, 
		then 	
		there exists an MCDS $D'$ into which $D$ is $(p, q+3)$-$(p+1, q+3)$-regularized. 
		\item[(iii)]
		If $p = 0$, 
		$q = 2$, 
		$(1, 3) \in D$
		and 
		$(2, 3) \notin D$, 
		then 	
		there exists an MCDS $D'$ into which $D$ is $(2, 2)$-$(2, 3)$-regularized. 
	\end{itemize}
	\fi
\end{LMA}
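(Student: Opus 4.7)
The plan is to construct $D'$ by applying Lemma~\ref{LMA:MV} to exchange a mobile in $D$ for the irregular vertex required by the target regularity. For case~(i), I would invoke Lemma~\ref{LMA:MV}(ii) with $(x, y) = (p+3, q)$: the vertex $(p+3, q)$ is regular in $D$ by condition~(Q1) of the current regularity, while $(p+3, q+2)$ lies outside the current regular region and is therefore irregular. Lemma~\ref{LMA:MV}(ii) then yields an MCDS $D^{*} = D \setminus \{v\} \cup \{(p+3, q+1)\}$ for some mobile $v$ in $D$ lying on a minimum-irregular-vertex path from $(p+3, q)$ to $(p+3, q+2)$.

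To match the target $(p+3, q)$-$(p+3, q+1)$-regularity, the mobile $v$ should coincide with $(p+2, q+1)$, since condition~(P2) of the target forbids $(p+2, q+1)$ from $D'$. I would argue, by routing the path through the hypothesized in-$D$ vertex $(p+2, q+1)$ and using the surrounding regular vertices $(p+1, q)$, $(p+2, q)$, $(p+3, q)$ guaranteed by current~(Q1), that $(p+2, q+1)$ itself satisfies one of the mobile conditions (specifically condition~(iii), with $(p+2, q+2)$ forced into $D$ by the minimality of the path's count of irregular vertices). Thus one may take $v = (p+2, q+1)$, and $D' = D \setminus \{(p+2, q+1)\} \cup \{(p+3, q+1)\}$ is the required MCDS.

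The verification that $D'$ satisfies each of the target conditions (P1)-(P9) is then largely mechanical: (P1) holds by construction; (P2) holds since $(p+2, q+1)$ has been removed and the remaining listed vertices are controlled by the current regularity; and (P3)-(P9) translate from the corresponding current conditions (Q1)-(Q9) after the parameter relabeling, because the single-vertex swap does not disturb any of the vertices that those conditions constrain. Cases~(ii) and~(iii) proceed by analogous arguments: (ii) exchanges rows and columns, applying Lemma~\ref{LMA:MV}(i) with $(x, y) = (p, q+3)$ and using the hypothesis $(p+1, q+2) \in D$ to identify the mobile; (iii) handles the boundary case $p = 0, q = 2$ with target $(2, 2)$-$(2, 3)$-regularity, using the hypothesis $(1, 3) \in D$ analogously.

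The main obstacle is justifying the choice $v = (p+2, q+1)$ (and its analogues in the other cases). Lemma~\ref{LMA:MV} only guarantees the existence of \emph{some} mobile on the path, so to realize the required regularization I must exhibit a specific minimum-irregular-vertex path and verify that the hypothesized vertex itself satisfies at least one of the five conditions defining a mobile. This is the principal combinatorial content of the proof; a secondary complication is that the current regularity comes in two alternative forms (either $(p, q)$-$(m, q)$-regular or $(p, q)$-$(p, n)$-regular), which must be handled by symmetric but separate case analyses.
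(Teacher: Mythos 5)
Your proposal has a genuine gap, and it is located exactly where you flag your ``main obstacle'': the single-vertex exchange via Lemma~\ref{LMA:MV} cannot carry the argument. First, the invocation of Lemma~\ref{LMA:MV}(ii) with $(x,y)=(p+3,q)$ requires $(p+3,q+2)\in\overline{R}(D)$, and since $\overline{R}(D)=D\setminus R(D)$ this means $(p+3,q+2)$ must actually belong to $D$; the hypothesis of case~(i) only gives $(p+2,q+1)\in D$ and $(p+3,q+1)\notin D$, so the precondition of the lemma is not established. Second, even granting that some mobile exists, the swap $D'=D\setminus\{(p+2,q+1)\}\cup\{(p+3,q+1)\}$ fails whenever $(p+2,q+1)$ is the bottom of a vertical run $(p+2,q+1),(p+2,q+2),\ldots,(p+2,y)\in D$ whose only attachment to the rest of $D$ is through $(p+2,q+1)$: removing that one vertex disconnects the rest of the run, and the added vertex $(p+3,q+1)$ does not reconnect it because column $p+3$ need not meet the run. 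In that situation $(p+2,q+1)$ also need not satisfy any mobile condition (condition~(iii) would require $(p+2,q+2)\in D$, which you assert is ``forced'' but do not prove, and in any case does not repair connectivity).

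The paper's proof resolves precisely this difficulty by a different construction: it identifies the maximal run of $D$-vertices in column $p+2$ above row $q$ and performs a \emph{multi-vertex} shift, either moving the entire segment $\{(p+2,j)\mid j\in[q+1,y]\}$ to $\{(p+3,j)\mid j\in[q+1,y]\}$ (when the run reattaches to $D$ at its top), or to the diagonal translate $\{(p+3,j+1)\mid j\in[q+1,y']\}$ (when it does not), and only in the latter case does it afterwards apply Lemma~\ref{LMA:MV}(ii) --- at which point $(p+3,q+2)$ \emph{has} been placed into the set, so the lemma's hypothesis holds. Domination and connectivity of the shifted set are then verified directly. To repair your argument you would need to replace the single swap with this kind of segment relocation (or an equivalent device); the remainder of your outline, namely that the conditions (P1)--(P9) of the target regularity follow mechanically from the current (Q)/(P) conditions because only irregular vertices are touched, matches the paper and is fine.
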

\begin{proof}
	\ifnum \count10 > 0
	%
	%%%%%%%%%%%%%%
	%
	\fi
	\ifnum \count11 > 0
	%
	%\com{%%%%%}
	%
	First, 
	we consider the cases (i) and (ii). 
	Suppose that a vertex set $D$ is an MCDS which is either $(p, q)$-$(m, q)$-regular or $(p, q)$-$(p, n)$-regular. 
	We will construct a CDS $\hat{D}$ into which $D$ is regularized. 
	Note that since $D$ is an MCDS, 
	$\hat{D}$ is also an MCDS. 
	If $D$ is $(p, q)$-$(m, q)$-regular, 
	then $(p, q), \ldots, (m, q) \in D$ by the conditions (Q1) and (Q6) of the regularity. 
	If $D$ is $(p, q)$-$(p, n)$-regular,
	then $(p, q), \ldots, (m, q) \in D$ by the condition (P6).  
	These vertices are all regular and 
	are not removed from $D$ to construct $\hat{D}$. 
	Thus, 
	$\hat{D}$ satisfies the conditions (Q5) to be $(p, q+3)$-$(p+1, q+3)$-regular 
	and (P6) to be $(p+3, q)$-$(p+3, q+1)$-regular. 
	Similarly, 
	if $D$ is $(p, q)$-$(m, q)$-regular, 
	then $(p, q), \ldots, (p, n) \in D$ by the condition (Q6). 
	If $D$ is $(p, q)$-$(p, n)$-regular, 
	$(p, q), \ldots, (p, n) \in D$ by the conditions (P1) and (P6). 
	Thus, 
	$\hat{D}$ satisfies the conditions (Q6) to be $(p, q+3)$-$(p+1, q+3)$-regular 
	and (P5) to be $(p+3, q)$-$(p+3, q+1)$-regular. 
	Also, 
	since $(p+1, q), (p+2, q), (p,q+1), (p, q+2) \in D$, 
	these vertices dominate all the vertices which $(p+1, q+1)$ can dominate and 
	$(p+1, q+1) \notin D$. 
	Thus, 
	$\hat{D}$ satisfies the conditions (Q4) and (P4). 
	Since $(p+1, q-1), \ldots, (m, q-1) \notin D$ by the conditions (Q2) and (P7), 
	$\hat{D}$ satisfies the conditions (Q8) to be $(p, q+3)$-$(p+1, q+3)$-regular 
	and (P7) to be $(p+3, q)$-$(p+3, q+1)$-regular. 
	Since $(p-1, q+1), \ldots, (p-1, n) \notin D$ by the conditions (Q7) and (P2), 
	$\hat{D}$ satisfies the conditions (Q7) to be $(p, q+3)$-$(p+1, q+3)$-regular 
	and (P8) to be $(p+3, q)$-$(p+3, q+1)$-regular. 
	By the above argument, 
	$\hat{D}$ satisfies all the conditions except for (Q1) ((P8), respectively) to be $(p, q+3)$-$(p+1, q+3)$-regular ($(p+3, q)$-$(p+3, q+1)$-regular, respectively). 
	Then, 
	in order to satisfy these two conditions, 
	we will show that 
	if $(p+2, q+1) \in D$, 
	then we can obtain $\hat{D}$ such that $(p+3, q+1) \in \hat{D}$, 
	and 
	if $(p+1, q+2) \in D$, 
	we can obtain $\hat{D}$ such that $(p+1, q+3) \in \hat{D}$
	by regularizing $D$. 
	Moreover, 
	we will show that 
	$\hat{D}$ is dominating and connected. 
	We can show that this lemma is true by showing them. 
	We first consider the case (i), that is, 
	the case in which 
	$(p+2, q+1) \in D$
	and 
	$(p+3, q+1) \notin D$. 
	In this case, 
	one of the following two cases holds 
	(see Fig.~\ref{fig:lt}):
	\begin{itemize}
	\itemsep=-2.0pt
	\setlength{\leftskip}{0pt}
		\item[(a)]
	For some $y \in [q+1, n]$, 
	$(p+2, q+1), \cdots, (p+2, y) \in D$
	and 
	$(p+3, q+1), \cdots, (p+3, y) \notin D$. 
	Also, 
	If $y+1 \leq n$, 
	$(p+2, y+1), (p+3, y+1) \in D$. 
		\item[(b)]
	For some $y' \in [q+1, n-1]$, 
	$(p+2, q+1), \cdots, (p+2, y') \in D$, 
	$(p+3, q+1), \cdots, (p+3, y') \notin D$
	and 
	$(p+2, y'+1) \notin D$. 
	\end{itemize}
	We will investigate each case. 
	\noindent
	{\bf (a): }
	For each $i = q+1, \ldots, y$, 
	we construct a vertex set $D'$ by removing $(p+2, i)$ from $D$ and adding $(p+3, i)$ to $D'$. 
	That is, 
	$D' = D \backslash 
		\{ (p+2, j) \mid j \in [q+1, y] \} \cup 
		\{ (p+3, j) \mid j \in [q+1, y] \}$. 
	We show that $D'$ is dominating. 
	The vertices dominated by $(p+2, i)$ are 
	$(p+1, i), (p+2, i), (p+3, i), (p+2, i-1)$ and $(p+2, i+1)$, 
	and 
	we confirm that these five vertices are also dominated by vertices in $D'$. 
	If $i \in [q+2, y-1]$, 
	then $(p+2, i), (p+3, i), (p+2, i-1)$ and $(p+2, i+1)$
	are dominated by 
	the newly added vertices 
	$(p+3, i), (p+3, i), (p+3, i-1)$ and $(p+3, i+1)$, respectively. 
	If $i = q+1$, 
	then $(p+2, q+1), (p+3, q+1)$ and $(p+2, q+2)$ 
	are dominated by 
	the new vertices $(p+3, q+1), (p+3, q+1)$ and $(p+3, q+2)$, respectively. 
	Since $D$ is 
	$(p, q)$-$(m, q)$-regular or 
	$(p, q)$-$(p, n)$-regular 
	by the supposition of this lemma, 
	$(p+2, q) \in D$ 
	by the condition (Q1) to be $(p, q)$-$(m, q)$-regular and 
	the condition (P6) to be $(p, q)$-$(p, n)$-regular, 
	which results in $(p+2, q) \in D'$. 
	If $i = y$
	and 
	$y+1 \leq n$, 
	then 
	$(p+2, y+1) \in D'$ 
	because $(p+2, y+1) \in  D$
	by the condition of the case~(a). 
	If $i = y$ and 
	$y = n$, 
	$(p+2, n+1)$ does not exist. 
	Also, 
	since 
	$p \geq 2$ by the condition of (a)
	and 
	$D$ is $(p, q)$-$(m, q)$-regular or 
	$(p, q)$-$(p, n)$-regular by the supposition of the lemma, 
	$(p, i) \in D$ 
	by the conditions (Q6) to be $(p, q)$-$(m, q)$-regular and 
	(P1) to be $(p, q)$-$(p, n)$-regular. 
	Thus, 
	$(p, i) \in D'$
	and 
	$(p, i)$ dominates $(p+1, i)$. 
	Next, 
	we show that $D'$ is connected. 
	$(p+2, q), (p+3, q), (p+2, q), (p+3, y+1) \in D'$ 
	because $(p+2, q), (p+3, q), (p+2, q), (p+3, y+1) \in D$. 
	For any two vertices $a, b \in D$, 
	suppose that there exists a path $P$ between $a$ and $b$ consisting of vertices in $D$ 
	such that $P$ contains some vertex $(p+2, i') \hspace{1mm} (i' \in [q+1, y])$. 
	For any $i'' \in [q+1, y+1]$, 
	$D$ does not contain $(p+1, i'')$
	because of the minimality of $D$. 
	That is, 
	$P$ necessarily contains either $(p+2, y+1)$ or $(p+3, y+1)$. 
	Hence, 
	there exists a path of vertices in $D'$ between $a$ and $b$, 
	which implies that 
	$D'$ is connected. 
	Therefore, 
	$(p+3, q+1) \in D'$
	and 
	$D'$ is a $(p+3, q)$-$(p+3, q+1)$-regular MCDS. 
	\noindent
	{\bf (b): }
	For each $i = q+1, \ldots, y'$, 
	we construct a vertex set $D'$ by removing $(p+2, i)$ from $D$ and adding $(p+3, i+1)$. 
	That is, 
	$D' = D \backslash 
		\{ (p+2, j)   \mid j \in [q+1, y'] \} \cup 
		\{ (p+3, j+1) \mid j \in [q+1, y'] \}$. 
	In what follows, 
	we will show that $D'$ is an MCDS. 
	After that, 
	by applying (ii) in Lemma~\ref{LMA:MV} with   
	$(p+3, q) \in R(D')$ and $(p+3, q+2) \in {\overline{R}}(D')$
	as $(x, y)$ and $(x, y+2)$, respectively, 
	there exists an MCDS $D''$ such that $D'' = D' \backslash \{ v' \} \cup \{ (p+3, q+1) \}$, 
	in which $v' \in {\overline{R}}(D')$ is a mobile. 
	Since $(p+3, q+1) \in D''$, 
	$D''$ is $(p+3, q)$-$(p+3, q+1)$-regular. 
	First, 
	we show that $D'$ is dominating. 
	For each $i = q+1, \ldots, y'$, 
	the vertices dominated by $(p+2, i)$ are 
	$(p+1, i), (p+2, i), (p+3, i), (p+2, i-1)$ and $(p+2, i+1)$. 
	If $i \ne q+1$, 
	then $(p+2, i), (p+3, i), (p+2, i-1)$ and $(p+2, i+1)$ are 
	dominated by 
	$(p+3, i), (p+3, i), (p+3, i-1)$ and $(p+3, i+1)$, respectively. 
	Also, 
	if $i = q+1$, 
	then 
	$(p+2, q+2)$ is dominated by 
	$(p+3, q+2)$. 
	Since $D$ is 
	$(p, q)$-$(m, q)$-regular or 
	$(p, q)$-$(p, n)$-regular 
	by the supposition of this lemma, 
	$(p+2, q), (p+3, q) \in D$ by the conditions (Q1) and (P6). 
	Hence, 
	$(p+2, q), (p+3, q) \in D'$, 
	which dominate $(p+2, q+1), (p+3, q+1)$ and $(p+2, q)$. 
	Next, 
	we show that $D'$ is connected. 
	If $y' \leq n-2$, 
	then either $(p+4, y'+1) \in D$ 
	or 
	$(p+3, y'+2) \in D$ 
	because there exists a vertex to dominate $(p+3, y'+1)$ and 
	$D$ is connected. 
	If $y' = n-1$, 
	then $(p+4, y'+1) \in D$ for the same reason. 
	Hence, 
	either $(p+4, y'+1) \in D'$ 
	or $(p+3, y'+2) \in D'$. 
	We now define 
	$A = \{ (p+3, j+1) \mid j \in [q+1, y'] \}$
	and 
	$B = D' \backslash A$. 
	That is, 
	$B = D \backslash \{ (p+2, j) \mid j \in [q+1, y'] \}$. 
	In what follows, 
	we show that for any two vertices $a, b \in D'$, 
	there exists a path consisting of vertices in $D'$ between $a$ and $b$. 
	If $a, b \in B$, 
	then such a path exists because $D$ is connected. 
	If $a, b \in A$, 
	such a path exists by the definition of $A$. 
	If $a \in A$ and $b \in B$, 
	then there exists a path of vertices in $D'$
	between $a$ and either $(p+4, y'+1)$ or $(p+3, y'+2)$. 
	Since the vertices $(p+4, y'+1)$ and $(p+3, y'+2)$ belong to $B$, 
	there exists a path of vertices in $D'$ between $b$ and either $(p+4, y'+1)$ or $(p+3, y'+2)$. 
	Hence, 
	there also exists such a path between $a$ and $b$ in this case. 
	By the above argument, 
	there exists such a path of vertices in $D'$  between $a$ and $b$. 
	That is, $D'$ is connected. 
	Therefore, 
	$D'$ is an MCDS and 
	we have shown that $D''$ is a $(p+3, q)$-$(p+3, q+1)$-regular MCDS by the initial argument. 
	The case of (ii), 
	that is, 
	the case in which 
	$(p+1, q+2) \in D$ 
	and 
	$(p+1, q+3) \notin D$ is symmetric to the case (i) and thus, 
	we omit the proof of this case. 
	Since the case~(iii) is a special case of the case~(i), 
	we can prove this case in the same way as that of (i) and omit the proof. 
	\fi
\end{proof}
\ifnum \count12 > 0
\begin{figure*}
	 \begin{center}
	  \includegraphics[width=\linewidth]{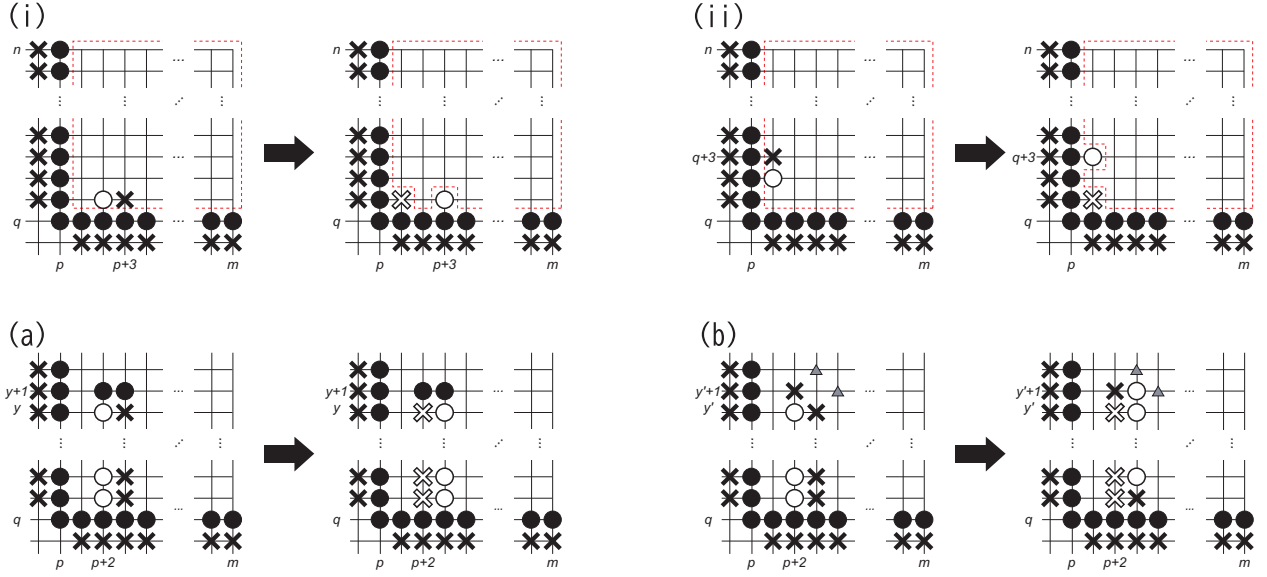}
	 \end{center}
	 \caption{
\ifnum \count10 > 0
\fi
\ifnum \count11 > 0
%
%\com{%%%%%}
%
Figure for Lemma~\ref{LMA:LT}. 
Suppose that $D$ is an MCDS which is either $(p, q)$-$(m, q)$-regular or $(p, q)$-$(p, n)$-regular. 
Circles and crosses denote vertices in $D$ and not in $D$, respectively. 
\fi
			}
	\label{fig:lt}
\end{figure*}
\fi
\ifnum \count10 > 0
%
%%\com{%%%%%%}
%

%
\fi
\ifnum \count11 > 0
%
%%%\com{%%%%%}
%

%
\fi
%
%
%%%LDR
\begin{LMA}\label{LMA:L4}
	\ifnum \count10 > 0
	\fi
	\ifnum \count11 > 0
	%
	%\com{%%%%%}
	%
	Suppose that a vertex set $D$ is a $(p', q')$-$(p, q)$-regular MCDS. 
	Then, 
	the following properties hold: 
	\begin{itemize}
		\itemsep=-2.0pt
		\setlength{\leftskip}{0pt}
		\item[(i)]
		If $q' = q$
		and 
		$p \leq m-1$, 
		there exists an MCDS $D'$ into which $D$ is $(p', q')$-$(p+1, q)$-regularized. 
		\item[(ii)]
		If $p' = p$ 
		and 
		$q \leq n-1$, 
		there exists an MCDS $D'$ into which $D$ is $(p', q')$-$(p, q+1)$-regularized. 
	\end{itemize}
	\fi
\end{LMA}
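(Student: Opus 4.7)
The plan is to prove part~(i) directly; part~(ii) is its row/column symmetric counterpart (and can formally be reduced to (i) via Lemma~\ref{LMA:L1}). Let $D$ be a $(p',q')$-$(p,q)$-regular MCDS with $q'=q$ and $p\le m-1$. I aim to construct an MCDS $D'$, obtained from $D$ by a swap of irregular vertices of equal cardinality, that is $(p',q')$-$(p+1,q)$-regular.

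Because any allowed swap only rearranges irregular vertices, every condition (Q1)--(Q9) in the new regularity whose indices fall strictly inside the old regular region is inherited automatically from the regularity of $D$: the positions involved in those conditions are occupied (or vacated) by regular vertices of $D$ which are untouched. Thus the proof reduces to repairing the ``boundary extension'' conditions that can newly fail in passing the frontier from $(p,q)$ to $(p+1,q)$: primarily (Q1) extended to include $(p+1,q)\in D'$, (Q2) extended to include $(p,q-1)\notin D'$, and, in the corners, (Q3) when $p+1=m$ and (Q9) when $q=n-1$. I would handle each failing condition by one application of Lemma~\ref{LMA:MV}, which was designed precisely for this kind of local surgery.

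Concretely, when $(p+1,q)\notin D$ and $p\le m-2$ I would apply Lemma~\ref{LMA:MV}(iii) with a simple path $P$ of $D$-vertices between $(p-1,q)$ and $(p+2,q)$ that avoids both $(p,q)$ and $(p,q+1)$ and minimises the number of irregular vertices on it; the lemma then produces a mobile $v$ and the swap $D'=D\backslash\{v\}\cup\{(p+1,q)\}$ is an MCDS. The boundary subcase $p=m-1$ is handled analogously using Lemma~\ref{LMA:MV}(i) anchored at $(m-1,q)$, since $(p+2,q)$ no longer lies in the grid. Violations of the remaining new conditions (for instance $(p,q-1)\in D$) are repaired by an analogous swap using Lemma~\ref{LMA:MV}(i) or (ii), trading the offending irregular vertex for a neighbouring one whose presence is compatible with the regularity pattern.

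The main obstacle will be establishing the existence of the path $P$ demanded by Lemma~\ref{LMA:MV}(iii), because the forbidden cells $(p,q)$ and $(p,q+1)$ are exactly the cells that the most direct route from $(p-1,q)$ to $(p+2,q)$ inside $D$ would traverse. Exhibiting the required detour relies on the regularity conditions (Q5)--(Q7): (Q5) supplies the regular row at $y=q-3$ and (Q6) supplies the regular left column at $x=p'$, which together form a corridor through the already-dominated region allowing $(p-1,q)$ to be connected to $(p+2,q)$ while avoiding both forbidden cells. Once this path is in hand and Lemma~\ref{LMA:MV} has been applied, verifying that $D'$ satisfies every condition (Q1)--(Q9) for the parameters $(p',q')$-$(p+1,q)$ is mechanical: (Q1) is witnessed by the newly added vertex, the removed mobile is by definition irregular and hence absent from every position list in the preserved conditions, and the remaining conditions coincide with ones already enforced by the regularity of $D$.
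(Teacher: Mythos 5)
Your overall frame (extend the frontier from $(p,q)$ to $(p+1,q)$ by a swap of irregular vertices, inherit the interior conditions, and invoke Lemma~\ref{LMA:MV}(iii) when $(p+1,q)\notin D$) matches the paper's opening moves, and your treatment of the case $(p+1,q)\in D$ is consistent with the paper's (which in fact shows via minimality of $D$ that $(p,q-1)$ and $(p,q-2)$ are automatically absent, so no repair swap is needed there). But the heart of your argument has a genuine gap: you assume that a simple path $P$ in $D$ from $(p-1,q)$ to $(p+2,q)$ avoiding both $(p,q)$ and $(p,q+1)$ can always be exhibited via the ``corridor'' supplied by (Q5) and (Q6). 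That corridor (row $y=q-3$ for $x\ge p'+1$, column $x=p'$) does connect $(p-1,q)$ to row $q-3$, but it does not reconnect to $(p+2,q)$: the cells $(p+2,q-1)$ and $(p+2,q-2)$ lie outside the regular region and need not be in $D$, and indeed $(p+2,q)$ itself need not be in $D$ at all. The paper is forced to split into exactly these situations: its case (a) is the only one where Lemma~\ref{LMA:MV}(iii) applies; case (b), where every minimal path is forced through $(p,q)$ or $(p,q+1)$, and case (c), where $p+1=m$ or $(p+2,q)\notin D$, are resolved not by Lemma~\ref{LMA:MV} but by explicit hand-crafted multi-vertex exchanges (for instance $D'=D\backslash\{(p,q+1),(p+2,q+1),(p+2,q-1)\}\cup\{(p+1,q),(p+1,q+2),(p+3,q)\}$), each justified by a local domination-and-connectivity check together with minimality arguments identifying which neighbours of $(p+1,q-1)$ must lie in $D$. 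These cases constitute the bulk of the proof and your proposal has no mechanism for them.

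A second, smaller error: for the boundary subcase $p=m-1$ you propose to fall back on Lemma~\ref{LMA:MV}(i), but parts (i) and (ii) of that lemma carry the hypothesis that either $q'=q$ and $p=m$ or $p'=p$ and $q=n$, i.e.\ they are only available once the current row or column has already been fully regularized; they cannot be invoked while you are still mid-extension at $p=m-1$. The same objection applies to your suggestion of repairing a violation such as $(p,q-1)\in D$ by an ``analogous swap using Lemma~\ref{LMA:MV}(i) or (ii).'' The paper instead handles $p+1=m$ inside its case (c) with the same style of explicit exchange, and disposes of $(p,q-1)$ by minimality rather than by surgery.
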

\begin{proof}
	\ifnum \count10 > 0
	\fi
	\ifnum \count11 > 0
	%
	%\com{%%%%%}
	%
	Suppose that a vertex set $D$ is a $(p', q')$-$(p, q)$-regular MCDS. 
	First, we consider the case (i) in which 
	$q' = q$
	and 
	$q \leq n-2$. 
	We will construct a vertex set $\hat{D}$ by regularizing $D$. 
	Note that $\hat{D}$ is also an MCDS 
	because $D$ is an MCDS. 
	When $(p', q')$-$(p+1, q)$-regularizing $D$, 
	the conditions (Q5),(Q6),(Q7) and (Q8) of the regularity of $D$ are not changed.
	Thus, 
	$\hat{D}$ also satisfies the conditions (Q5),(Q6),(Q7) and (Q8) to be $(p', q')$-$(p+1, q)$-regular. 
	Also, 
	$(p'+1, q), \ldots, (p, q) \in D$ ($(p'+1, q-1), \ldots, (p-1, q-1) \notin D$, $(p'+1, q-2), \ldots, (p, q-2) \notin D$, respectively)
	by the condition (Q1) ((Q2), (Q4),  respectively) of the regularity of $D$. 
	Thus, 
	$(p'+1, q), \ldots, (p, q) \in \hat{D}$, 
	$(p'+1, q-1), \ldots, (p-1, q-1) \notin \hat{D}$
	and 
	$(p'+1, q-3), \ldots, (p, q-1) \notin \hat{D}$. 
	In what follows, 
	we will construct $\hat{D}$ by regularizing $D$ 
	such that 
	$(p+1, q) \in \hat{D}$, 
	$(p, q-1) \notin \hat{D}$ and 
	$(p+1, q-2) \notin \hat{D}$ 
	in order that 
	$\hat{D}$ satisfies the conditions (Q1),(Q2) and (Q4). 
	Also, 
	we will show that 
	if $p + 1 = m$, 
	then $(m, q-1) \notin \hat{D}$ 
	to satisfy the condition (Q3)
	and show that 
	if $q = n-1$, 
	then $(p, q) \notin \hat{D}$ 
	to satisfy the condition (Q9). 
	Moreover, 
	we will show that 
	$\hat{D}$ is dominating and connected. 
	We can prove (i) in the statement of this lemma by showing them. 
	We first consider the case in which $(p+1, q) \in D$ 
	(see Case~1 in Fig.~\ref{fig:routine}). 
	If $q' = q \leq 2$, 
	then $q' = q = 2$ by (i) in Lemma~\ref{LMA:PQ}. 
	Since the condition (Q1) is satisfied, 
	if $p \geq 2$, 
	then $(p-1, 2), (p, 2), (p+1, 2) \in D$, 
	and 
	if $p = 1$, 
	then $(1, 2), (2, 2) \in D$.
	Thus, 
	all the vertices which $(p, 1)$ can dominate are dominated by other vertices in $D$. 
	$D$ does not contain $(p, q-1) (= (p, 1))$ 
	because $D$ is minimum. 
	If $q' = q > 2$, 
	then $q' \geq 5$ by (ii) in Lemma~\ref{LMA:PQ}. 
	Then, 
	if $p' \geq 2$ and $p \geq p'+2$, 
	then $(p-1, q), (p, q), (p+1, q) \in D$ 
	by the condition (Q1). 
	If $p' \geq 2$ and $p = p'+2$, 
	then $(p-1, q), (p, q), (p+1, q) \in D$ as well  
	by the conditions (Q1) and (Q6). 
	It follows from the condition (Q1) that 
	if $p' \leq 1$, 
	then $(p-1, q), (p, q), (p+1, q) \in D$ 
	and 
	if $p = 1$, 
	then $(p, q)(=(1, q)), (p+1, q)(=(2, q)) \in D$. 
	Moreover, 
	if $q' \geq 5$, 
	$(p, q-3) \in D$ by the condition (Q5). 
	Thus, 
	all the vertices which $(p, q-1)$ can dominate are dominated by other vertices in $D$. 
	$(p, q-1) \notin D$ 
	because $D$ is minimum. 
	By the above argument, 
	$(p, q-1) \notin \hat{D}$, 
	which implies that 
	$\hat{D}$ satisfies the condition (Q2). 
	Similarly, 
	$(p, q-3),(p+1, q-3),(p+2, q-3) \in D$ by the condition (Q5) 
	and 
	$(p+1, q) \in D$ by the above supposition. 
	Thus, 
	$(p, q-2) \notin D$ 
	because $D$ is minimum. 
	$\hat{D}$ satisfies the condition (Q4). 
	Therefore, 
	if $(p+1, q) \in D$, 
	$D$ satisfies the conditions (Q1),(Q2) and (Q4)  to be $(p', q')$-$(p+1, q)$-regular. 
	If $p = m-1$, 
	then $(m, q-1) \notin D$ 
	because $D$ satisfies the condition (Q1) and is minimum.  
	That is, 
	$D$ satisfies the condition (Q3). 
	If $q = n-1$, 
	then $(p'+1, n), \cdots, (p-1, n) \notin D$ 
	because $D$ satisfies the condition (Q1) and is minimum.  
	That is, 
	$D$ satisfies the condition (Q9). 
	Therefore, 
	regarding $D$ as $\hat{D}$ has completed the $(p', q')$-$(p+1, q)$-regularization. 
	Next, we consider the case in which 
	$(p+1, q) \notin D$. 
	If $p + 2 \leq m$ and $(p+2, q) \in D$, 
	let $P$ be a simple path consisting of $D$ between 
	$(p+2, q)$ and $(p-1, q)$ 
	such that 
	the number of vertices in ${\overline{R}}(D)$ in $P$ is minimum. 
	We will discuss the following three cases: 
	\begin{itemize}
	\itemsep=-2.0pt
	\setlength{\leftskip}{0pt}
		\item[(a)]
			$p+2 \leq m$, 
			$(p+2, q) \in D$ and 
			$P$ contains neither $(p, q)$ nor $(p, q+1)$, 
		\item[(b)]
			$p+2 \leq m$, $(p+2, q) \in D$
			and 
			$P$ contains at least of $(p, q)$ and $(p, q+1)$, and 
		\item[(c)]
			either $p+1 = m$ or 
			both $p + 2 \leq m$ and $(p+2, q) \notin D$. 
	\end{itemize}
	We will show that for each of (a),(b) and (c), 
	$D$ can be regularized into some MCDS which contains $(p+1, q)$. 
	Then, 
	we can show that the MCDS is $(p', q')$-$(p+1, q)$-regular similarly to the proof of the above discussed case, 
	that is, the case of $(p+1, q) \in D$. 
	\noindent
	{\bf (a):}
	By applying Lemma~\ref{LMA:MV}~(iii) 
	with $(p+2, q) \in {\overline{R}}(D)$ and $(p-1, q) \in R(D)$ 
	as $(p-1, q)$ and $(p+2, q)$, respectively, 
	MCDS $D' = D \backslash \{ v \} \cup \{ (p+1, q) \}$, in which 
	$v \in {\overline{R}}(D)$ is a mobile. 
	Thus, 
	$D'$ satisfies the statement of this lemma. 
	\noindent
	{\bf (b):}
	In this case, 
	a vertex which can dominate $(p+1, q-1)$ is as follows: 
	if $q \geq 3$, 
	then $(p+1, q-1)$, $(p+1, q-2)$, $(p, q-1)$ or $(p+2, q-1)$, 
	and 
	if $q = 2$, 
	then $(p+1, q-1)$, $(p, q-1)$ or $(p+2, q-1)$. 
	First, 
	we consider the case in which $(p+1, q-2) \in D$ or $(p, q-1) \in D$ 
	(Fig.~\ref{fig:l4}(2-1),(2-2)). 
	By the conditions (Q1) and (Q6), 
	all the vertices except for $(p+1, q-1)$ which $(p+1, q-2)$ or $(p, q-1)$ can dominate are dominated by regular vertices in $D$. 
	Hence, 
	$D$ contains exactly one of $(p+1, q-2)$ and $(p, q-1)$ 
	because $D$ is minimum. 
	Let $v$ be the vertex contained in $D$ 
	and let us construct $D'$ by removing $v$ from $D$ and adding $(p+1, q)$ to $D$ instead. 
	Then, 
	$D'$ is still dominating. 
	By the conditions (Q1) and (Q6), 
	for any two vertices except for $v$ in $D$, 
	no simple path consisting of vertices in $D$ between the two vertices contains $v$, 
	which implies that 
	$D'$ is connected. 
	Thus, 
	$D'$ is an MCDS. 
	Second, 
	let us consider the case in which 
	$(p+1, q-2) \notin D$ and $(p, q-1) \notin D$
	(Fig.~\ref{fig:l4}(4)). 
	That is, 
	$(p+1, q-1) \in D$ or $(p+2, q-1) \in D$. 
	However, 	
	$(p+2, q-1) \in D$ 
	because $D$ is minimum. 
	By the condition of (b), 
	$P$ contains at least one of $(p, q)$ and $(p, q+1)$. 
	Also, 
	since $(p, q-1) \notin D$, 
	$P$ contains $(p, q+1)$ if $P$ contains $(p, q)$. 
	It implies that $(p, q+1) \in D$. 
	Let us consider the case in which $P$ contains $(p, q+2)$. 
	That is, 
	$(p, q+2) \in D$
	(Fig.~\ref{fig:l4}(6)). 
	Then, let us define 
	$D' = D \backslash  \{ (p, q+1) \} \cup  \{ (p+1, q) \}$
	(Fig.~\ref{fig:l4}(7)). 
	By the condition (Q1) and the fact that $(p, q+2) \in D$, 
	a vertex in $D$ which only $(p, q+1)$ can dominate is $(p+1, q+1)$.  
	$(p+1, q+1)$ dominates the vertex in $D'$. 
	Thus, 
	$D'$ is dominating. 
	We next show that $D'$ is connected. 
	Let us define $C = D \backslash \{ (p, q+1) \}$, 
	and define the vertex set $C_{1}$ of vertices each of which 
	there exists a path consisting of vertices in $C$ from $(p+2, q)$ to. 
 	Also, 
	we define $C_{2} = C \backslash C_{1}$. 
	We now show that $D'$ is connected by showing that 
	for any $v \in C_{1}$ and any $u \in C_{2}$, 
	there exists a path of vertices in $D'$ between $v$ and $u$. 
	If $(p, q) \in C_{1}$, 
	there exists a path of vertices in $C$ different from $P$ between $(p+2, q)$ and $(p, q)$ and all the vertices in $C$ belong to $C_{1}$. 
	Hence, 
	$D'$ is connected. 
	If $(p, q) \in C_{2}$, 
	there exists a path $P'$ of vertices in $C_{2}$ between $(p, q)$ and $u$.
	Then, 
	let us consider the path comprising the following three paths: 
	a path of vertices in $C_{1}$ between $v$ and $(p+2, q)$, 
	the path $(p+2, q)(p+1, q)(p, q)$ and $P'$. 
	This path consists of vertices in $D'$, 
	which implies that 
	$D'$ is connected. 
	Next we consider the case in which 
	$q \leq n-2$ and $P$ does not contain $(p, q+2)$. 
	We assume that $P$ does not contain $(p+1, q+1)$. 
	Since $P$ contains $(p, q+1)$, 
	$P$ also contains $(p-1, q+1)$ and 
	there exists some $x \leq p-1$ 
	such that $P$ contains $(x, q+1)$ and $(x, q+2)$, 
	which contradicts the definition that the number of vertices in ${\overline{R}}(D)$ in $P$ is minimum, 
	and thus $P$ contains $(p+1, q+1)$
	(Fig.~\ref{fig:l4}(8)). 
	In the case of $q = n-1$, 
	we assume again that $P$ does not contain $(p+1, q+1) (=(p+1, n))$. 
	Since $(p'+1, n), \cdots, (p-1, n) \notin D$ by the condition (Q9), 
	$P$ does not contain $(p, q+1) (=(p, n))$, 
	which contradicts the fact discussed above. 
	Hence, 
	if $P$ contains $(p, q+1)$ but does not contain $(p, q+2)$, 
	$P$ contains $(p+1, q+1)$. 
	Then, 
	we consider the case in which
	$(p+1, q-2) \notin D$, 
	$(p, q-1) \notin D$, 
	$P$ does not contain $(p, q+2)$ 
	and 
	$(p+2, q+1) \in D$
	(Fig.~\ref{fig:l4}(9)). 
	If $q + 2 \leq n$ and $p + 3 \leq m$, 
	then we define $D' = D \backslash \{ (p, q+1), (p+2, q+1), (p+2, q-1) \} \cup \{ (p+1, q), (p+1, q+2), (p+3, q) \}$
	(Fig.~\ref{fig:l4}(10-1)). 
	If $q + 2 \leq n$ and $p + 3 > m$ (i.e., $p + 2 = m$), 
	then we define 
	$D' = D \backslash \{ (p, q+1), (p+2, q+1) \} \cup \{ (p+1, q), (p+1, q+2) \}$
	(Fig.~\ref{fig:l4}(10-2)). 
	If $q + 2 > n$ (i.e., $q + 1 = n$), 
	then we define 
	$D' = D \backslash \{ (p, q+1) \} \cup \{ (p+1, q) \}$
	(Fig.~\ref{fig:l4}(10-3)). 
	For each $D'$ in these cases, 
	we can show that $D'$ is dominating and connected 
	similarly to the proof of the case in which $P$ contains $(p, q+2)$. 
	Moreover, 
	if $q = n-1$, 
	then $(p-1, n) \notin D'$ because $D'$ is minimum, 
	which satisfies the condition (Q9). 
	Thus, 
	$D'$ is $(p', q')$-$(p+1, q)$-regular. 
	Finally, 
	we consider the case in which 
	$(p+1, q-2) \notin D$, 
	$(p, q-1) \notin D$, 
	$P$ does not contain $(p, q+2)$ and 
	$(p+2, q+1) \notin D$
	(Fig.~\ref{fig:l4}(11)). 
	Then, 
	$P$ contains $(p+1, q+2)$, 
	that is, 
	$(p+1, q+2) \in D$. 
	Let us define $D' = D \backslash \{ (p, q+1) \} \cup \{ (p+1, q) \}$ 
	and we can show that $D'$ is dominating and connected 
	similarly to the proof of the above case. 
	Thus, 
	$D'$ is $(p', q')$-$(p+1, q)$-regular. 
	\noindent
	{\bf (c):}
	If either $(p+1, q-2) \in D$ or $(p, q-1) \in D$, 
	we can prove the statement in the same way as the proof of the case (b)
	(Fig.~\ref{fig:l4}(15)). 
	We consider the case in which $(p+1, q-2) \notin D$
	and 
	$(p, q-1) \notin D$. 
	Then, 
	$(p+2, q-1) \in D$ in the same way as the case (b)
	(Fig.~\ref{fig:l4}(16)). 
	We consider whether $(p+2, q-2) \in D$ or not
	(Fig.~\ref{fig:l4}(17)).
	If $p+2 = m$, 
	then $(p+3, q-1) \in D$ is impossible. 
	Thus, 
	note that $(p+2, q-2) \in D$ 
	because 
	$(p+2, q-2) \in D$ mush hold 
	so that $(p+2, q-1)$ is connected to any other vertex in $D$. 
	We consider the case of $(p+2, q-2) \in D$
	(Fig.~\ref{fig:l4}(18)). 
	If $p+3 \geq m$, 
	then we define $D' = D \backslash \{ (p+2, q-1), (p+2, q-2) \} \cup \{ (p+1, q), (p+3, q-2) \}$
	(Fig.~\ref{fig:l4}(19-1)). 
	If $p+2 = m$, 
	then we define $D' = D \backslash \{ (p+2, q-1) \} \cup \{ (p+1, q) \}$
	(Fig.~\ref{fig:l4}(19-2)). 
	For each $D'$ in the both cases, 
	we can show that $D'$ is dominating and connected 
	similarly to the proof of the above case. 
	Thus, 
	$D'$ is $(p', q')$-$(p+1, q)$-regular. 
	We consider the case of $(p+2, q-2) \notin D$
	(Fig.~\ref{fig:l4}(20)). 
	$(p+3, q-1) \in D$ holds 
	so that $(p+2, q-1)$ with respect to $D$ is connected to any other vertex. 
	We define $D' = D \backslash \{ (p+2, q-1) \} \cup \{ (p+1, q) \}$
	(Fig.~\ref{fig:l4}(21)). 
	We can show that $D'$ in each of these cases is dominating and connected 
	similarly to the proof of the above case. 
	Thus, 
	$D'$ is $(p', q')$-$(p+1, q)$-regular. 
	We omit the proof of (ii) 
	because it can be proved similarly to the proof of the case (i). 
	\fi
\end{proof}
\ifnum \count12 > 0
\begin{figure*}
	 \begin{center}
	  \includegraphics[width=\linewidth]{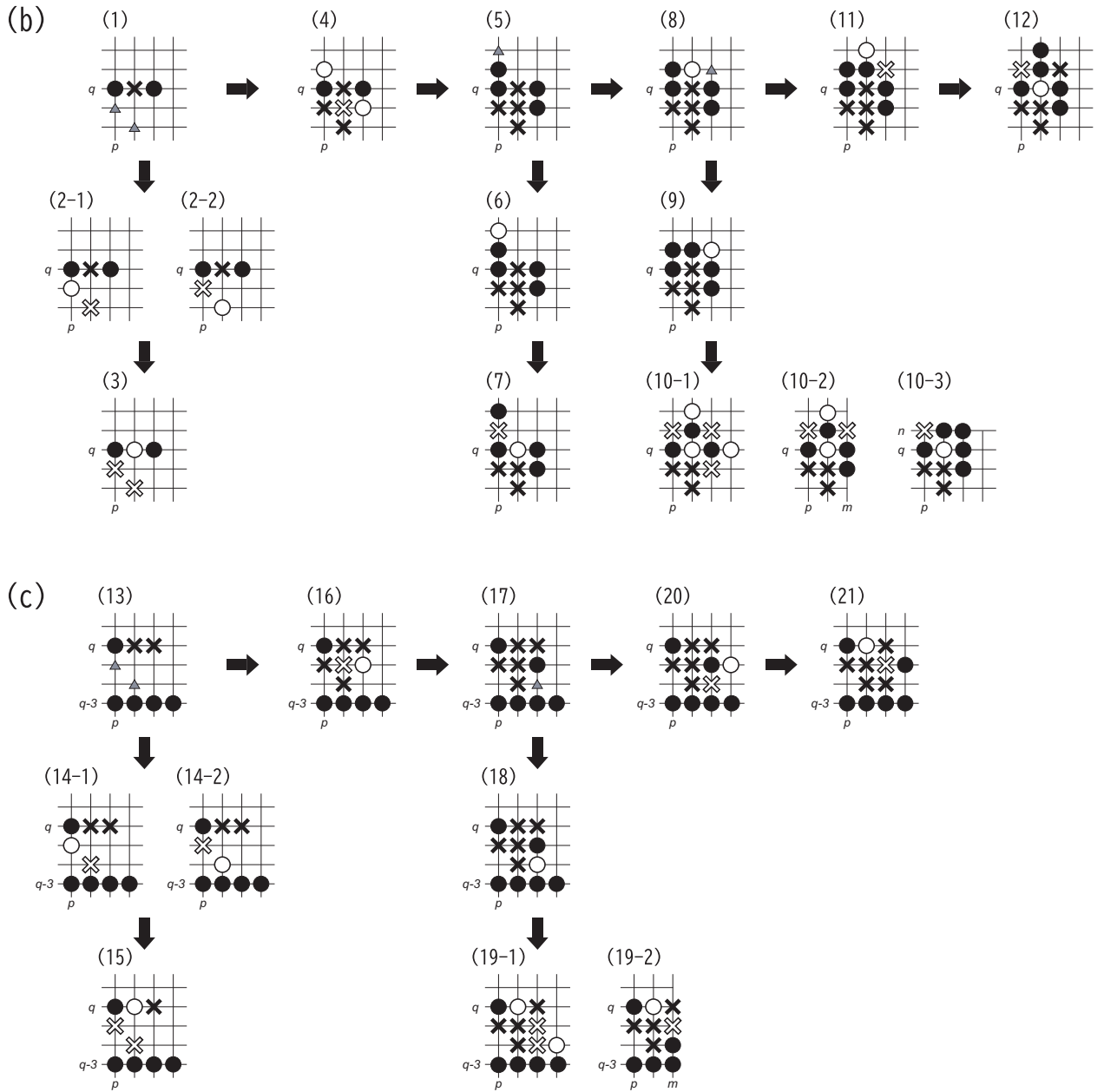}
	 \end{center}
	 \caption{
\ifnum \count10 > 0
\fi
\ifnum \count11 > 0
%
%\com{%%%%%}
%
Figure for Lemma~\ref{LMA:L4}. 
Suppose that $D$ is a $(p', q')$-$(p, q)$-regular MCDS. 
Circles and crosses denote vertices in $D$ and not in $D$, respectively. 
White circles denote vertices in $D$ which are handled at the regularization.
White crosses denote vertices not in $D$ 
or vertices removed from $D$ at the regularization. 
\fi
			}
	\label{fig:l4}
\end{figure*}
\fi
\ifnum \count10 > 0
%
%%\com{%%%%%%}
%

%
\fi
\ifnum \count11 > 0
%
%%%\com{%%%%%}
%

%
\fi
%

%
%%%L6
\begin{LMA}\label{LMA:L6}
	\ifnum \count10 > 0
	%
	%%%%%%%%%%%%%%
	%
	\fi
	\ifnum \count11 > 0
	%
	%\com{%%%%%}
	%
	Suppose that a vertex set $D$ is a $(0, 2)$-$(m, 2)$-regular MCDS. 
	Then, 
	there exists an MCDS which $D$ is $(2, 2)$-$(2, 3)$-regularized into
	(Fig.~\ref{fig:l6}(2)). 
	\fi
\end{LMA}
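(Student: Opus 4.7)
The plan is to use the structural information that $(0,2)$-$(m,2)$-regularity forces upon $D$, reduce what must be verified to two single-vertex conditions, and then do a short case analysis at the corner $(1,3),(2,3)$.

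First I would unpack what $(2,2)$-$(2,3)$-regularity actually demands here. With $p=2$, $q'=2$, $q=3$, and $m\geq 4$, $n\geq 4$, conditions (P3), (P4), (P5), (P8), (P9) are vacuous, and (P6), (P7) reduce to the statement that all of column~$2$ lies in $D'$ and none of column~$1$ does. The $(0,2)$-$(m,2)$-regularity of $D$ already gives $(1,2),\ldots,(m,2)\in D$ by (Q1) and $(1,1),\ldots,(m,1)\notin D$ by (Q2) and (Q3). Since these column-1 and column-2 vertices are regular in $D$ and will remain fixed under any regularization, (P6) and (P7) are automatic. The only remaining obligations on $D'$ are (P1) $(2,3)\in D'$ and (P2) $(1,3)\notin D'$.

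Next I would split on the states of $(1,3)$ and $(2,3)$ in $D$. In the case $(2,3)\in D$ and $(1,3)\notin D$, I would simply take $D'=D$: the required conditions hold and $D$ is already an MCDS, so this trivially counts as a regularization with $U=U'=\emptyset$. In the case $(2,3)\notin D$ and $(1,3)\in D$, Lemma~\ref{LMA:LT}(iii) (applied with $p=0$, $q=2$, since $D$ is $(0,2)$-$(m,2)$-regular) directly produces the desired $(2,2)$-$(2,3)$-regular MCDS $D'$.

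This leaves two sub-cases: both $(1,3),(2,3)\in D$, and both $(1,3),(2,3)\notin D$. In the first, I would argue from the minimality of $D$ that $(1,3)$ can be swapped out: its neighbors $(1,2)$ and $(2,3)$ are already connected through the column-2 chain, so $(1,3)$ is not a cut vertex of $D$; hence its removal can only harm domination, and only at $(1,4)$. Since $(1,4)$ then needs a new dominator, I would locate an irregular vertex to exchange for $(1,3)$ via the mobile machinery of Lemma~\ref{LMA:MV}, applied to a simple path from a regular vertex on column~$2$ out to $(1,4)$ avoiding $(1,3)$. In the second sub-case I would first observe that some vertex adjacent to $(2,3)$ in columns $\geq 3$ must lie in $D$ (otherwise $(2,4)$ would be undominated, since all vertices of column~$1$ and $(1,3),(2,3)$ are excluded), and then use Lemma~\ref{LMA:L2} or \ref{LMA:MV}(iii)/(iv) to locate a mobile $v\in\overline{R}(D)$ on a minimum-irregularity simple path from $(2,2)$ to $(2,3)$'s neighbor; then $D'=D\setminus\{v\}\cup\{(2,3)\}$ is an MCDS meeting (P1) and (P2).

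The main obstacle is the last sub-case. Here we must introduce $(2,3)$ into the MCDS while preserving both domination and connectivity, without being allowed to touch any regular vertex of column~$2$. The key work is to ensure that the path of irregular vertices connecting column~$2$ to its continuation into columns~$\geq 3$ meets the hypotheses of Lemma~\ref{LMA:L2} (minimum number of irregular vertices, and avoidance of the forbidden corner vertices), so that the mobile produced there actually lies in a location whose removal is compensated exactly by placing $(2,3)$. Once that verification is made, the MCDS property of $D'$ follows by the same connectivity argument used in the proof of Lemma~\ref{LMA:MV}.
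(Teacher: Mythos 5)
Your overall strategy is the paper's: reduce the obligation to condition (P1) (getting $(2,3)$ into the set, with (P6),(P2) etc.\ inherited from the $(0,2)$-$(m,2)$-regularity), then case-split around the corner and invoke Lemma~\ref{LMA:LT}(iii) and the mobile machinery of Lemmas~\ref{LMA:L2} and \ref{LMA:MV}. Your first two cases ($D'=D$ when $(2,3)\in D$, and Lemma~\ref{LMA:LT}(iii) when $(1,3)\in D$, $(2,3)\notin D$) coincide with the paper's.

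The gap is in the sub-case $(1,3)\notin D$ and $(2,3)\notin D$. Your claim that ``some vertex adjacent to $(2,3)$ in columns $\geq 3$ must lie in $D$, otherwise $(2,4)$ would be undominated'' is false: $(2,4)$ can be dominated by $(1,4)$, $(3,4)$ or $(2,5)$, none of which is adjacent to $(2,3)$. The mobile lemma you want to invoke, Lemma~\ref{LMA:MV}(ii) with $(x,y)=(2,2)$, requires $(2,4)\in{\overline{R}}(D)$, i.e.\ $(2,4)\in D$ (and (iii)/(iv) do not apply here since their hypotheses $p\leq m-2$ resp.\ $p'=p$ fail for a $(0,2)$-$(m,2)$-regular set). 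So your argument only covers the situation $(2,4)\in D$, which is exactly the paper's third case. The remaining configuration $(1,3),(2,3),(2,4)\notin D$ is the crux of the lemma and is entirely missing from your proposal: the paper shows it cannot occur, by noting that domination of $(1,4)$ together with connectivity forces $(1,5)\in D$, so that $D$ is $(0,5)$-$(1,5)$-regular; applying Lemma~\ref{LMA:L4} $m-1$ times then produces a $(0,5)$-$(m,5)$-regular MCDS whose entire row $y=4$ is excluded by (Q2) and (Q3), disconnecting the CDS --- a contradiction. Without an argument of this kind your case analysis is not exhaustive. (Your separate treatment of the case $(1,3),(2,3)\in D$ is extra work the paper does not need at this stage --- the removal of $(1,3)$ is deferred to the subsequent Case~2 regularization --- and as written it is also too vague to check, but that is a secondary issue.)
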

\begin{proof}
	\ifnum \count10 > 0
	\fi
	\ifnum \count11 > 0
	%
	%\com{%%%%%}
	%
	Suppose that a vertex set $D$ is a $(0, 2)$-$(m, 2)$-regular MCDS. 
	We will construct a vertex set $\hat{D}$ by regularizing $D$. 
	Note that $\hat{D}$ is also an MCDS 
	because $D$ is an MCDS. 
	$(1, 2), \ldots, (m, 2) \in D$ by the condition (Q1) for $D$ to be $(0, 2)$-$(m, 2)$-regular, 
	which are regular vertices and are not removed from $D$ to construct $\hat{D}$. 
	Hence, 
	$(1, 2), \ldots, (m, 2) \in \hat{D}$
	and 
	$\hat{D}$ satisfies the condition (P6) to be $(2, 2)$-$(2, 3)$-regular. 
	Similarly, 
	$(1, 1), \cdots, (m-1, 1) \notin D$ and $(m, 1) \notin D$ 
	by the conditions (Q2) and (Q3) of $D$. 
	Hence, 
	$(1, 1), \ldots, (m, 1) \notin \hat{D}$
	and 
	$\hat{D}$ satisfies the condition (P2). 
	That is, 
	$\hat{D}$ into which $D$ is regularized into satisfies all the conditions except for (P1) to be $(2, 2)$-$(2, 3)$-regular. 
	Now we will show that we can obtain $\hat{D}$ such that $(2, 3) \in \hat{D}$ to satisfy the condition (P1) by the regularization. 
	Also, 
	we will show that $\hat{D}$ is dominating and connected, 
	which leads to the statement is true. 
	If $(2, 3) \in D$ (Fig.~\ref{fig:l6}(2)), 
	then $D$ is $(2, 2)$-$(2, 3)$-regular and 
	$D$ can be regarded as the MCDS $\hat{D}$. 
	In what follows, 
	we consider the case in which $(2, 3) \notin D$
	(Fig.~\ref{fig:l6}(3)). 
	If $(1, 3) \in D$ (Fig.~\ref{fig:l6}(4)), 
	then we can have a $(2, 2)$-$(2, 3)$-regular MCDS from $D$ using Lemma~\ref{LMA:LT}(iii). 
	We next consider the case in which 
	$(1, 3) \notin D$ 
	and 
	$(2, 4) \in D$ 
	(Fig.~\ref{fig:l6}(6)). 
	Since $(2, 2) \in R(D)$ and $(2, 4) \in {\overline{R}}(D)$, 
	there exists an MCDS $D' = D \backslash \{ v' \} \cup \{ (2, 3) \}$, 
	in which $v' \in {\overline{R}}(D)$ is a mobile,  
	by applying Lemma~\ref{LMA:MV}(ii) 
	with $(2, 2)$ and $(2, 4)$ as $(x, y)$ and $(x, y+2)$, respectively. 
	$(2, 3) \in D'$
	and thus, 
	$D'$ is $(2, 2)$-$(2, 3)$-regular. 
	We next consider the case in which 
	$(1, 3) \notin D$ 
	and 
	$(2, 4) \notin D$ 
	(Fig.~\ref{fig:l6}(7)). 
	$D$ contains either $(1, 4)$ or $(1, 5)$ to dominate $(1, 4)$, and 
	$(1, 5) \in D$ holds because $D$ is connected. 
	Then, 
	since $D$ is $(0, 5)$-$(1, 5)$-regular, 
	$D'$ which is $(0, 5)$-$(m, 5)$-regular can be constructed from $D$ 
	by applying Lemma~\ref{LMA:L4} $m-1$ times
	(Fig.~\ref{fig:l6}(8)). 
	Then, 
	$(1, 4), \ldots, (m, 4) \notin D$ by the conditions (Q2) and (Q3), which contradicts that the constructed CDS is connected. 
	Therefore, 
	both 
	$(1, 3) \notin D$ 
	and 
	$(2, 4) \notin D$ 
	do not hold. 
	We have completed the proof. 
	\fi
\end{proof}
\ifnum \count12 > 0
\begin{figure*}
	 \begin{center}
	  \includegraphics[width=\linewidth]{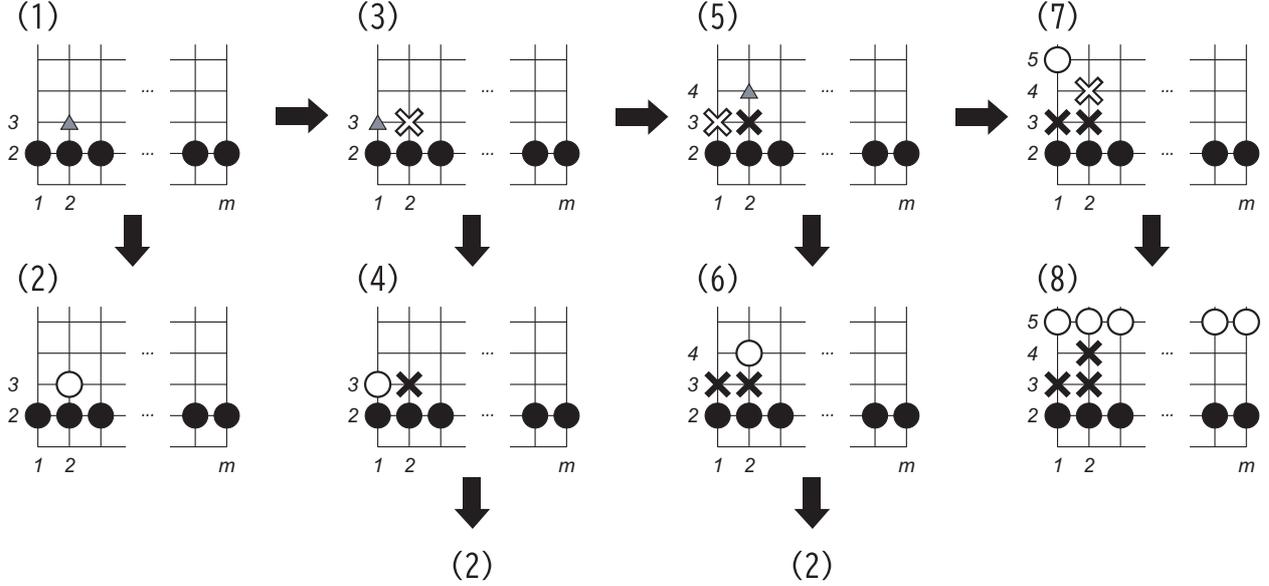}
	 \end{center}
	 \caption{
\ifnum \count10 > 0
\fi
\ifnum \count11 > 0
%
%\com{%%%%%}
%
Figure for Lemma~\ref{LMA:L6}. 
Suppose that $D$ is a $(0, 2)$-$(m, 2)$-regular MCDS. 
Circles and crosses denote vertices in $D$ and not in $D$, respectively. 
\fi
			}
	\label{fig:l6}
\end{figure*}
\fi
\ifnum \count10 > 0
%
%%\com{%%%%%%}
%
%%%%%%%%%%%%%%
%
\fi
\ifnum \count11 > 0
%
%%\com{%%%%%}
%

%
\fi
%

%
%%%L9
\begin{LMA}\label{LMA:L9}
	\ifnum \count10 > 0
	%
	%%%%%%%%%%%%%%
	%
	\fi
	\ifnum \count11 > 0
	%
	%\com{%%%%%}
	%
	For $p \in [2, m-3]$ and $q \in [2, n-3]$, 
	suppose that a vertex set $D$ is an MCDS which is $(p, q)$-$(m, q)$-regular or $(p, q)$-$(p, n)$-regular. 
	Then, 
	there exists an MCDS $D'$ into which $D$ is $(p, q)$-$(\hat{p}, \hat{q})$-regularized 
	such that 
	either $(\hat{p}, \hat{q}) = (p+3, q+1)$ 
	or 
	$(\hat{p}, \hat{q}) = (p+1, q+3)$. 
	\fi
\end{LMA}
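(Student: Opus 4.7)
The plan is to regularize $D$ into a new MCDS on which Lemma~\ref{LMA:LT} applies directly, yielding either a $(p+3,q)$-$(p+3,q+1)$-regular or a $(p,q+3)$-$(p+1,q+3)$-regular MCDS (as needed by Cases~3.2 and 3.3 of the routine). By the symmetry of the two hypotheses on $D$ (rows and columns play interchangeable roles), it suffices to treat the case in which $D$ is $(p,q)$-$(m,q)$-regular. From the regularity conditions, $D$ already contains the entire row $(p+1,q),\ldots,(m,q)$ together with the column $(p,q+1),\ldots,(p,n)$, while the row $(p+1,q-1),\ldots,(m,q-1)$ and the column $(p-1,q-1),\ldots,(p-1,n)$ are absent from $D$. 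Consequently the only undetermined vertices lie in the sub-rectangle with corners $(p+1,q+1)$ and $(m,n)$.

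First I would examine the frontier row $q+1$ and the frontier column $p+1$ inside this sub-rectangle. Since $D$ is minimum and must stay connected through the L-shape of regular vertices, a short case analysis on which of $(p+2,q+1)$, $(p+1,q+2)$, $(p+2,q+2)$ lie in $D$ produces one of the two configurations needed by Lemma~\ref{LMA:LT}: either (a) $(p+2,q+1)\in D$ with $(p+3,q+1)\notin D$, which by Lemma~\ref{LMA:LT}(i) yields a $(p+3,q)$-$(p+3,q+1)$-regular MCDS; or (b) $(p+1,q+2)\in D$ with $(p+1,q+3)\notin D$, which by Lemma~\ref{LMA:LT}(ii) yields a $(p,q+3)$-$(p+1,q+3)$-regular MCDS. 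In either case the lemma's conclusion is met.

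The main obstacle is the residual case in which neither configuration (a) nor (b) is present in $D$ as given, for instance when both $(p+2,q+1)$ and $(p+1,q+2)$ are absent from $D$ and the domination of the frontier cells is routed through further vertices such as $(p+2,q+2)$ or further into the sub-rectangle. In this situation I would invoke Lemma~\ref{LMA:MV}(iii) or (iv), locating a mobile via Lemma~\ref{LMA:L2} along a path from a regular anchor (e.g.\ $(p-1,q+1)$ or $(p+1,q-1)$) to an irregular target, and swap that mobile for the desired frontier vertex $(p+2,q+1)$ or $(p+1,q+2)$; this produces one of configurations (a) or (b) without changing $|D|$ and without disturbing the $(p,q)$-$(m,q)$-regularity on the already-regular portion. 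The delicate verification here is that such a mobile always exists along the required path and that the resulting vertex set remains dominating and connected, which will rely on the structural consequences of the regularity conditions (Q1)--(Q9) much as in the proof of Lemma~\ref{LMA:L4}. Once configuration (a) or (b) is achieved, Lemma~\ref{LMA:LT} finishes the argument, and the symmetric hypothesis $(p,q)$-$(p,n)$-regular is handled by interchanging rows and columns.
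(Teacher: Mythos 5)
Your overall strategy matches the paper's: reduce to the $(p,q)$-$(m,q)$-regular case by symmetry, observe that the regularity conditions force the L-shape (row $(p+1,q),\ldots,(m,q)$ and column $(p,q),\ldots,(p,n)$ in $D$, with the adjacent row and column excluded), check that all the conditions of the target regularity except (Q1)/(P1) carry over, and then case on the frontier vertices. Your configurations (a) and (b) and the appeal to Lemma~\ref{LMA:LT}(i)/(ii) are exactly the paper's middle cases (the paper also dispatches the trivial case $(p+3,q+1)\in D$ or $(p+1,q+3)\in D$ first, which your setup covers implicitly).

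The genuine gap is in your residual case, where $(p+2,q+1)\notin D$ and $(p+1,q+2)\notin D$. First, Lemma~\ref{LMA:MV}(iii) and (iv) do not apply here: their hypotheses require the regular segment to end strictly before the boundary ($q'=q$ and $p\le m-2$, or $p'=p$ and $q\le n-2$), whereas in this lemma the endpoint is $(m,q)$ or $(p,n)$; the applicable parts are Lemma~\ref{LMA:MV}(i) and (ii), whose hypothesis is precisely ``$q'=q$ and $p=m$ or $p'=p$ and $q=n$.'' Second, your proposed anchors $(p-1,q+1)$ and $(p+1,q-1)$ are vertices that the regularity conditions (Q7) and (Q2) guarantee are \emph{not} in $D$, so no path in $D$ can start from them. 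Third, and most substantively, you are missing the structural observation that makes the residual case work: since $(p+2,q+2)$ must be dominated and $D$ is connected while $(p+2,q+1),(p+1,q+2)\notin D$, one of $(p+3,q+2)$ or $(p+2,q+3)$ must lie in $D$. The paper then applies Lemma~\ref{LMA:MV}(ii) with anchor $(p+3,q)\in R(D)$ and irregular target $(p+3,q+2)$ (respectively Lemma~\ref{LMA:MV}(i) with $(p,q+3)$ and $(p+2,q+3)$) to insert $(p+3,q+1)$ (respectively $(p+1,q+3)$) directly, finishing without any further appeal to Lemma~\ref{LMA:LT}. Your plan of instead inserting $(p+2,q+1)$ or $(p+1,q+2)$ to recreate configuration (a) or (b) would require an irregular vertex at $(p+2,q+2)$ or a regular one at $(p+1,q+1)$, neither of which is guaranteed (indeed $(p+1,q+1)\notin D$ by minimality), so that route does not close as described.
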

\begin{proof}
	\ifnum \count10 > 0
	%
	%%%%%%%%%%%%%%
	%
	\fi
	\ifnum \count11 > 0
	%
	%\com{%%%%%}
	%
	The proof of this lemma is similar to that of Lemma~\ref{LMA:L6}. 
	First, for $p \in [2, m-3]$ and $q \in [2, n-3]$, 
	suppose that a vertex set $D$ is an $(p, q)$-$(m, q)$-regular MCDS. 
	We will construct a vertex set $\hat{D}$ by regularizing $D$. 
	Note that $\hat{D}$ is also an MCDS 
	because $D$ is an MCDS. 
	By the conditions (Q1) and (Q6) for $D$ to be $(p, q)$-$(m, q)$-regular, 
	$(p+1, q), \ldots, (m, q) \in D$
	and 
	$(p, q), \ldots, (p, n) \in D$, respectively. 
	Since these vertices are regular, 
	they are not removed from $D$ when $D$ is regularized into $\hat{D}$. 
	Thus, 
	$(p+1, q), \ldots, (m, q) \in \hat{D}$
	and 
	$(p, q), \ldots, (p, n) \in \hat{D}$, 
	which implies that 
	$\hat{D}$ satisfies the conditions (Q5), (P5), (Q6) and (P6) to be $(p, q)$-$(\hat{p}, \hat{q})$-regular 
	such that 
	$(\hat{p}, \hat{q}) = (p+3, q+1)$ 
	or 
	$(\hat{p}, \hat{q}) = (p+1, q+3)$. 
	Similarly, 
	by the conditions (Q7) and (Q2), 
	$(p-1, q+1), \ldots, (p-1, n) \notin \hat{D}$
	and 
	$(p+1, q-1), \ldots, (m, q-1) \notin \hat{D}$, respectively. 
	Thus, 
	$\hat{D}$ satisfies the conditions (Q7), (P7), (Q8) and (P8) to be $(p, q)$-$(\hat{p}, \hat{q})$-regular. 
	Since $D$ is minimum, 
	$(p+1, q+1) \notin D$, that is, $(p+1, q+1) \notin \hat{D}$. 
	Hence, 	
	$\hat{D}$ satisfies the conditions (Q4) and (P4) to be $(p, q)$-$(\hat{p}, \hat{q})$-regular. 
	By the above argument,	
	$\hat{D}$ into which $D$ is regularized into satisfies all the conditions except for (Q1) and (P1).
	Now we will show that 
	we can obtain $\hat{D}$ such that $(p+3, q+1) \in \hat{D}$ ($(p+1, q+3) \in \hat{D}$, respectively) to satisfy the condition (Q1) ((P1), respectively) by regularizing $D$. 
	If $(p+3, q+1) \in D$ or $(p+1, q+3) \in D$, 
	then $D$ can be regarded as the MCDS $\hat{D}$
	(see Fig.~\ref{fig:l9}(2-1),(2-2)). 
	In what follows, 
	we consider the case in which 
	$(p+3, q+1) \notin D$ 
	and 
	$(p+1, q+3) \notin D$ 
	(Fig.~\ref{fig:l9}(3-1),(3-2)). 
	If $(p+2, q+1) \in D$, 
	then $D$ can be $(p, q)$-$(p+3, q+1)$-regularized into an MCDS $D'$ 
	by applying Lemma~\ref{LMA:LT}(i) with $p$ and $q$ as $p$ and $q$ in its statement, respectively
	(Fig.~\ref{fig:l9}(4-1)). 
	Also, 
	if $(p+1, q+2) \in D$, 
	$D$ can be $(p, q)$-$(p+1, q+3)$-regularized into an MCDS $D'$ by applying Lemma~\ref{LMA:LT}(ii) with $p$ and $q$ as $p$ and $q$, respectively
	(Fig.~\ref{fig:l9}(4-2)). 
	Hence, 
	$D'$ can be regarded as $\hat{D}$. 
	Second, 
	we discuss the case in which 
	$(p+2, q+1) \notin D$ 
	and 
	$(p+1, q+2) \notin D$. 
	Since $D$ contains vertices to dominate $(p+2, q+2)$ and is connected, 
	either $(p+3, q+2) \in D$ or $(p+2, q+3) \in D$
	(Fig.~\ref{fig:l9}(5-1),(5-2)). 
	If $(p+3, q+2) \in D$, 
	then 
	there exists an MCDS $D' = D' \backslash \{ v' \} \cup \{ (p+3, q+1) \}$, 
	in which $v' \in {\overline{R}}(D)$ is a mobile, 
	by applying Lemma~\ref{LMA:MV} (ii) with $(p+3, q) \in R(D)$ and $(p+3, q+2) \in {\overline{R}}(D)$ as $(x, y)$ and $(x, y+2)$, respectively.  
	Since $(p+3, q+1) \in D'$, $D'$ is $(p+3, q)$-$(p+3, q+1)$-regular, 
	which implies that the statement is true. 
	Similarly, 
	if $(p+2, q+3) \in D$, 
	there exists an MCDS $D' = D' \backslash \{ v' \} \cup \{ (p+1, q+3) \}$, 
	in which $v' \in {\overline{R}}(D)$ is a mobile, 
	by applying Lemma~\ref{LMA:MV} (i) with $(p, q+3) \in R(D)$ and $(p+2, q+3) \in {\overline{R}}(D)$ as $(x, y)$ and $(x+2, y)$, respectively.  
	Since $(p+1, q+3) \in D'$, $D'$ is $(p, q+3)$-$(p+1, q+3)$-regular, 
	which implies that the statement is true. 
	We omit the proof of the case in which $D$ is $(p, q)$-$(p, n)$-regular 
	because this case is symmetric to the case discussed above. 
	\fi
\end{proof}
\ifnum \count12 > 0
\begin{figure*}
	 \begin{center}
	  \includegraphics[width=\linewidth]{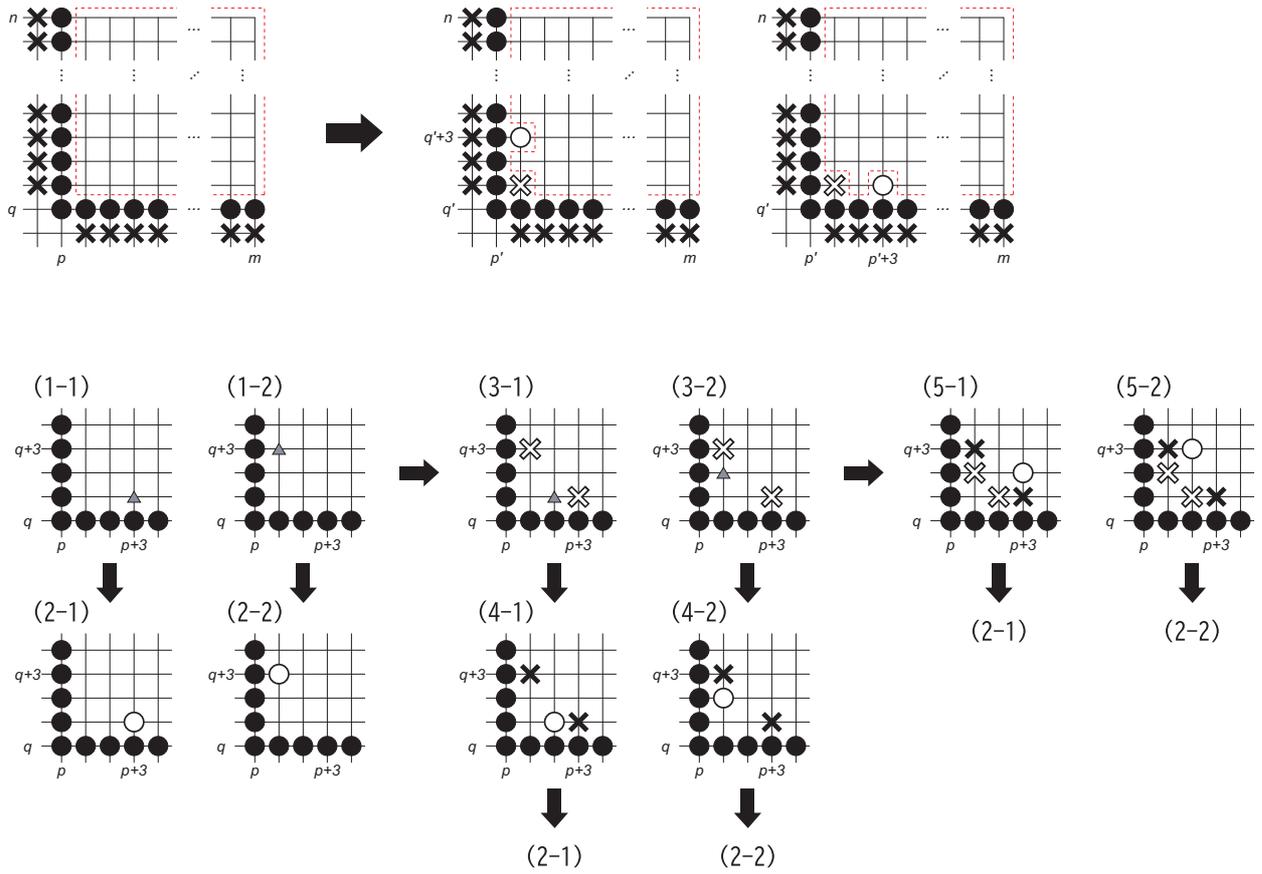}
	 \end{center}
	 \caption{
\ifnum \count10 > 0
%
%%%%%%%%%%%%%%%
%
\fi
\ifnum \count11 > 0
%
%\com{%%%%%}
%
The top figure illustrates the situation of the statement of Lemma~\ref{LMA:L9}.
Suppose that $D$ is an MCDS which is $(p, q)$-$(m, q)$-regular or $(p, q)$-$(p, n)$-regular. 
Circles and crosses denote vertices in $D$ and not in $D$, respectively. 
White circles denote vertices in $D$ which are handled at the regularization.
White crosses denote vertices not in $D$ 
or vertices removed from $D$ at the regularization. 
\fi
			}
	\label{fig:l9}
\end{figure*}
\fi
\ifnum \count10 > 0
%
%%\com{%%%%%%}
%
%%%%%%%%%%%%%%
%
\fi
\ifnum \count11 > 0
%
%%\com{%%%%%}
%

%
\fi
%

%
%%%CR
\begin{LMA}\label{LMA:C9}
	\ifnum \count10 > 0
	\fi
	\ifnum \count11 > 0
	%
	%\com{%%%%%}
	%
	For $p \geq 2$ and $q \geq 2$, 
	suppose that a vertex set $D$ is an MCDS which is $(p, q)$-$(m, q)$-regular or $(p, q)$-$(p, n)$-regular. 
	Then, 
	the following properties hold: 
	\begin{itemize}
		\itemsep=-2.0pt
		\setlength{\leftskip}{0pt}
		\item[(i)]
			if $p \leq m-4$
			and 
			$q \leq m-4$, 
			then there exists an MCDS $D'$ 
			into which $D$ is $(p+3, q)$-$(p+3, q+1)$-regularized or $(p, q+3)$-$(p+1, q+3)$-regularized, 
		\item[(ii)]
			if $p \leq m-4$ and $q \in \{ n-3, n-2 \}$, 
			then there exists an MCDS $D'$ into which $D$ is $(p+3, n-2)$-$(p+3, n-1)$-regularized, 
			and 
		\item[(iii)]
			if $q \leq n-4$ 
			and 
			$p \in \{ m-3, m-2 \}$, 
			then there exists an MCDS $D'$ into which $D$ is $(m-2, q+3)$-$(m-1, q+3)$-regularized. 
	\end{itemize}
	\fi
\end{LMA}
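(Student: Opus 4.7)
The plan is to dispose of the three parts separately, invoking Lemma~\ref{LMA:L9} for part~(i) and adapting its case analysis for the boundary parts (ii) and (iii).

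Part~(i) follows immediately from Lemma~\ref{LMA:L9}: combined with the implicit $p \geq 2$ and $q \geq 2$, the hypothesis $p \leq m-4$ and $q \leq n-4$ is strictly stronger than the hypothesis $p \in [2, m-3]$, $q \in [2, n-3]$ of Lemma~\ref{LMA:L9}, so the latter supplies an MCDS $D'$ regularized into either $(p+3, q)$-$(p+3, q+1)$ or $(p, q+3)$-$(p+1, q+3)$, as required.

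For part~(ii), the key observation is that when $q \in \{n-3, n-2\}$ the ``advance in the $q$ direction'' branch of Lemma~\ref{LMA:L9}'s proof is unavailable: pushing $q$ up by 3 would either leave the grid or demand a vertex at column $n+1$. I would therefore mirror L9's case analysis but restrict attention to the ``advance in the $p$ direction'' branch, driving the regularization to the target $(p+3, n-2)$-$(p+3, n-1)$. In the subcase $q = n-2$, the row at $y = n-2$ is already present in $D$ by the initial regularity, so only the vertex $(p+3, n-1)$ must be placed: if $(p+3, n-1) \in D$ we are done; if instead $(p+2, n-1) \in D$, Lemma~\ref{LMA:LT}(i) applies directly; otherwise the dominance and connectivity of $(p+2, n-1)$ together with the boundary at column $n$ force $(p+3, n) \in D$, and Lemma~\ref{LMA:MV}(ii) (with $(p+3,n-2)$ and $(p+3,n)$) yields the required shift. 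In the subcase $q = n-3$, I would first apply the same mobile-vertex techniques to establish the new row at $y = n-2$, treating $(p+2, n-2)$ versus $(p+3, n-2)$ analogously to L9's dichotomy $(p+2, q+1)$ versus $(p+3, q+1)$, and then handle $(p+3, n-1)$ as in the $q = n-2$ argument, verifying throughout that connectivity and dominance are preserved.

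Part~(iii) is proved by symmetry, exchanging the roles of the two coordinates (and of $m$ with $n$): the target $(p+3, n-2)$-$(p+3, n-1)$ of part~(ii) becomes $(m-2, q+3)$-$(m-1, q+3)$, and the subcases $p = m-2$, $p = m-3$ play the roles of $q = n-2$, $q = n-3$ respectively, so the entire case analysis transfers verbatim.

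The main obstacle is the $q = n-3$ subcase of~(ii), together with its mirror image $p = m-3$ in~(iii). Here two coordinated modifications must be executed in sequence — first establishing the new regular row at $y = n-2$, then placing the connector-like vertex at $(p+3, n-1)$ — while the boundary constraint forbids any advance beyond column $n$. Checking that both shifts can be carried out consistently, and that the intermediate CDS after the first shift still admits the mobile needed for the second, is the most delicate part of the argument.
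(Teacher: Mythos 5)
Your part (i) and your overall architecture (reduce to Lemma~\ref{LMA:L9}, handle the boundary cases by an adapted case analysis, get (iii) by symmetry) agree with the paper. The problems are in part (ii). A smaller one first: in the subcase $q = n-2$, your claim that once $(p+3,n-1) \notin D$ and $(p+2,n-1) \notin D$ the vertex $(p+3,n)$ is forced into $D$ is false as stated, because $(p+2,n)$ may instead be dominated by $(p+1,n)$; the paper treats $(p+1,n)\in D$ as a separate subcase and disposes of it by exchanging $(p+1,n)$ for $(p+2,n-1)$, after which Lemma~\ref{LMA:LT}(i) applies. The more serious issue is your premise for the subcase $q=n-3$: the ``advance in the $q$ direction'' branch of Lemma~\ref{LMA:L9} is \emph{not} unavailable there, since $q+3=n$ is still inside the grid, so Lemma~\ref{LMA:L9} may legitimately return an MCDS that is $(p,n)$-$(p+1,n)$-regularized, and nothing in your sketch rules this outcome out or converts it into the required $(p+3,\cdot)$ form. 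You cannot simply ``restrict attention to the $p$-direction branch'': the dichotomy inside the proof of Lemma~\ref{LMA:L9} is driven by which of $(p+2,q+1)$, $(p+1,q+2)$, $(p+3,q+2)$, $(p+2,q+3)$ actually lies in $D$, and if only the $q$-direction witnesses are present the other branch is not an option. The paper closes exactly this gap constructively: it accepts the $(p,n)$-$(p+1,n)$-regular outcome, extends it to a $(p,n)$-$(p+3,n)$-regular MCDS by applying Lemma~\ref{LMA:L4} twice, and then exchanges the irregular vertices $(p+1,n),(p+2,n)$ for $(p+3,n-2),(p+3,n-1)$, verifying domination and connectivity of the resulting set directly. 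Your proposal contains no analogue of this conversion step, which is precisely the ``most delicate part'' you flag but do not resolve.
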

\begin{proof}
	\ifnum \count10 > 0
	\fi
	\ifnum \count11 > 0
	%
	%\com{%%%%%}
	%
	For $p \geq 2$ and $q \geq 2$, 
	suppose that a vertex set $D$ is an MCDS which is $(p, q)$-$(m, q)$-regular or $(p, q)$-$(p, n)$-regular. 
	\noindent
	{\bf (i):}
	If $p \leq m-4$
	and 
	$q \leq n-4$, 
	then there exists an MCDS $D'$ into which 
	$D$ is $(p+3, q)$-$(p+3, q+1)$-regularized or $(p, q+3)$-$(p+1, q+3)$-regularized 
	by Lemma~\ref{LMA:L9}, 
	which satisfies the statement of this lemma. 
	\noindent
	{\bf (ii):}
	If $q = n-3$, 
	then there exists an MCDS $D'$ into which 
	$D$ is either $(p+3, q)$-$(p+3, q+1)$-regularized, 
	that is, 
	$(p+3, n-3)$-$(p+3, n-2)$-regularized or 
	$(p, q+3)$-$(p+1, q+3)$-regularized, 
	that is, 
	$(p, n)$-$(p+1, n)$-regularized. 
	Hence, 
	if $D'$ is $(p+3, n-3)$-$(p+3, n-2)$-regular, then the statement is true 
	(Fig.~\ref{fig:c9}(2)). 
	Then, 
	we discuss the case in which $D'$ is $(p, n)$-$(p+1, n)$-regular
	(Fig.~\ref{fig:c9}(3)), 
	that is, 
	the case in which $(p+1, n) \in D'$. 
	In this case, 
	$D'$ satisfies all the conditions except for (Q1) similarly to the proof of Lemma~\ref{LMA:L9}. 
	Thus, 
	we prove that there exists an MCDS into which $D'$ is regularized and satisfies (P1), that is, 
	which contains $(p+3, n-2)$. 
	We can obtain a $(p, n)$-$(p+3, n)$-regular MCDS $D''$ from ~\ref{LMA:L4} by applying Lemma~\ref{LMA:L4} to ~\ref{LMA:L4} twice 	
	(Fig.~\ref{fig:c9}(4)). 
	Since $(p+1, n), (p+2, n), (p+3, n) \in D''$, 
	we define 
	$D''' = D'' \backslash \{ (p+1, n), (p+2, n) \} \cup \{ (p+3, n-2), (p+3, n-1) \}$. 
	Since $D'''$ is still dominating and connected by Fig.~\ref{fig:c9}(5), 
	$D'''$ is an MCDS. 
	Moreover, 
	since $(p+1, n), (p+2, n), (p+3, n-2)$ and $(p+3, n-1)$ are irregular vertices in $D'$ and $D$,  
	$D$ can be $(p+3, n-3)$-$(p+3, n-2)$-regularized into $D'''$. 
	We consider the case in which $q = n-2$. 
	If $(p+3, n-1) \in D$, 
	the statement is clearly true
	(Fig.~\ref{fig:c9}(7)). 
	If $(p+3, n-1) \notin D$
	and 
	$(p+2, n-1) \in D$
	(Fig.~\ref{fig:c9}(9)),  
	then the statement is true 
	because $D$ can be $(p+3, n-2)$-$(p+3, n-1)$-regularized into $D'$ by Lemma~\ref{LMA:LT}(i). 
	If $(p+3, n-1) \notin D$, 
	$(p+2, n-1) \notin D$
	and 
	$(p+1, n) \in D$ 
	(Fig.~\ref{fig:c9}(11)),  
	we define $D' = D \backslash \{ (p+1, n) \} \cup \{ (p+2, n-1) \}$. 
	$D'$ is dominating and connected by the figure. 
	Thus, 
	$(p+3, n-1) \notin D'$
	and 
	$(p+2, n-1) \in D'$, 
	which implies that the statement is true 
	because this situation is the same as the previously discussed case. 
	Finally, 
	we consider the case in which 
	$(p+3, n-1) \notin D$, 
	$(p+2, n-1) \notin D$
	and 
	$(p+1, n) \notin D$. 
	Since $D$ contains vertices to dominate $(p+2, n)$ and is connected, 
	$(p+3, n) \in D$
	(Fig.~\ref{fig:c9}(12)). 
	There exists an MCDS $D' = D \backslash \{ v' \} \cup \{ (p+3, n-1) \}$, 
	in which $v' \in {\overline{R}}(D)$ is a mobile in $D$, 
	by applying Lemma~\ref{LMA:MV}~(ii) with $(p+3, n-2) \in R(D)$ and $(p+3, n) \in {\overline{R}}(D)$ as $(x, y)$ and $(x+2, y)$, respectively. 
	$D'$ is $(p+3, n-2)$-$(p+3, n-1)$-regular 
	because $(p+3, n-1) \in D'$. 
	\noindent
	{\bf (iii):}
	We omit the proof of this case 
	because we can prove it similarly to that of the case (ii). 
	\fi
\end{proof}
\ifnum \count12 > 0
\begin{figure*}
	 \begin{center}
	  \includegraphics[width=\linewidth]{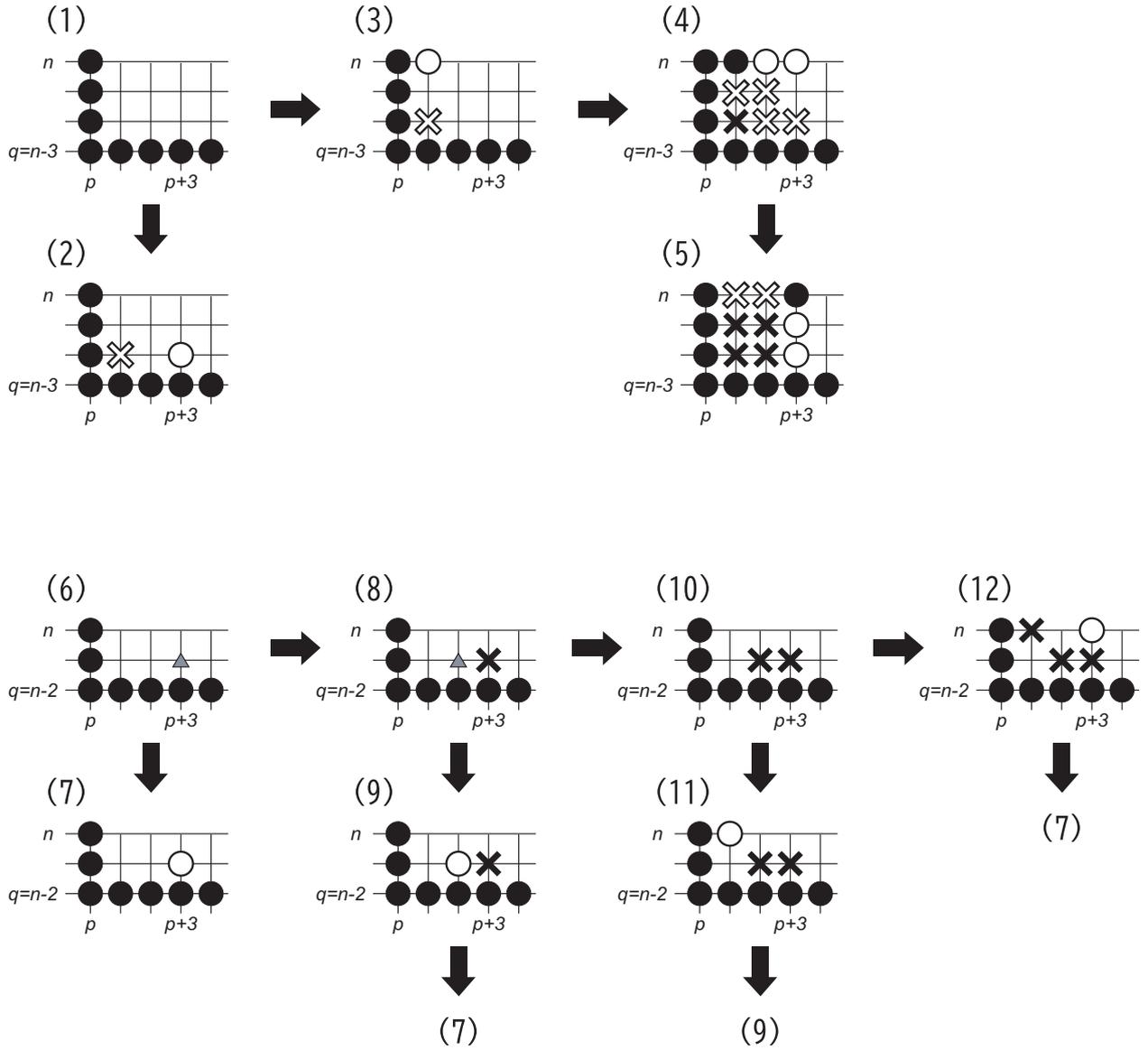}
	 \end{center}
	 \caption{
\ifnum \count10 > 0
%
%%%%%%%%%%%%%%%
%
\fi
\ifnum \count11 > 0
%
%\com{%%%%%}
%
Lemma~\ref{LMA:C9}. 
Suppose that $D$ is an MCDS which is $(p, q)$-$(m, q)$-regular or $(p, q)$-$(p, n)$-regular. 
Circles and crosses denote vertices in $D$ and not in $D$, respectively. 
White circles denote vertices in $D$ which are handled at the regularization.
White crosses denote vertices not in $D$ 
or vertices removed from $D$ at the regularization.
\fi
			}
	\label{fig:c9}
\end{figure*}
\fi
%

%\newpage

%
%%%%%%%%%%%%%%%%%%%%%%%%%%%%%%%%%%%%%%%%
%%\section{Conclusions} \label{sec:conc}
%
\ifnum \count10 > 0
%
%%\com{%%%%%%}
%

%
\fi
\ifnum \count11 > 0
%
%%%\com{%%%%%}
%

%
\fi
%

%

%%aa
%%\cite{TF2003,SSW2012}

\bibliography{cdsg}
\bibliographystyle{plain}

%\newpage

\appendix

%%%%%%%%%%%%%%%%%%%%%%%%%%%%%%%%%%%%%%%%%%%%%%%%%%%%%%%%%%%%%%%%%%%%%%%%%%%%%%%%%%%%%%%%%%%%%%%%%%%%%%%%%
%%\section{Execution Example of $CP$} \label{ap.sec:stage_example}
%
\ifnum \count10 > 0
%
%%\com{%%%%%%}
%

%
\fi
\ifnum \count11 > 0
%
%%\com{%%%%%}
%

%
\fi
%

%

%%%%%%%%%%%%%%%%%%%%%%%%%%%%%%%%%%%%%%%%%%%%%%%%%%%%%%%%%%%%%%%%%%%%%%%%%%%%%%%%%%%%%%%%%%%%%%%%%%%%%%%%%
%%\section{Proofs of Lemmas} \label{ap.sec:proofs}
%
\ifnum \count10 > 0
%
%%\com{%%%%%%}
%

%

%
\fi
\ifnum \count11 > 0
%
%%\com{%%%%%}
%

%
\fi

\end{document}